\RequirePackage[l2tabu, orthodox]{nag} 
\documentclass[11pt, a4paper]{article}

\title{GMRES Methods for Tomographic Reconstruction\\with an Unmatched Back Projector\thanks{We acknowledge financial support 		from Japan Society for the Promotion of Science grant no.~S19008 and JP20K14356, and from The Villum Foundation through Villum Investigator grant no.~25893.}}
\author{Per Christian Hansen\thanks{Department of Applied Mathematics and Computer Science, Technical University of Denmark, \texttt{pcha@dtu.dk}} \and Ken Hayami\thanks{Professor Emeritus, National Institute of Informatics, \texttt{hayami@nii.ac.jp}} \and Keiichi Morikuni\thanks{Faculty of Engineering, Information and Systems, University of Tsukuba, Japan, \texttt{morikuni@cs.tsukuba.ac.jp}}}
\date{}

\usepackage{amsmath, amsfonts, amssymb, amsthm, multirow}
\usepackage[all, warning]{onlyamsmath}
\usepackage[bookmarks=true, bookmarksnumbered=true, bookmarkstype=toc, colorlinks={true}, 	pdfauthor={Keiichi Morikuni}, pdfdisplaydoctitle={true}, pdfkeywords={Add keywords}, pdfpagemode=UseNone, pdfstartview=FitH, pdfsubject={subtitle}, pdftitle={Report}, setpagesize={false},	urlcolor={red}]{hyperref}
\usepackage[T1]{fontenc} 
\usepackage[top=1in, bottom=1in, left=1in, right=1in]{geometry}
\usepackage{exscale, fix-cm, lmodern, textcomp}
\usepackage{algorithm, algorithmic}
\usepackage{empheq}
\usepackage{booktabs}

\usepackage[caption=false]{subfig}

\usepackage{chngcntr}
\usepackage{apptools}
\AtAppendix{\counterwithin{lemma}{section}}
\newtheorem{lemma}{Lemma}

\usepackage{mathtools}
\mathtoolsset{showonlyrefs=true}

\hypersetup{pdfpagemode=UseNone} 

\theoremstyle{plain}
\newtheorem{theorem}{Theorem}[section]
\newtheorem{corollary}[theorem]{Corollary}

\numberwithin{equation}{section}
\makeatletter
	
	\@addtoreset{equation}{section}
\makeatother

\makeatletter
  
  \@addtoreset{algorithm}{section}
\makeatother

\makeatletter
  
  \@addtoreset{figure}{section}
\makeatother

\makeatletter
  
  \@addtoreset{table}{section}
\makeatother

\renewcommand{\arraystretch}{0.94}
\setlength\belowcaptionskip{12pt}

\newcommand{\trans}{{\mathsf{T}}}
\newcommand{\be}{\varepsilon}
\newcommand{\nul}{{\cal N}}

\def\AB{A\hspace{1.9pt}B}
\def\BA{B\hspace{1.9pt}A}
\def\Bb{B\hspace{1.9pt}b}
\def\range{\mathcal{R}}
\def\As{A_{\mathtt{s}}^{}}
\def\Al{A_{\mathtt{l}}^{}}
\def\Ai{A_{\mathtt{i}}^{}}
\def\AsT{A_{\mathtt{s}}^\trans}
\def\AlT{A_{\mathtt{l}}^\trans}
\def\AiT{A_{\mathtt{i}}^\trans}
\def\fro{_\mathsf{F}}

\allowdisplaybreaks

\begin{document}
\maketitle

\begin{center}
\emph{P. C. Hansen and K. Morikuni dedicate this paper to K. Hayami.}
\end{center}

\begin{abstract}
Unmatched pairs of forward and back projectors are common in X-ray CT computations
for large-scale problems; they are caused by
the need for fast algorithms that best utilize the computer hardware, and it
is an interesting and challenging task to develop fast and easy-to-use
algorithms for these cases.
Our approach is to use preconditioned GMRES, in the form of the
AB- and BA-GMRES algorithms, to handle the unmatched normal equations
associated with an unmatched pair.
These algorithms are simple to implement, they rely only on computations
with the available forward and back projectors, and they do not require the
tuning of any algorithm parameters.
We show that these algorithms are equivalent to well-known LSQR and
LSMR algorithms in the case of a matched projector.
Our numerical experiments demonstrate that AB- and BA-GMRES exhibit
a desired semi-convergence behavior that is comparable with
LSQR/LSMR and that standard stopping rules work well.
Hence, AB- and BA-GMRES are suited for
large-scale CT reconstruction problems with noisy data and
unmatched projector pairs.
\end{abstract}

\textbf{Keywords}: CT-reconstruction, regularizing iterations, AB-GMRES, BA-GMRES, semi-\break convergence, unmatched backprojector, unmatched normal equations, stopping rules

\section{Introduction}
\label{sec:Intro}

Computational algorithms for X-ray computed tomography (CT) come in different forms.
Some methods take their basis in an explicit formulation of the inverse operator,
leading to the filtered back projection algorithm (for parallel-beam CT),
the FDK algorithm (for cone-beam CT), etc.
Other methods are based on a discretization
of the problem followed by solving the -- usually over- or underdetermined --
linear system of equations by means of an iterative method.
The latter approach, which is the basis for this work, is more general in the sense
that it does not assume any specific scanning geometry, and it tends to produce
better reconstructions in the case of limited-data and/or limited-angle problems
\cite{SIAMbook}.

CT reconstruction is an inverse problem where the forward problem refers to the
mapping of the object's linear attenuation coefficient to the data in the form of
projections of the object onto the detector planes for various scan positions.
In the case of 2D parallel-beam CT the forward operator is known as the Radon transform.
Data consists of (noisy) measurements of the attenuation of the X-rays
through the object, recorded in a set of detector elements.
Discretization of this problem takes the form
\begin{equation}
	\label{eq:Axb}
	A\, x \approx b \ , \qquad b = \bar{b} + e \ , \qquad \bar{b} = A\,\bar{x} \ ,
	\qquad A \in \mathbb{R}^{m\times n} \ ,
\end{equation}
where $A$ is a discretization of the \emph{forward projector},
$b$ is the measured data, and $x$ represents the reconstructed image of
the object's interior.
Moreover, $\bar{x}$ represents the exact object, $\bar{b}$ represents
the noise-free data, and $e$
represents the measurement noise.
A number of discretization schemes are available for computing the matrix $A$,
see, e.g.,
\cite[Chapter~9]{SIAMbook}, \cite{HSSHN} and \cite[Appendix~A]{LaWe04}.
A discussion of the noise in CT can be found in \cite[Section~4.4]{SIAMbook};
here we assume that $e$ is white Gaussian noise.

The matrix $A$ is sparse and there are no restrictions on its dimensions $m$ and $n$;
both overdetermined and underdetermined systems are used, depending on the
measurement setup.
In both cases it is common to consider a least squares solution
which involves the normal equations in one of the forms
\[
A^\trans A\, x = A^\trans b \qquad \Longleftrightarrow \qquad
\min_x \| b - A\, x \|_2
\]
and
\[
A\, A^\trans y = b \ , \quad  x = A^\trans y  \qquad \Longleftrightarrow \qquad
\min_x \| x \|_2 \quad \hbox{subject to} \quad A\, x=b \ .
\]
The matrix $A^\trans$ represents the so-called \emph{back projector} which
maps the data back onto the solution domain~\cite{Natterer}.
The back projector is a mathematical abstraction that has no physical interpretation,
but it plays a central role in filtered back projection and many other
reconstruction algorithms.

In large-scale CT  problems, the matrix $A$ -- in spite of
the fact that it is sparse -- may be too large to store explicitly.
Instead, one must use subroutines or functions that compute the multiplications with $A$
and $A^\trans$ in a matrix-free fashion, often using graphics processing units (GPUs)
or other hardware accelerators.
In order to make best use of this hardware and the memory hierarchy
it is common to use different discretization techniques for the
forward projector and the back projector \cite{PBS11}.
This means that the matrix $B \in \mathbb{R}^{n\times m}$
which represents the back projector is
typically different from the transpose $A^\trans$ of the forward projector,
and we say that $B$ is an \emph{unmatched back projector}
or an \emph{unmatched transpose}.

A consequence of this is that iterative solvers, which are supposed
to solve the normal equations, instead solve
the so-called \emph{unmatched normal equations} in one of the forms
\begin{equation}
	\label{eq:unmatchednormalequations}
	\BA\, x = \Bb \qquad \hbox{and} \qquad A\, B \, y = b \ , \quad
	x = B \, y \ , \qquad B \in \mathbb{R}^{n\times m} \ ,
\end{equation}
see \cite{ElHa18} and \cite{DHHR19} for details.
The main drawback of using an unmatched transpose $B$ is that the standard
Simultaneous Iterative Reconstruction Technique (SIRT) iterative solvers
(Landweber, Cimmino, CAV, DROP, SART)~\cite{AIRtoolsII} do not converge
when the iteration matrix $\BA$
has one or more complex eigenvalues with a negative real part -- which
is very often the case in practice
(see Figure~\ref{fig:eigenvalues} in Section~\ref{sec:eigenvalues}).
A convergence analysis of proximal gradient methods with an unmatched
transpose is given in~\cite{CPRST21}.

As shown in \cite{DHHR19}, one can modify the SIRT methods such that
they solve a slightly modified problem whose iteration matrix has no
eigenvalues with a negative real part, thus ensuring convergence.
But this does introduce a (small) perturbation of the solution, and one
must compute an estimate of the leftmost eigenvalue(s) of $\BA$.
In addition, the choice of a good relaxation parameter is nontrivial.

An alternative is to use Kaczmarz's method \cite{AIRtoolsII} which does not
explicitly involve the matrix transpose; but this method has other drawbacks.
In its original form, Kaczmarz's method operates on a single row of $A$ at
a time, which is unsuited for modern computer architectures.
Block versions \cite{SoHa14} have better
performance but they require a good choice of the blocks, the blocks
may be unmatched, and again the choice of a good relaxation parameter is nontrivial.

The present work explores an alternative approach, where we use the well-known
GMRES algorithm to solve the unmatched normal
equations \eqref{eq:unmatchednormalequations} with an unmatched
back projector.
Thus, we avoid introducing a perturbation as well as the need for eigenvalue
computations for the sake of ensuring convergence of the SIRT methods,
and we avoid the choice of the relaxation parameter.
This makes it easier to develop a black-box CT solver that does not rely
on the user's knowledge of convergence criteria, eigenvalue computations, etc.
Our work is based on previous work on the preconditioned
AB- and BA-GMRES methods for solving least squares problems \cite{HaYI10}
and here we explicitly demonstrate the successful use of these methods for the
CT reconstruction problems.

An unmatched transpose also arises in connection with image
deblurring with anti-reflective boundary conditions.
There, discretizations of the blurring operator and its adjoint
that incorporate the desired boundary conditions lead to
an unmatched pair of matrices, see \S\S 3.3--3.4 in \cite{DoEsMaSe06}.
The corresponding unmatched normal equations are solved by means
of GMRES in \cite{DoMaRe15}, similar to the present work.
In the present paper, the unmatched transpose comes from an adjoint
(the back projector) with a different discretization from the forward operator,
as is common in CT problems.

The main goal of our work is to study the performance and the regularizing effects
of the AB- and BA-GMRES algorithms when applied to CT reconstruction problems.
To do this, we deliberately commit ``inverse crime'' and
generate the noise-free data as $\bar{b} = A\, \bar{x}$,
meaning that the same model
(the matrix $A$) is used to generate and reconstruct the data.
To determine which unmatched pair is preferable, one must use real (or carefully
simulated) data to avoid the inverse crime -- this is a topic for future research.

We remark that an alternative to using the unmatched transpose
is to use Kaczmarz's method as preconditioner
(i.e., for $B$) in the AB- and BA-GRMES algorithms,
as described and analyzed in \cite{MorikuniHayami2015,KaczmarzInner}.
We shall not pursue that approach here, but leave it for future work.

Our paper is organized as follows.
In Section \ref{sec:ABBA} we summarize the AB- and BA-GMRES algorithms, and
in Section \ref{sec:pert} we present new first-order perturbation theory for
the unmatched normal equations.
The behavior of the iterative methods in the presence of noisy data is
discussed in Section \ref{sec:iter}, and in Section \ref{sec:RegProp}
we study the regularizing properties of AB- and BA-GMRES when $B=A^\trans$.
Finally, in Section \ref{sec:NumEx} we present a number of numerical
examples that illustrate our theory and the performance of the AB- and
BA-GMRES algorithms.
We use the following notation: $\range(\cdot)$ denotes the range of a matrix, $\nul(\cdot)$
denotes the null space of a matrix.
For Krylov subspaces we use the notation
\[
\mathcal{K}_k(M,d) = \mathrm{span}\{ d, Md, M^2d, \ldots, M^{k-1}d \} \ .
\]
Moreover, $v|_{\mathcal{S}}$ denotes the orthogonal projection of vector $v$ on
the subspace ${\mathcal{S}}$, and $\sigma_i(A)$ denotes the $i$th singular
value of~$A$.

\section{The AB-GMRES and BA-GMRES Algorithms}
\label{sec:ABBA}

The two algorithms AB-GMRES and BA-GMRES were originally presented
and analyzed in \cite{HaYI10}
as alternative methods for solving linear least squares
problems~$\min_{x \in \mathbb{R}^n} \| b - A x \|_2$
according to the following principles:
\begin{itemize}
	\item
	AB-GMRES solves $\min_{y\in\mathbb{R}^m} \| b - \AB\, y \|_2$, $x = B \, y$
	with $B\in\mathbb{R}^{n\times m}$ as a right preconditioner.
	\item
	BA-GMRES solves $\min_{x\in\mathbb{R}^n} \| \Bb - \BA \, x \|_2$
	with $B\in\mathbb{R}^{n\times m}$ as a left preconditioner.
\end{itemize}
Here we briefly summarize these algorithms
using the notation $H_k = (h_{ij}) \in \mathbb{R}^{(k+1) \times k}$
and $e_1 = (1,0,\ldots,0)^\trans \in \mathbb{R}^{k+1}$;
we describe some stopping rules later in Section~\ref{sec:sr}.

\medskip\noindent
\begin{tabbing}
	xxx \= xxx \= xxx \= xxxxxxxxxxxxxxxxxxxxxxxxxxxxxxx \= xxx \= xxx \= xxx \= xxx \kill
	\textbf{Algorithm AB-GMRES} \> \> \> \> \textbf{Algorithm BA-GMRES} \\
	\> Choose initial $x_0$ \> \> \> \> Choose initial $x_0 $ \\
	\> $r_0 = b - A\, x_0$ \> \> \> \> $r_0 = \Bb - \BA \, x_0$ \\
	\> $w_1 = r_0 / \| r_0 \|_2$ \> \> \> \> $w_1 = r_0 / \| r_0 \|_2$ \\
	\> for $k= 1,2,\ldots$ \> \> \> \> for $k= 1,2,\ldots$ \\
	\> \> $q_k = AB \, w_k$ \> \> \> \> $q_k = \BA \, w_k$ \\
	\> \> for $i=1,2,\ldots,k$ \> \> \> \> for $i=1,2,\ldots,k$ \\
	\> \> \> $h_{i,k} = q_k^\trans w_i$ \> \> \> \> $h_{i,k} = q_k^\trans w_i$ \\
	\> \> \> $q_k = q_k - h_{i,k} \, w_i$ \> \> \> \> $q_k = q_k - h_{i,k} \, w_i$ \\
	\> \> endfor \> \> \> \> endfor \\
	\> \> $h_{k+1,k} = \| q_k \|_2$ \> \> \> \> $h_{k+1,k} = \| q_k \|_2$ \\
	\> \> $w_{k+1} = q_k / h_{k+1,k}$ \> \> \> \> $w_{k+1} = q_k / h_{k+1,k}$ \\
	\> \> $y_k = \arg\min_y
	\left\| \: \| r_0 \|_2 \, e_1 - H_k \, y \, \right\|_2 $ \> \> \> \>
	$y_k = \arg\min_y
	\left\| \: \| r_0 \|_2 \, e_1 - H_k \, y \, \right\|_2$ \\
	\> \> $x_k = x_0 + B \, [ w_1,w_2,\ldots,w_k ] \, y_k$ \> \> \> \>
	$x_k = x_0 + [ w_1,w_2,\ldots,w_k ] \, y_k$ \\
	\> \> $r_k =b-A\,x_k$ \> \> \> \> $r_k=b-A\, x_k$ \\
	\> \> stopping rule goes here \> \> \> \> stopping rule goes here \\
	\> endfor \> \> \> \> endfor
\end{tabbing}

The following statements about the convergence are from \cite{HaYI10}.
We emphasize that the two methods use the same Krylov subspace $\mathcal{K}_k(\BA, \Bb)$
for the solution, but they use different objective functions.

\medskip\noindent
\textbf{AB-GMRES}
\begin{itemize}
	\item
	AB-GMRES applies GMRES to $\min_u \| b - AB\,u \|_2$, $x=B\, u$, i.e.,
	forms the iterates $u_k \in u_0 + \mathcal{K}_k(AB,b)$ that minimize
	$\| b - AB\,u \|_2$, i.e., $x_k \in x_0 + \mathcal{K}_k(\BA, \Bb)$
	that minimize $\| b - A\, x \|_2$.
	\item
	The equality~$\min_x \| b-A\, x \|_2 = \min_z \| b - AB\, z \|_2$ holds for all
	$b\in\mathbb{R}^m$
	if and only if $\range(AB) = \range(A)$ \cite[Theorem~3.1]{HaYI10}.
	Note that $\range(AB) = \range(A)$ holds if $\range(B) = \range(A^\trans)$
	\cite[Lemma~3.3]{HaYI10}.
	\item
	If $\range(B) = \range(A^\trans)$, then AB-GMRES determines
	a solution of $\min_x \| b-A\, x \|_2$ without breakdown for all
	$b \in \mathbb{R}^m$  if and only if $\range(B^\trans) = \range(A)$
	\cite[Corollary~3.8.]{HaYI10}.
\end{itemize}

\noindent
\textbf{BA-GMRES}
\begin{itemize}
	\item
	BA-GMRES applies GMRES to min $\| \Bb  - \BA \, x \|_2$, i.e., forms the iterates
	$x_k \in x_0 + \mathcal{K}_k(\BA, \Bb)$ that minimize
	$\| \Bb - \BA\,x \|_2$.
	\item
	The problems $\min_x \| b - A\, x \|_2$ and $\min_x \| \Bb - \BA\, x \|_2$
	are equivalent for all $b\in\mathbb{R}^m$ if and only if $\range(B^\trans \BA) = \range(A)$
	\cite[Theorem~3.11]{HaYI10}.
	Note that $\range(B^\trans \BA) = \range(A)$ holds if
	$\range(B^\trans) = \range(A)$ \cite[Lemma~3.14]{HaYI10}.
	\item
	If $\range(B^\trans)=\range(A)$ and $\range(B) \cap \mathcal{N}(A) = \{ 0 \}$, then
	BA-GMRES determines a solution of $\min_x \| b - A\, x \|_2$ without breakdown
	for all $b \in \mathbb{R}^m$ \cite[Theorem~3.1]{MorikuniHayami2015}.
\end{itemize}

The conditions on the ranges $\range(AB) = \range(A)$ and
$\range(B^\trans \BA) = \range(A)$ ensure the equivalence between the
matched and unmatched normal equations, as seen above.
Simply, $\range(B) = \range(A^\trans)$ implies
$\range(AB) = \range(A)$ and
$\range{R}(B^\trans) = \range(A)$ implies
$\range(B^\trans \BA) = \range(A)$.
Further, these conditions also ensure the convergence of AB- and BA-GMRES\@.
However, it is not easy to check if these conditions are satisfied in practice.

If $B \approx A^\trans$, solving the unmatched normal equations by using AB-
and BA-GMRES we expect to obtain an approximation to the solution.
This expectation can be supported by the observation that if $B$ is close to
$A^\trans$, then $\range(B)$ and $\range(B^\trans)$ can be close to
$\range(A^\trans)$ and $\range(A)$, respectively, e.g.,
the principal angles between the pair of the ranges $\range(B)$ and
$\range(A^\trans)$ and pair $\range(B^\trans)$ and
$\range(A)$ can be small from the extended $\sin \theta$
theorem~\cite[Section~3]{Wedin1972BIT}, \cite[Chapter~2]{StewartSun1990}
\begin{equation}
	\left( {\left\| \sin \theta \bigl( \range(B), \range(A^\trans) \bigr)
		\right\|}^2 + {\left\| \sin \theta \bigl( \range(B^\trans), \range(A) \bigr)
		\right\|}^2 \right)^{1/2} \leq
	\frac{ \sqrt{2} \, \| B - A^\trans \|}{\max\bigl(\sigma_r(A),\sigma_r(B\bigr))} \ ,
\end{equation}
where $\| \cdot \|$ is either the 2-norm or the Frobenius norm and
we assume that $r = \mathrm{rank}(A) = \mathrm{rank}(B)$.
Moreover, $ \sin \theta(\mathcal{S},\mathcal{T}) = \mathrm{diag}( \sin \theta_1, \sin \theta_2,\dots, \sin \theta_r)$
is a diagonal matrix with the principal angles
$\theta_1 \geq \theta_2 \geq \cdots \geq \theta_r$ between subspaces
$\mathcal{S}$ and $\mathcal{T}$ with $r = \dim \mathcal{S} = \dim \mathcal{T}$.
In case of the matrix 2-norm, we can instead use the upper bound
\[ \sqrt{2} \min \left(\, \kappa_2(A) \, \frac{\| B - A^\trans \|_2}{\| A \|_2},
\kappa_2(B) \, \frac{\| B - A^\trans \|_2}{\| B \|_2} \right) .
\]

\section{First-Order Perturbation Theory}
\label{sec:pert}

To further motivate the use of the unmatched normal equations,
we consider the difference between the solutions of
the matched and unmatched normal equations, and we study how this
difference depends on the unmatchness of the pair $(A,B)$.

A first-order perturbation analysis for the unmatched normal equations
was given in \cite[Section~2.1]{ElHa18}.
Their analysis gave a bound for the distance between the closest pair of a point
in the solution set of the least squares problem~
$\min_{x \in \mathbb{R}^n} \| b - A \, x \|_2$ and a point in the solution set
of its perturbed problem.
Here, we perform an alternative first-order perturbation analysis specifically
for the minimum-norm solutions, as discussed below.
This analysis refines the previous perturbation analysis in \cite[Section~2.1]{ElHa18}.

Generally, we can assume to have modeling error also in $A$, in addition to $B$.
Let $A$ and $\bar{b}$ be the ideal model and data, respectively, and let
\begin{align}
	\tilde{A} = A + E_1, \quad \tilde{B} = A^\trans + E_2^\trans, \quad
	b = \bar{b} + \delta b
\end{align}
be the perturbed models and data.
Moreover, let $\bar{x}$ be a ground truth solution of
$\min_{x \in \mathbb{R}^n} \| b - A x \|_2$ and
$\bar{r} = \bar{b} - A \bar{x}$ be the corresponding least squares residual,
for which $A^\trans \bar{r} = 0$ holds.
The perturbed matrices $\tilde{A}$ and $\tilde{B}$ can be regarded as $A$ and $B$
in the unmatched normal equations~\eqref{eq:unmatchednormalequations}.

Now, consider solving the perturbed unmatched normal equations 
\begin{align}
	\tilde{B} \hspace{1.9pt} \tilde{A} (x_{\min} + \delta x_{\min}) =
	\tilde{B} \hspace{1.9pt} b \ ,
	\label{eq:unmatched_ne}
\end{align}
where $x_{\min}$ denotes the minimum-norm solution of the
unperturbed normal equations $A^\trans A \, x = A^\trans b$.
Note that the linear system~\eqref{eq:unmatched_ne} is consistent if
$\tilde{B} b \in \mathcal{R}(\tilde{B} \tilde{A})$.
Irrespective of the inconsistency of \eqref{eq:unmatched_ne},
the minimum-norm solution of the least squares problem
\begin{align}
	\min_{x_{\min} + \delta x_{\min} \in \mathbb{R}^n}
	\| \tilde{B} \hspace{1.9pt} b - \tilde{B} \hspace{1.9pt}\tilde{A} \,
	(x_{\min} + \delta x_{\min}) \|_2
	\label{eq:trans_unmatched_ne}
\end{align}
is given by $(\tilde{B} \hspace{1.9pt} \tilde{A})^\dag \tilde{B} \hspace{1.9pt} b$,
where $\dagger$ denotes the Moore-Penrose generalized inverse (pseudoinverse).
We are concerned with the difference
between the minimum-norm solution of $\min_{x \in \mathbb{R}^n} \| b - A x \|_2$
and the minimum-norm solution $x_\mathrm{min} + \delta x_\mathrm{min}$
of \eqref{eq:trans_unmatched_ne}.
Note that the solution of interest in \eqref{eq:Axb}
is not necessarily the minimum-norm solution
but may lie close to it.

\begin{theorem} \label{th:perturb_bound}
	Assume that $\tilde{A}$ and $\tilde{B}$ are both acute perturbations of $A$
	and $A^\trans$, respectively, i.e.,
	\[
	\| P_{\mathcal{R}(\tilde{A})} - P_{\mathcal{R}(A)} \|_2 < 1 \ , \quad
	\| P_{\mathcal{R}(\tilde{A}^\trans)} - P_{\mathcal{R}(A^\trans)} \|_2 < 1
	\]
	and
	\[
	\| P_{\mathcal{R}(\tilde{B})} - P_{\mathcal{R}(A^\trans)} \|_2 < 1 \ , \quad
	\| P_{\mathcal{R}(\tilde{B}^\trans)} - P_{\mathcal{R}(A)} \|_2 < 1 \ ,
	\]
	respectively, where $P_\mathcal{S}$ denotes the orthogonal projection onto a
	subspace $\mathcal{S}$.
	Then, the first-order bound of the relative error norm is given by
	\begin{align}
		\frac{\| \delta x_{\min} \|_2}{\| x_{\min} \|_2}
		\leq \kappa_2 (A) \left[ \sigma_r^{-1} \left( 2 \| E_1 \|_2
		\frac{\| \bar{b}|_{\mathcal{R}(A)} \|_2}{\| \bar{b} \|_2} + \| E_2 \|_2
		\frac{\| \bar{b}|_{\mathcal{R}(A)^\perp} \|_2}{\| \bar{b} \|_2} \right) +
		\frac{\| \delta b|_{\mathcal{R}(A)} \|_2}{\| \bar{b} \|_2} \right],
	\end{align}
	where $\sigma_r$ denotes the smallest nonzero singular value of $A$
	and $\kappa_2(A) = \| A \|_2/\sigma_r$ is the condition number of $A$.
\end{theorem}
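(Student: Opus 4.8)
The plan is to treat $E_1$, $E_2$, and $\delta b$ as small and to linearize the map sending the data and the (unmatched) operator pair to the minimum-norm solution $(\tilde B \tilde A)^\dagger \tilde B b$ of \eqref{eq:trans_unmatched_ne}. Writing the ideal noise-free matched solution as $x_{\min} = A^\dagger \bar b$, the quantity to bound is $\delta x_{\min} = (\tilde B \tilde A)^\dagger \tilde B b - x_{\min}$, and I would expand this difference keeping only first-order contributions in the three perturbations. The role of the four acute-perturbation hypotheses is to guarantee $\mathrm{rank}(\tilde A) = \mathrm{rank}(\tilde B) = \mathrm{rank}(\tilde B \tilde A) = r = \mathrm{rank}(A)$, so that the relevant Moore--Penrose inverses depend differentiably on the perturbations and the expansion is legitimate; without rank preservation the pseudoinverse is discontinuous and no such bound can hold.

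First I would expand the two ingredients to first order,
\[
\tilde B \tilde A = A^\trans A + (A^\trans E_1 + E_2^\trans A) + O(2), \qquad
\tilde B\, b = A^\trans \bar b + (A^\trans \delta b + E_2^\trans \bar b) + O(2),
\]
and then, writing $C_0 = A^\trans A$ and $\delta C = A^\trans E_1 + E_2^\trans A$, apply the standard (Wedin/Stewart) first-order formula for the pseudoinverse,
\[
(C_0 + \delta C)^\dagger = C_0^\dagger - C_0^\dagger (\delta C) C_0^\dagger
+ C_0^\dagger (C_0^\dagger)^\trans (\delta C)^\trans (I - C_0 C_0^\dagger)
+ (I - C_0^\dagger C_0)(\delta C)^\trans (C_0^\dagger)^\trans C_0^\dagger + O(2).
\]
The essential subtlety is that although $C_0 = A^\trans A$ is symmetric, the perturbation $\delta C = A^\trans E_1 + E_2^\trans A$ is \emph{not}, so both ``cross'' terms weighted by the complementary projectors must be retained; this is precisely where the unmatchedness ($\tilde B \neq \tilde A^\trans$) enters and produces the $E_2$ contribution that a naive $\tilde A^\dagger b$ analysis would miss.

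Next I would collect terms using the identities $(A^\trans A)^\dagger A^\trans = A^\dagger$, $A (A^\trans A)^\dagger = (A^\dagger)^\trans$, the projectors $C_0 C_0^\dagger = C_0^\dagger C_0 = A^\dagger A = P_{\mathcal{R}(A^\trans)}$, and the facts $A^\trans \bar r = 0$ and $A x_{\min} = \bar b|_{\mathcal{R}(A)}$. Two simplifications do most of the work: the cross term containing $(I - C_0 C_0^\dagger)$ annihilates $A^\trans \bar b \in \mathcal{R}(A^\trans)$ and drops out, while in the other cross term $(I - P_{\mathcal{R}(A^\trans)}) A^\trans = 0$ removes its $E_2$ piece. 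Combining what survives and using $\bar b = \bar b|_{\mathcal{R}(A)} + \bar b|_{\mathcal{R}(A)^\perp}$ to merge the two $E_2$-terms, I expect to reach an explicit expression of the form
\[
\delta x_{\min} = A^\dagger\, \delta b - A^\dagger E_1 x_{\min}
+ (I - P_{\mathcal{R}(A^\trans)}) E_1^\trans (A^\dagger)^\trans x_{\min}
+ (A^\trans A)^\dagger E_2^\trans\, \bar b|_{\mathcal{R}(A)^\perp} + O(2),
\]
in which the two $E_1$-terms are each of size $\sigma_r^{-1}\|E_1\|_2\|x_{\min}\|_2$ (accounting for the factor $2$), the $E_2$-term is of size $\sigma_r^{-2}\|E_2\|_2\,\|\bar b|_{\mathcal{R}(A)^\perp}\|_2$, and only the $\mathcal{R}(A)$-component of $\delta b$ survives since $A^\dagger \delta b = A^\dagger\, \delta b|_{\mathcal{R}(A)}$.

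Finally I would take $2$-norms term by term, insert $\|A^\dagger\|_2 = \|(A^\dagger)^\trans\|_2 = \sigma_r^{-1}$ and $\|(A^\trans A)^\dagger\|_2 = \sigma_r^{-2}$, and divide by $\|x_{\min}\|_2$; introducing $\kappa_2(A) = \|A\|_2/\sigma_r$ through $\|\bar b|_{\mathcal{R}(A)}\|_2 = \|A x_{\min}\|_2 \le \|A\|_2 \|x_{\min}\|_2$ converts the absolute bounds into the stated relative form, with the data-projection ratios $\|\bar b|_{\mathcal{R}(A)}\|_2/\|\bar b\|_2$ and $\|\bar b|_{\mathcal{R}(A)^\perp}\|_2/\|\bar b\|_2$ appearing naturally. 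The main obstacle is not any single estimate but the careful bookkeeping of the non-symmetric pseudoinverse perturbation---deciding which projector-weighted terms survive and correctly merging the $E_2$ contributions---together with the rigorous justification that the discarded $O(2)$ terms are genuinely higher order, which again rests on the acute-perturbation hypotheses ensuring rank stability of $\tilde A$, $\tilde B$, and $\tilde B \tilde A$.
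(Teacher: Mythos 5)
Your proposal is correct, and it reproduces essentially verbatim the paper's own alternative proof of Theorem~\ref{th:perturb_bound}, which likewise expands $(\tilde{B}\tilde{A})^\dagger \tilde{B} b$ directly by applying the Hung--Markham formula for the pseudoinverse of a sum to $A^\trans A + E$ with $E = A^\trans E_1 + E_2^\trans A$ (your Wedin/Stewart expansion is exactly the first-order content of that formula) and arrives at the identical first-order expression $\delta x_{\min} = -A^\dagger E_1 A^\dagger\, \bar{b}|_{\mathcal{R}(A)} + P_{\mathcal{N}(A)} E_1^\trans (A A^\trans)^\dagger\, \bar{b}|_{\mathcal{R}(A)} + (A^\trans A)^\dagger E_2^\trans\, \bar{b}|_{\mathcal{R}(A)^\perp} + A^\dagger\, \delta b|_{\mathcal{R}(A)}$ before taking norms. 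The paper's primary proof instead passes through an orthogonal block decomposition $U^\trans A V$ and a Hung--Markham block two-by-two pseudoinverse (Lemma~\ref{lm:err_norm}), but it ends at the same surviving terms and the same final normalization $\| x_{\min} \|_2 \geq \| A \|_2^{-1} \| \bar{b} \|_2$ that your last step invokes via $\|\bar{b}|_{\mathcal{R}(A)}\|_2 = \|A\, x_{\min}\|_2 \leq \|A\|_2 \|x_{\min}\|_2$.
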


See \ref{app:proof4perturb_bound} for the proof.
This theorem shows that the error bound depends linearly on
$\| E_1 \|$, $\| E_2 \|$, $\| \bar{b}|_{\mathcal{R}(A)} \|$,
$\| \bar{b}|_{\mathcal{R}(A)^\perp} \|$
and $\| \delta b |_{\range(A) \|_2}$, whereas
the bound is independent of $\delta b|_{\mathcal{R}(A)^\perp}$.
If the smallest nonzero singular value~$\sigma_r$ is very small,
then the perturbations~$E_1$ and $E_2$ can greatly affect the error.

The first-order bound of the relative error norm in the ``inverse crime'' case,
where $\bar{b} = A\,\bar{x}$ with the minimum-norm solution $\bar{x} = x_{\min}$
and hence $\bar{r}=0$, is given as follows.

\begin{corollary} \label{cor:perturb_bound}
	Assume that $E_1 = 0$, $\tilde{B}$ is an acute perturbations of
	$A^\trans$, and $\bar{b}|_{\mathcal{R}(A)^\perp} = 0$.
	Then, the first-order bound of the relative error norm is given by
	\begin{align}
		\frac{\| \delta x_{\min} \|_2}{\| x_{\min} \|_2} \leq \kappa_2(A)
		\frac{\| \delta b|_{\mathcal{R}(A)} \|_2}{\| \bar{b} \|_2}.
	\end{align}
\end{corollary}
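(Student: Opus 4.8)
The plan is to derive this corollary directly from Theorem~\ref{th:perturb_bound}, so the only work is to confirm that the theorem's hypotheses hold under the present assumptions and then to substitute the special values into its bound. First I would check the four acuteness conditions. Because $E_1 = 0$ we have $\tilde{A} = A$, whence $P_{\mathcal{R}(\tilde{A})} = P_{\mathcal{R}(A)}$ and $P_{\mathcal{R}(\tilde{A}^\trans)} = P_{\mathcal{R}(A^\trans)}$, so the two conditions on $\tilde{A}$ hold trivially with projector differences equal to zero. The remaining two conditions, $\| P_{\mathcal{R}(\tilde{B})} - P_{\mathcal{R}(A^\trans)} \|_2 < 1$ and $\| P_{\mathcal{R}(\tilde{B}^\trans)} - P_{\mathcal{R}(A)} \|_2 < 1$, are exactly the assumption that $\tilde{B}$ is an acute perturbation of $A^\trans$. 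Thus all hypotheses of Theorem~\ref{th:perturb_bound} are in force.

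Next I would substitute the special values into the theorem's inequality. Setting $E_1 = 0$ annihilates the term $2\|E_1\|_2 \, \|\bar{b}|_{\mathcal{R}(A)}\|_2 / \|\bar{b}\|_2$ inside the parentheses. The assumption $\bar{b}|_{\mathcal{R}(A)^\perp} = 0$ gives $\|\bar{b}|_{\mathcal{R}(A)^\perp}\|_2 = 0$, which annihilates the term $\|E_2\|_2 \, \|\bar{b}|_{\mathcal{R}(A)^\perp}\|_2 / \|\bar{b}\|_2$ irrespective of the size of $E_2$. Since $\sigma_r > 0$ is finite, the entire bracketed factor $\sigma_r^{-1}(\cdots)$ vanishes, leaving only the summand $\|\delta b|_{\mathcal{R}(A)}\|_2 / \|\bar{b}\|_2$. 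Multiplying by $\kappa_2(A)$ yields precisely the claimed inequality.

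There is no genuine obstacle here beyond bookkeeping, since the content of the corollary is inherited entirely from the theorem. The only point meriting a moment's care is to confirm that the ``inverse crime'' setup described just before the statement is consistent with the hypothesis $\bar{b}|_{\mathcal{R}(A)^\perp} = 0$: indeed $\bar{b} = A\,\bar{x}$ lies in $\mathcal{R}(A)$ by construction, so its projection onto $\mathcal{R}(A)^\perp$ is zero, which is exactly the stated assumption. Hence the specialization is legitimate and the proof is immediate.
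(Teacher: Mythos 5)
Your proposal is correct and matches the paper's own argument: the paper likewise obtains the corollary by direct specialization of Theorem~\ref{th:perturb_bound}, with $E_1 = 0$ killing the first term and $\bar{b}|_{\mathcal{R}(A)^\perp} = 0$ killing the second, leaving only the $\delta b|_{\mathcal{R}(A)}$ contribution. Your extra checks -- that the acuteness hypotheses on $\tilde{A}$ hold trivially when $E_1 = 0$ and that the inverse-crime setup $\bar{b} = A\,\bar{x}$ indeed gives $\bar{b}|_{\mathcal{R}(A)^\perp} = 0$ -- are sound and consistent with the paper's remarks.
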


This corollary follows directly from Theorem~\ref{th:perturb_bound}
and shows that the error bound is independent of $E_2$ and
$\delta b|_{\mathcal{R}(A)^\perp}$ in the ``inverse crime'' case.
Note that the higher-order terms of these quantities,
such as $E_2^T \delta b$, can contribute to the error.

The above analysis focuses on the unmatched normal equations that
BA-GMRES deals with.
A first-order perturbation analysis for the unmatched normal
equations~$\tilde{A} \tilde{B} y = b$, $x = \tilde{B} y$ that AB-GMRES
deals with in the consistent case~$b \in \mathcal{R}(\tilde{A}
\tilde{B})$ is performed in \cite[section~2.2]{ElHa18}.
The corresponding analysis in the inconsistent case~$b \not \in
\mathcal{R}(\tilde{A} \tilde{B})$ is left open.

\section{Iterative Regularization and Semi-Convergence}
\label{sec:iter}

When we discretize an inverse problem we obtain a coefficient matrix $A$
whose nonzero singular values decay gradually to zero with no gap anywhere,
and $A$ has a large condition number.
Therefore, it is not a good idea to naively solve the problem \eqref{eq:Axb}
with noisy data.
With the notation from \eqref{eq:Axb} and
assuming that the exact solution $\bar{x}$ satisfies
\begin{equation}
	\bar{x} = A^{\dagger} \bar{b} \in \range(A^\trans) \ ,
\end{equation}
the minimum-norm least squares solution to the noisy problem has the form
$A^{\dagger}b = \bar{x} + A^{\dagger}e$.
Here, the second term $A^{\dagger}e$ is highly undesired because -- due to the
large condition number of $A$ -- it has elements that are much
larger than those in~$\bar{x}$.
We need to use a regularization method that filters the influence from the noise.

The singular value decomposition (SVD) provides a convenient framework for
analyzing this situation.
Let the coefficient matrix $A$ in \eqref{eq:Axb} have the SVD
\begin{equation}
	\label{eq:SVD}
	A = U\, \Sigma\, V^\trans = \sum_{i=1}^r u_i \, \sigma_i \, v_i^\trans \ , \qquad
	\sigma_1 \geq \sigma_r \geq \cdots \geq \sigma_r > 0 \ , \qquad
	r = \mathrm{rank}(A)
\end{equation}
with
\begin{equation}
	U = [ \, u_1, \, u_2, \, \ldots, u_r \, ] \in \mathbb{R}^{m\times r} \ , \qquad
	\Sigma = \mathrm{diag}(\sigma_i) \in \mathbb{R}^{r\times r} \ , \qquad
	V = [ \, v_1, \, v_2, \, \ldots, v_r \, ] \in \mathbb{R}^{n\times r} \ .
\end{equation}
Then we can write the minimum-norm least squares solution as
\begin{equation}
	\label{eq:mnsol}
	A^{\dagger}b = \bar{x} + A^{\dagger}e =
	\sum_{i=1}^r \frac{u_i^\trans\bar{b}}{\sigma_i} v_i +
	\sum_{i=1}^r \frac{u_i^\trans e}{\sigma_i} v_i \ .
\end{equation}
Discretizations of inverse problems satisfy the \emph{discrete Picard condition} (DPC)
meaning that, in average, the absolute values $|u_i^\trans\bar{b}|$ decay
faster than the singular values \cite[Section~3.3]{Hansen}.
The first term in \eqref{eq:mnsol} is equal to the exact, noise-free solution
$\bar{x} = A^{\dagger} \bar{b}$
and the DPC ensures that its norm stays bounded as the problem size increases.
If $e$ is white noise then so is $U^\trans e$ meaning that the coefficients
$u_i^\trans e/\sigma_i$ will, on average, increase due to the decreasing singular values.
Consequently, the second term in \eqref{eq:mnsol}
is typically much larger than the first term -- and its
norm increases with the problem size.
All regularization methods essentially filter or dampen the SVD components
corresponding to the smaller singular values, thus reducing the influence
of the noise and, at the same time, computing a good approximation to~$\bar{x}$.

The key mechanism behind the use of \emph{iterative solvers} for computing
solutions to inverse problems with noisy data, such as the CT reconstruction problem,
is known as \emph{semi-convergence} \cite[Chapter~6]{Hansen}.
When we apply an iterative method (typically a least squares solver) to
the noisy problem \eqref{eq:Axb} then the reconstruction
error $\| \bar{x}-x_k \|_2$, where $x_k$ denotes the $k$th iterate,
exhibits two different phases:
\begin{enumerate}
	\item
	During the initial iterations $\| \bar{x}-x_k \|_2$ decreases and
	$x_k$ appears to approach the ground truth~$\bar{x}$.
	\item
	After a while $\| \bar{x}-x_k \|_2$ starts to increase, and
	asymptotically $x_k$ converges to the undesired least squares solution.
\end{enumerate}
To obtain a meaningful regularized solution we must stop the iterations at
the transition point where the iteration vector $x_k$ is as close to
$\bar{x}$ as possible.
Development of stopping rules that seek to terminate the iterations at this point
are closely related to methods for choosing regularization parameters
(see, e.g., \cite[Chapter~5]{SIAMbook} and \cite{ReRo13});
overviews of stopping rules in the context of CT
are given in \cite[Section~11.2]{SIAMbook} and \cite{HaJR21}.

Deeper insight into the semi-convergence, and explanations when and why
it manifests itself, has been a topic of research for many years.
For methods where $x_k$ can be expressed as a filtered SVD solution of the form
\begin{equation}
	\label{eq:fSs}
	x_k = \sum_{i=1}^r \phi_i^{(k)} \, \frac{u_i^\trans b}{\sigma_i} v_i \ ,
	\qquad \phi_i^{(k)} = \hbox{filter factor at $k$th iteration} \ ,
\end{equation}
we have a good understanding, see, e.g., \cite{SIAMbook} and
\cite[Chapter~6]{Hansen}.
For example, for the Landweber iteration
$x_k = x_{k-1} + \omega\, A^\trans (b-A\, x_{k-1})$ with relaxation parameter $\omega$
we have $\phi_i^{(k)} =  1 - (1 - \omega\,\sigma_i^2)^k$, and for CGLS
the filters $\phi_i^{(k)}$ can be expressed as polynomials of degree $k$
that depend on the Ritz values associated with the $k$th CGLS iteration,
cf.\ \cite[Section~6.4.3]{DIPbook}.

For other methods where $x_k$ cannot easily be expressed in terms of the SVD
the understanding is less mature (see  \cite{ElHN14}, \cite{LiHH21} for
emerging insight into Kaczmarz's method).
The analysis of the regularizing properties of GMRES applied to $A\, x=b$
is complicated by
the fact that it is connected to the convergence of the Ritz values
of the underlying Arnoldi algorithm.
\begin{itemize}
	\item
	The insight obtained from \cite{CaLR02} is that
	if the noise-free data $\bar{b}$ lies in a finite-dimensional Krylov subspace,
	and if GMRES is equipped with a suitable stopping rule,
	then the GMRES-solution converges to the exact solution $\bar{x}$
	as the noise goes to zero.
	\item
	The focus of \cite{GaNo16} is so-called ``hybrid methods'' where regularization
	is applied to the Hessenberg systems in GMRES, but the main result in
	\cite[\S 3.1.2]{GaNo16} applies more generally:
	Assume that the system in \eqref{eq:Axb} satisfies the DPC and that
	the left singular vectors of the Hessenberg matrices of two consecutive GMRES steps,
	applied to $A\, x=b$, resemble each other --
	then the Hessenberg systems in GMRES also satisfy the DPC.
\end{itemize}
Taken together, these results imply that if all the SVD components corresponding
to the larger singular values are captured in order of decreasing
magnitude, when GMRES is applied to $A\, x=b$,
then GMRES will exhibit semi-convergence.
Unfortunately, a complete understanding of these aspects has not emerged yet.
Hence, while the semi-convergence aspect of GMRES is crucial in this work,
we primarily rely on insight obtained from numerical experiments.

\section{The Regularizing Properties of AB- and BA-GMRES with a Matched Transpose}
\label{sec:RegProp}

To understand the regularizing properties of the AB- and BA-GMRES methods
when $B \approx A^\trans $, let us consider the limiting case when $B=A^\trans$
(the matched case).

\subsection{The AB-GMRES Algorithm with $B=A^\trans$}
The $k$th step of AB-GMRES with $B=A^\trans$ solves
\[
\min_{u \in u_0 + {\cal K}_k ( AA^\trans ,r_0 )}
{ \| b - A \, A^\trans u \|_2 }^2
\]
and it determines the $k$th iterate where $x_k = A^\trans u_k$. Hence,
\[
x_k = A^\trans u_k \in A^\trans u_0+ A^\trans {\cal K}_k (AA^\trans, r_0)
= x_0 +{\cal K}_k (A^\trans A, A^\trans r_0) \ .
\]
The method minimizes
\[
{ \| r_k \|_2 }^2 = { \| r_k |_{\range(A)} \|_2 }^2 +
{ \| r_k |_{ {\range(A) }^\perp } \|_2 }^2 \ ,
\]
where
\[
r_k = b - A x_k = b|_{\range(A)} + b|_{\range(A)^\perp} - A \, x_k
= r_k|_{\range(A)} + b|_{\range(A)^\perp}
\]
and
\[
r_k|_{\range(A)} = b|_{\range(A)} - A x_k \ , \qquad
r_k|_{\range(A)^\perp}=b|_{\range(A)^\perp} \ .
\]
Hence, the method minimizes ${ \| r_k|_{\range(A)} \|_2 }^2$.
In summary, the AB-GMRES method with $B=A^\trans$ minimizes
${ \| r_k|_{\range(A)} \|_2 }^2$, and its iterates satisfy
$ x_k \in x_0 + {\cal K}_k (A^\trans A, A^\trans r_0)$.

\subsection{The LSQR Algorithm}

Next, consider the LSQR algorithm for $\min_{x} \| b - A x\|_2$
which is mathematically equivalent to the CGLS method.
Note
\begin{equation}
	\label{NR}
	\min_{x} \| b - A \, x \|_2 \qquad \Longleftrightarrow \qquad A^\trans A \, x = A^\trans b
	\qquad \Longleftrightarrow \qquad A^\trans r = 0 \ ,
\end{equation}
where $r = b - A \, x$.
The LSQR and CGLS methods are mathematically equivalent to applying
the Conjugate Gradient (CG) method to the normal
equations $A^\trans A \, x = A^\trans b$.
They minimize $\be^\trans A^\trans A \be = (A\be)^\trans A\be = { \|A\be\|_2 }^2$,
where $\be = x-x^\ast$ and $x^\ast$ is any solution of \eqref{NR}. Note
\[
A\be=A(x-x^\ast)=Ax-Ax^\ast=(b-Ax^\ast)-(b-Ax) =r^\ast - r \ ,
\]
where
\[
r^\ast=b-A x^\ast, \qquad r=b-Ax \ .
\]
Also note
\[
r = b - A x 
= r|_{\range(A)} + r|_{\range(A)^\perp} \ ,
\]
where
\[
r|_{\range(A)}=b|_\range(A) -Ax^\ast, \qquad
r|_{\range(A)^\perp}=b|_{\range(A)^\perp} \ ,
\]
and
\[
r^\ast = b - A x^\ast 
= r^\ast|_{\range(A)} + r^\ast|_{\range(A)^\perp} \ ,
\]
where
\[
r^\ast|_{\range(A)}=b|_\range(A) - A \, x^\ast,\qquad
r^\ast|_{\range(A)^\perp}=b|_{\range(A)^\perp} \ .
\]
Hence,
\[
0 = A^\trans r^\ast = A^\trans \Bigl( r^\ast|_{\range(A)} +
r^\ast|_{\range(A)^\perp} \Bigr) = A^\trans r^\ast|_{\range(A)} \ ,
\]
since $ \range(A)^\perp = \nul(A^\trans)$. Hence, 
\[
0 = r^\ast|_{\range(A)} \in \nul(A^\trans) \cap \range(A) =
\range(A)^\perp \cap \range(A) = \{ 0 \} \ .
\]
Thus we have
\[
A\be=r^\ast-r=r^\ast|_{\range(A)} -r|_{\range(A)} = -r|_{\range(A)}
\]
and
\[
{ \|A \, \be\|_2 }^2 = { \|r|_{\range(A)} \|_2 }^2 \ .
\]
Thus, CGLS minimizes ${ \| r_k|_{\range(A)} \|_2 }^2 $, where
$ r_k = b - A x_k$. The iterates satisfy
$ x_k \in x_0 + {\cal K}_k (A^\trans A, A^\trans r_0)$.

Therefore, AB-GMRES with $B=A^\trans$, as well as LSQR and CGLS for
$ \min_x { \| b-Ax\|_2 }^2$, minimize ${ \|r_k|_{\range(A)}\|_2 }^2$,
where $r_k = b - A \, x_k$, and the iterates (solutions) are in the same space,
i.e., $x_k \in x_0 + {\cal K}_k (A^\trans A, A^\trans r_0)$.
Thus, these methods are mathematically equivalent.

In finite precision arithmetic, AB-GMRES with $B=A^\trans$ should be numerically
more stable than CGLS and LSQR, since AB-GMRES is based on the Arnoldi process
whereas LSQR and CGLS rely on short-term recurrences.
In fact, AB-GMRES may be numerically equivalent to LSQR and CGLS with full
reorthogonalization \cite{HaYI10},
and this is confirmed by our numerical experiments (which are not included here).

Now, it is well known that CGLS has good regularizing properties for discrete ill-posed
problems, leading to semi-convergence, see, e.g. \cite{Hansen,MHC,MHL,JH}.
When $B \approx A^\trans$, we may still apply AB-GMRES,
while applying LSQR (which is equivalent to applying CG to
the unmatched normal equations $\BA\, x = \Bb$) is not well founded
and may be problematic since $\BA$ is neither symmetric nor positive semi-definite.
Also, we may still expect semi-convergence of AB-GMRES, as will be demonstrated
in the numerical experiments.
Specifically, in Section \ref{sec:unmatchedness} we study experimentally
how semi-convergence is influenced by the difference between $B$ and $A^\trans$.

\subsection{The BA-GMRES Algorithm with a Matched Transpose}

BA-GMRES applies GMRES to $\min_x \| \Bb - \BA \, x\|_2$.
It minimizes ${ \|Br_k\|_2 }^2$ where $r_k=b-Ax_k$, and
\[
x_k \in x_0 + {\cal K}_k (\BA, Br_0) = x_0 + B\, {\cal K}_k (AB,r_0) \ .
\]
BA-GMRES with $B=A^\trans$ applies GMRES to
$\min_x \| A^\trans b - A^\trans Ax\|_2$.
It minimizes
\[
{ \|A^\trans r\|_2 }^2 = (A^\trans r)^\trans A^\trans r=r^\trans AA^\trans r \ ,
\]
where
\[
r = b-Ax = r|_{\range(A)} + r|_{\range(A)^\perp} \qquad \hbox{and} \qquad
r|_{\range(A)} = b_{\range(A)}-Ax \ , \quad
r|_{\range(A)^\perp} = b|_{\range(A)^\perp} = b|_{\nul(A^\trans)} \ .
\]
Thus,
\[
A^\trans r=A^\trans \Bigl( r|_\range(A) + b|_{\nul(A^\trans )} \Bigr)
=A^\trans r|_{\range(A)}
\]
and BA-GMRES with $B=A^\trans$ minimizes ${ \|A^\trans r|_{\range(A)}\|_2 }^2$.
The iterates satisfy
\[
x_k \in x_0 + {\cal K}_k (A^\trans A, A^\trans r_0)
= x_0 + A^\trans {\cal K}_k (AA^\trans, r_0) \ .
\]

BA-GMRES with $B=A^\trans$ applies GMRES to
\[
A^\trans Ax=A^\trans b \qquad \Longleftrightarrow \qquad
\min_{x\in \mathbb{R}^n} \| b - A x \|_2 \ ,
\]
which is mathematically equivalent to applying MINRES to the normal equations
$A^\trans A x = A^\trans b $, and which is equivalent to LSMR \cite{FS}.
Again, BA-GMRES with $B=A^\trans$ should be numerically more stable than LSMR\@.
It may be numerically equivalent to LSMR with full reorthogonalization, and
again our numerical experiments (not included here) confirm this.

Similar to CGLS, LSMR also has good regularizing properties when applied to
discrete ill-posed problems \cite{FS,J}.
We may still apply BA-GMRES when $B \approx A^\trans$, while it is not well founded
to apply LSMR since $\BA$ is no longer symmetric and we cannot apply MINRES
to $\BA\,x = \Bb$.
We expect that BA-GMRES exhibits semi-convergence, as will be demonstrated
in the numerical experiments.

\section{Numerical Examples}
\label{sec:NumEx}

In this section we illustrate the use of the AB- and BA-GMRES methods
for CT problems with an unmatched transpose.
We start with numerical results related to the eigenvalues
of the iterations matrices $AB$ and $\BA$, and then we demonstrate
the semi-convergence of the methods.
All computations are performed in MATLAB using our own implementations
of AB- and BA-GMRES which are available from us,
LSQR is from Regularization Tools \cite{Regtools} and LSMR is from
MathWorks' File Exchange \cite{LSMR}.

\subsection{The Test Problems}

\begin{table}
	\caption{\label{table:parameters}The parameters used to generate the small and large
		test matrices; the matrix dimensions are $m=N_{\mathrm{ang}}N_{\mathrm{det}}$
		and $n=N^2$.  We also show the ground truth $\bar{x}$ as an image.}
	\medskip
	\scriptsize
	\centering
	\renewcommand{\arraystretch}{1.3}
	\begin{tabular}{|l|c|c|c|} \hline
		Parameter & Small matrix & Large matrix & Ground truth \\ \hline
		Image size $N\times N$ & $128 \times 128$ & $420 \times 420$ &
		\multirow{6}{*}{\includegraphics[width=0.135\textwidth]{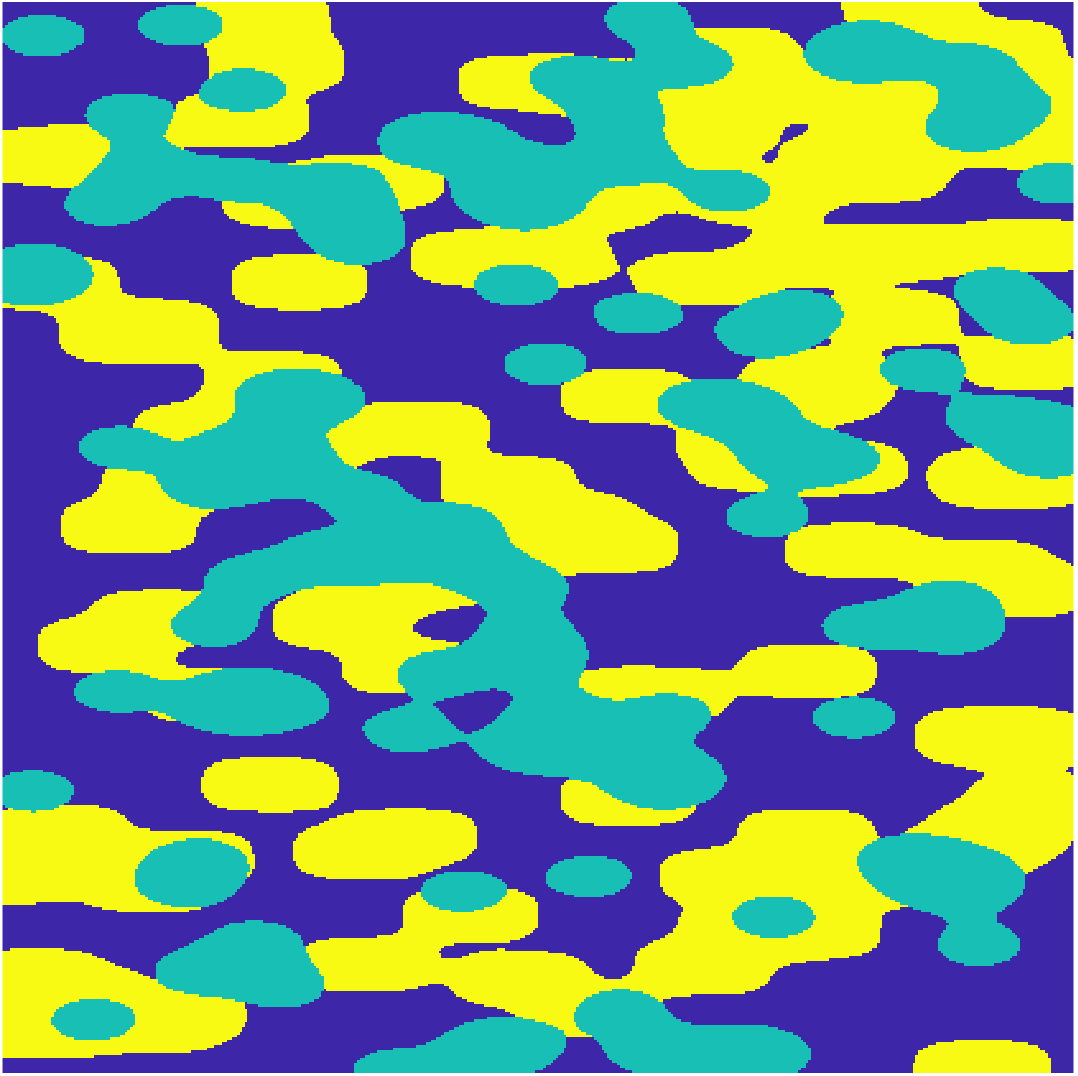}} \\
		Projection angles & $0^\circ,1^\circ,2^\circ,\ldots,179^\circ$ &
		$0^\circ,0.3^\circ,0.6^\circ,\ldots,179.7^\circ$ & \\
		No.\ projection angles $N_{\mathrm{ang}}$ & 180 & 600 & \\
		No.\ detector elements $N_{\mathrm{det}}$ & 128 & 420 & \\
		Matrix size $m\times n$ & $23\,040 \times 16\,384$ & $252\,000 \times 176\,400$ & \\
		Sparsity & $\approx$ 99\% & $\approx$ 99.6\% & \\ \hline
	\end{tabular}
\end{table}

\begin{table}
	\caption{\label{table:diffAA}The relative norm-wise differences between
		the three test matrices from ASTRA.}
	\medskip
	\scriptsize
	\centering
	\renewcommand{\arraystretch}{1.3}
	\begin{tabular}{|l|ccc|} \hline
		& $\| \As - \Al \|\fro / \| \As \|\fro$ & $\| \As - \Ai \|\fro / \| \As \|\fro$
		& $\| \Al - \Ai \|\fro / \| \Al \|\fro$ \\ \hline
		small matrices & 0.3700 & 0.1402 & 0.2648 \\
		large matrices & 0.3747 & 0.1405 & 0.2685 \\ \hline
	\end{tabular}
\end{table}

The matrices $A$ and $B$ used in these experiments are
representative of the matrices in many CT software
packages and applications.
They are generated by means
of the CPU version of the software package ASTRA \cite{ASTRA};
the matrices in the GPU version are not explicitly available
when using this software.
Three different discretization models are provided in ASTRA:
the line model, the strip model, and the interpolation (or Joseph) model;
see \cite[Chapter~9]{SIAMbook} for details.
We can then use any of the corresponding matrices
$\Al$, $\As$ and $\Ai$ to generate unmatched pairs $(A,B)$.
We use a parallel-beam geometry and two different sizes of these matrices
corresponding to the parameters listed in Table~\ref{table:parameters},
while Table~\ref{table:diffAA} lists the relative norm-wise differences
between these matrices.
The exact solution $\bar{x}$ is generated by means of the function
\begin{center}
	\texttt{phantomgallery('threephases',N)}
\end{center}
from \cite{AIRtoolsII}, and it is shown in Table~\ref{table:parameters}.
We add white Gaussian noise $e$ to $\bar{b}$ with two different relative
noise level $\| e \|_2 / \| \bar{b} \|_2 = 0.003$ and $0.03$.

\subsection{Eigenvalues}
\label{sec:eigenvalues}

\begin{figure}
	\centering
	\includegraphics[width=0.44\textwidth]{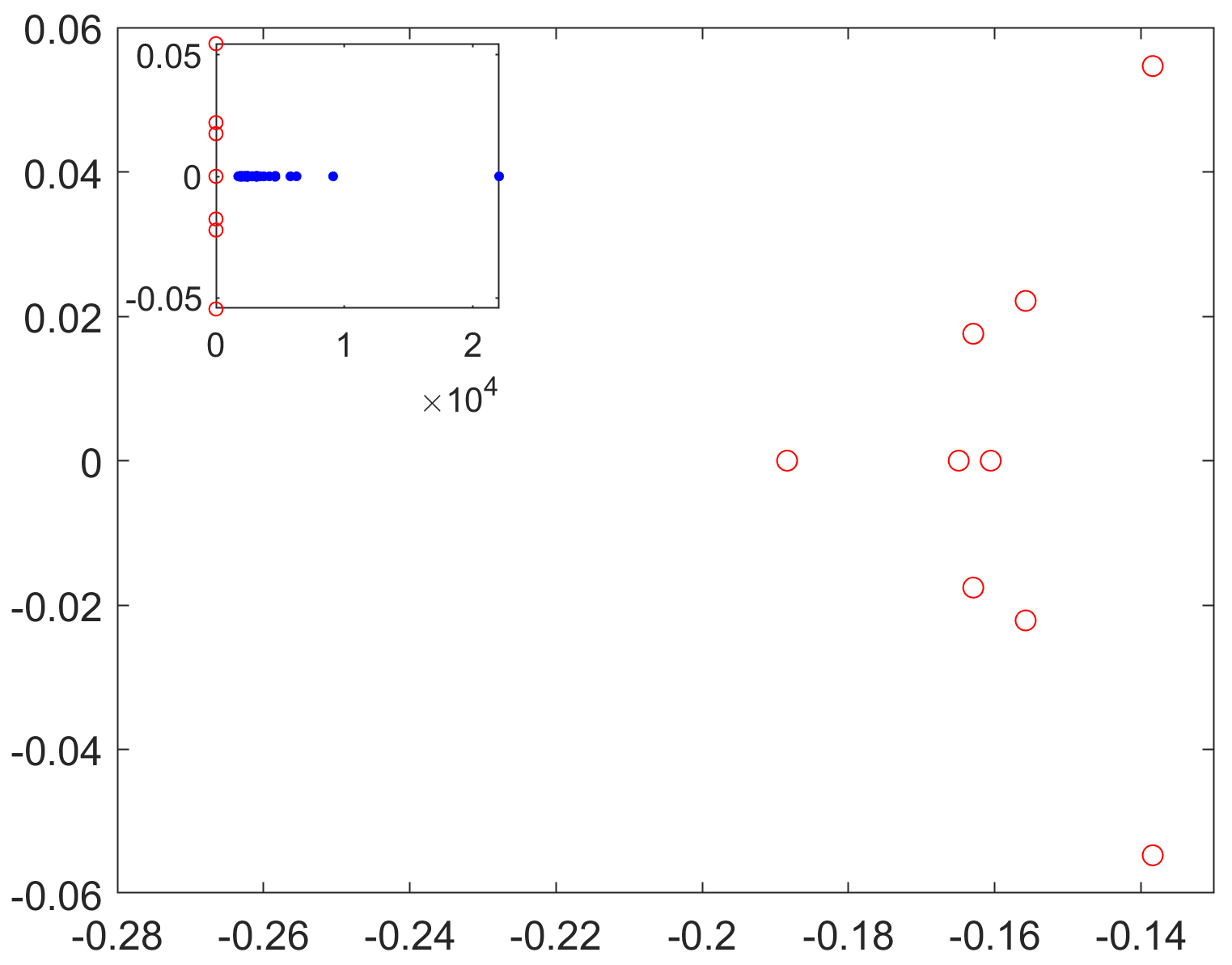} \hspace{2mm}
	\includegraphics[width=0.44\textwidth]{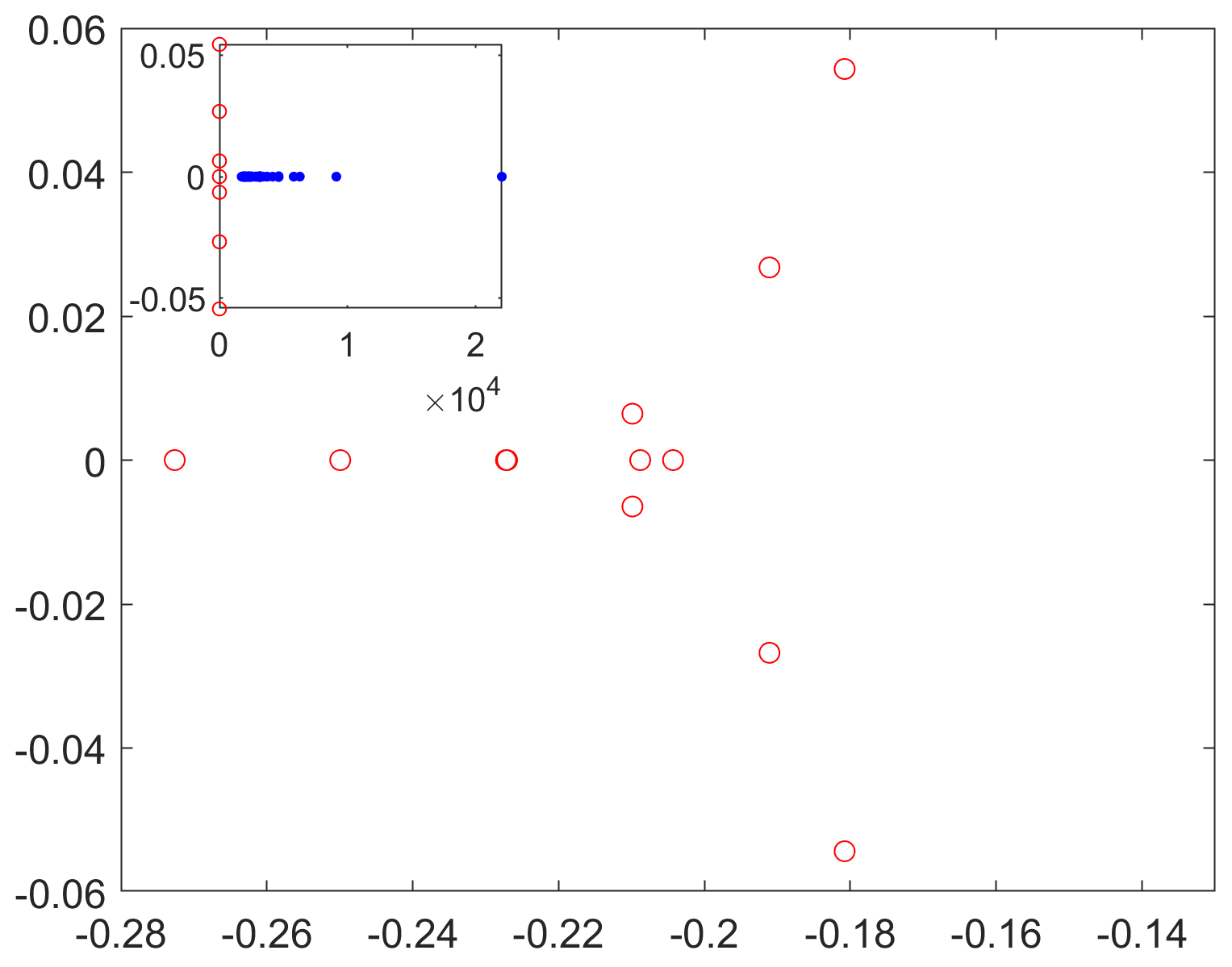} \\[2mm]
	\includegraphics[width=0.45\textwidth]{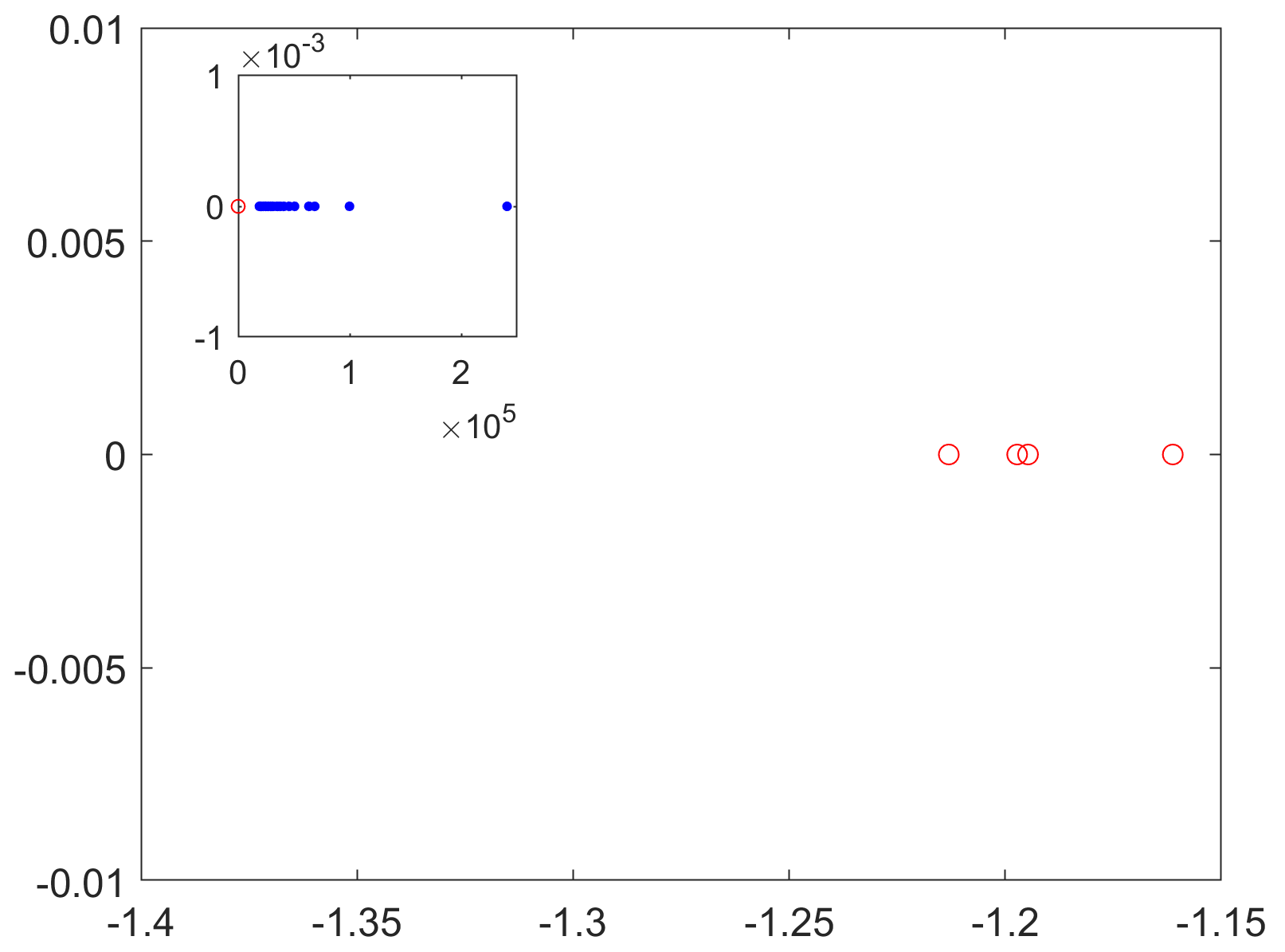} \hspace{2mm}
	\includegraphics[width=0.45\textwidth]{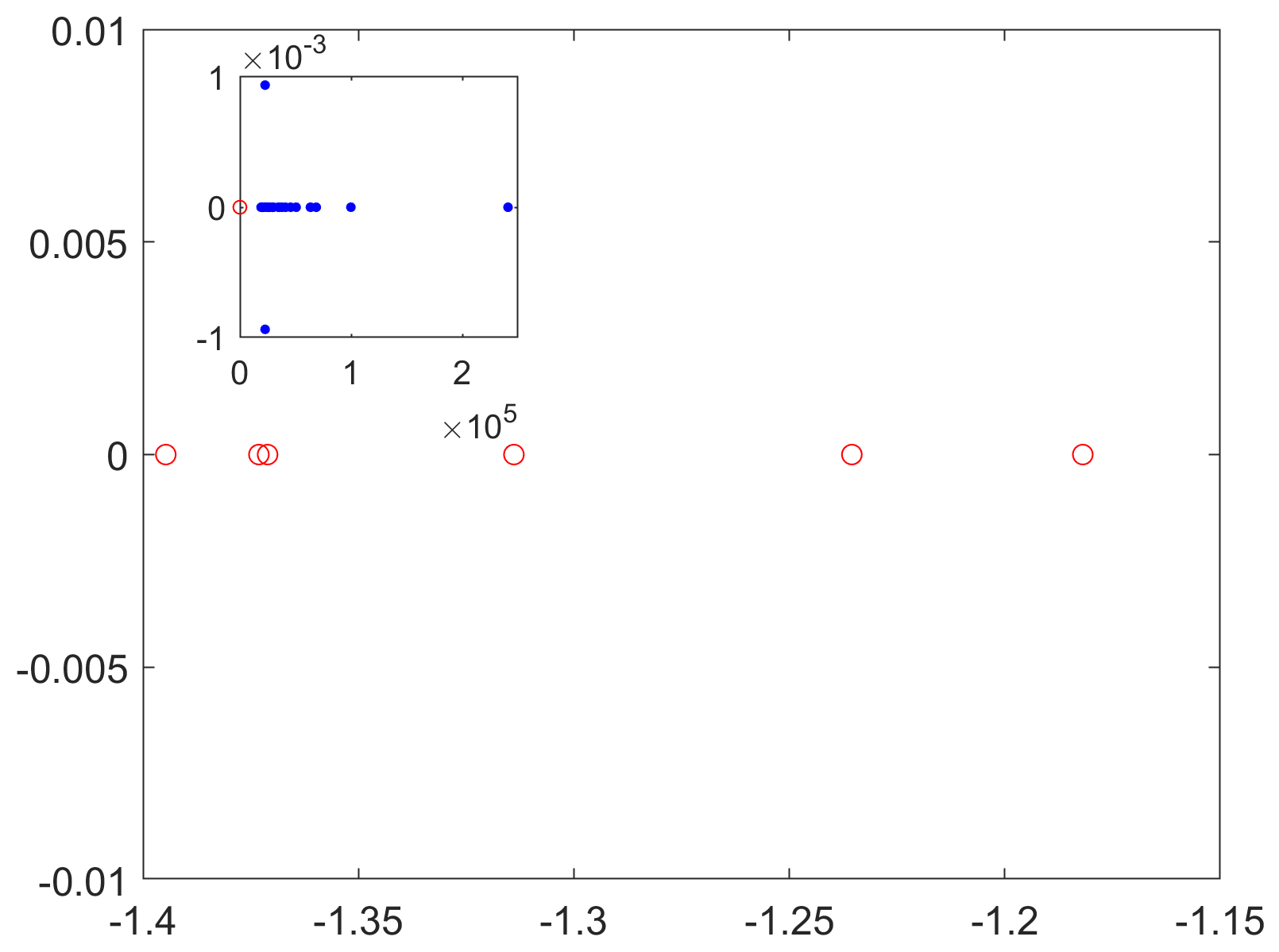}
	\caption{Selected eigenvalues of the matrices $\AlT\As$ (left) and $\AlT\Ai$ (right),
		for both the small (top) and large (bottom) test matrices.
		The red circles show the leftmost eigenvalues all of which have a negative real part.
		The inset figure shows these eigenvalues together with the 50 largest eigenvalues
		shown as blue dots.}
	\label{fig:eigenvalues}
\end{figure}

As mentioned in the Introduction, the standard SIRT iterative methods will not
converge when $\BA$ has complex eigenvalues with negative real part.
To demonstrate that this is the case for the discretizations used here,
Figure~\ref{fig:eigenvalues} shows the leftmost and the largest eigenvalues of $\BA$
computed by means of MATLAB's \texttt{eigs},
for two combinations of the small and large test matrices, namely,
$\AlT\As$ and $\AlT\Ai$.
In both cases there are indeed eigenvalues with negative real parts,
meaning that the SIRT methods do not converge.
Hence, it is natural to use the AB- and BA-GMRES methods for these matrices.
The leftmost eigenvalues of the third combination $\AsT\Ai$ have tiny negative
real parts.

\subsection{Error Histories}
\label{sec:eh}

\begin{figure}
	\centering
	$(\As,\AiT)$ \qquad\qquad\qquad Small matrices \qquad\qquad\qquad $(\As,\AlT)$ \\
	\includegraphics[width=0.45\textwidth]{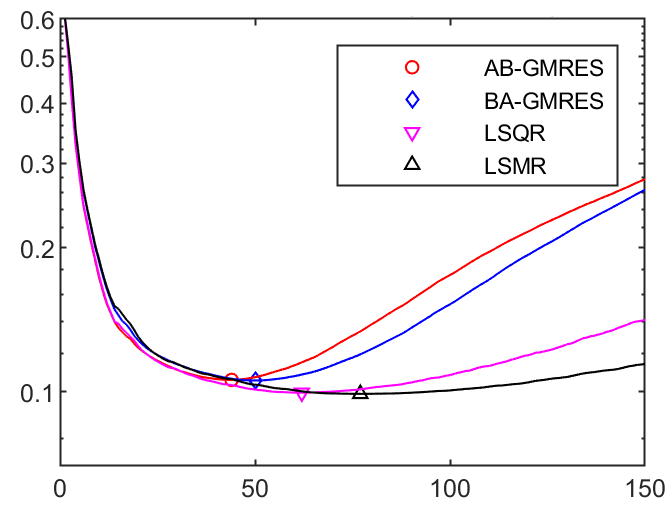} \quad
	\includegraphics[width=0.45\textwidth]{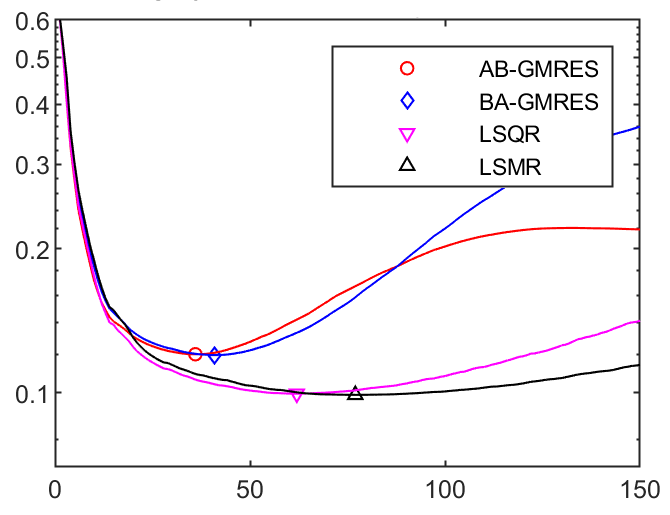} \\[3mm]
	$(\As,\AiT)$ \qquad\qquad\qquad Large matrices \qquad\qquad\qquad $(\As,\AlT)$ \\
	\includegraphics[width=0.45\textwidth]{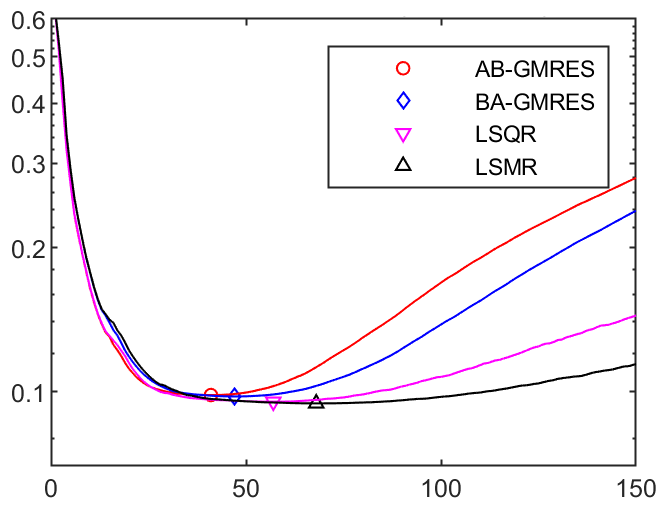} \quad
	\includegraphics[width=0.45\textwidth]{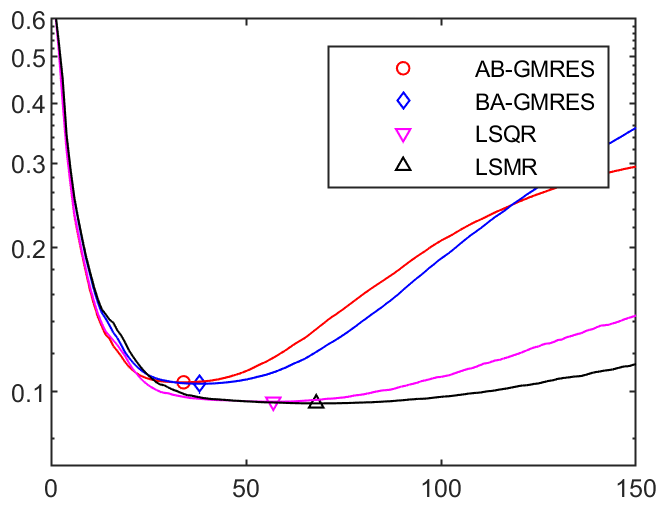}
	\caption{Error histories, i.e., plots of the relative reconstruction error
		$\|\bar{x}-x_k\|_2/\|\bar{x}\|_2$ versus the number of iterations.
		The minima are indicated by the markers.
		We show results for the cases $(A,B) = (\As,\AiT)$ in the left plots,
		and $(A,B) = (\As,\AlT)$ in the right plots.
		The top and bottom plots are for small and large matrices, respectively.
		The noise level is $\| e \|_2 / \| \bar{b} \|_2 = 0.003$.}
	\label{fig:histories}
\end{figure}

\begin{figure}
	\centering
	\includegraphics[width=0.9\textwidth]{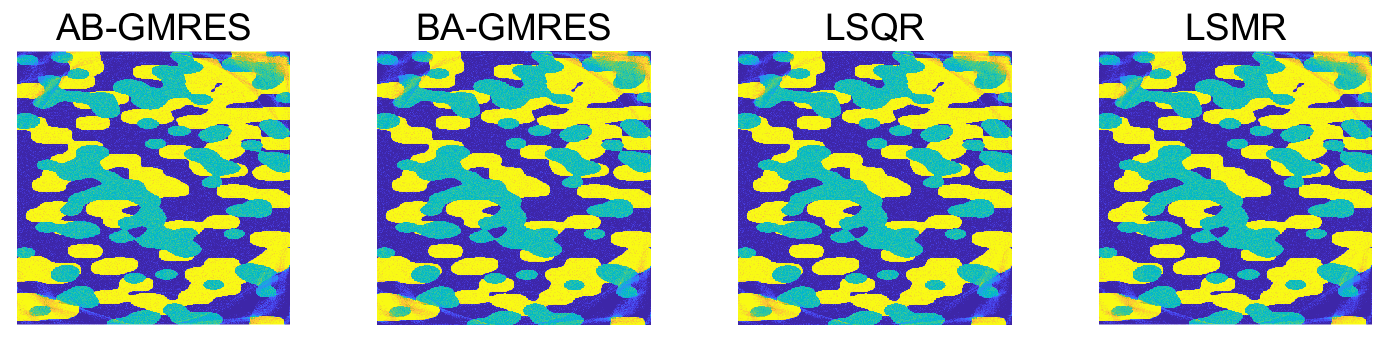}
	\caption{Reconstructions for the pair $(A,B) = (\As,\AlT)$ at the
		point of semi-convergence, i.e., the minima of the curves in the top
		right plot of Figure \ref{fig:histories}.
		Pixel values below zero and above one are set to zero and one, respectively.
		The slight artifacts near the corners are always present, and they are due
		to the scanning geometry with few X-ray penetrating these regions.}
	\label{fig:SLlarge}
\end{figure}

\begin{table}
	\caption{\label{table:recerr}
		For each algorithm and each $(A,B)$ pair, the numbers to the left and right
		in each block column are results for large and small test matrices, respectively.
		We list the reconstruction error, the number of iterations (in parenthesis),
		and on the second line the required storage.
		Note that the results for LSQR and LSMR are independent of $B$.
		See Table~\ref{table:diffAA} for the norm-wise differences between the matrices.}
	\medskip
	\scriptsize
	\centering
	\renewcommand{\arraystretch}{1.5}\setlength{\tabcolsep}{3pt}
	\begin{tabular}{|c|rr|rr|cc|cc|} \hline
		Pair & \multicolumn{2}{c|}{AB-GMRES} & \multicolumn{2}{c|}{BA-GMRES} &
		\multicolumn{2}{c|}{LSQR} & \multicolumn{2}{c|}{LSMR} \\ \hline
		$(\As,\AlT)$ & 0.1047 (34) & 0.1203 (36) & 0.1039 (38) & 0.1199 (41)
		& 0.0954 (57) & 0.0996 (62) & 0.0946 (68) & 0.0990 (77) \\
		& 8568000 & 829440 & 6703200 & 671744 & & & & \\ 
		$(\As,\AiT)$ & 0.0985 (41) & 0.1059 (42) & 0.0978 (47) & 0.1056 (50)
		& 0.0954 (53) & 0.0996 (61) & 0.0946 (63) & 0.0990 (77) \\
		& 10332000 & 967680 & 8290800 & 819200 & & & & \\ \hline
		$(\Al,\AsT)$ & 0.0878 (61) & 0.0926 (77) & 0.0871 (74) & 0.0921 (85)
		& 0.0896 (52) & 0.0879 (64) & 0.0875 (63) & 0.0795 (80) \\
		& 15372000 & 1774080 & 13053600 & 1392640 & & & & \\ 
		$(\Al,\AiT)$ & 0.0899 (50) & 0.0927 (60) & 0.0894 (60) & 0.0922 (69)
		& 0.0896 (52) & 0.0870 (64) & 0.0875 (63) & 0.0795 (80) \\
		& 12600000 & 1382400 & 10584000 & 1130496 & & & & \\ \hline
		$(\Ai,\AlT)$ & 0.1005 (39) & 0.1112 (42) & 0.0998 (44) & 0.1105 (47)
		& 0.0935 (53) & 0.0948 (61) & 0.0928 (63) & 0.0940 (77) \\
		& 9828000 & 967680 & 7761600 & 770048 & & & & \\ 
		$(\Ai,\AsT)$ & 0.0915 (55) & 0.0930 (68) & 0.0901 (67) & 0.0920 (84)
		& 0.0935 (53) & 0.0948 (61) & 0.0928 (63) & 0.0940 (77) \\
		& 13860000 & 1566720 & 11818800 & 1376256 & & & & \\ \hline
	\end{tabular}
\end{table}

To illustrate the semi-convergence we use both AB-GMRES and BA-GMRES
to solve systems with small and large test matrices and with
the noise level $\| e \|_2 / \| \bar{b} \|_2 = 0.003$.
We tried all six pairs with $B\neq A$, and for comparison we also used LSQR and LSMR
(which correspond to the case $B = A^\trans$).
To check our implementations, we verified numerically that AB- and BA-GMRES
with $B = A^\trans$ give the same results LSQR and LSMR (we do not show there results here).
Figure \ref{fig:histories} shows the error histories, i.e., the
relative reconstruction error
$\|\bar{x}-x_k\|_2/\|\bar{x}\|_2$ versus the number of iterations for
the cases $(A,B) = (\As,\AiT)$ and $(A,B) = (\As,\AlT)$.
Figure \ref{fig:SLlarge} shows the reconstructions for the case
$(A,B) = (\As,\AlT)$.
These results are representative for all six pairs of matrices.

When we refer to the ``reconstruction error,'' we mean the relative error
at the point of semi-convergence (i.e., the minimum of the error histories)
indicated by the markers in the plots.
The reconstruction errors for all $(A,B)$ pairs are shown in Table~\ref{table:recerr}.
For each of the six $(A,B)$ pairs we observe the following:

\begin{itemize}
	\item
	We obtain almost the same reconstruction error for AB-GMRES and BA-GMRES,
	cf.\ block columns 2 and 3.
	\item
	We obtain almost the same reconstruction error for LSQR and LSMR,
	cf.\ block  columns 4 and 5.
	\item
	When $A=\As$ then LSQR and LSMR give slightly smaller reconstruction errors
	than AB- and BA-GMRES, cf.\ the top block row as well as Figure \ref{fig:histories}.
	\item
	For the other two $A$ matrices, all four methods give almost the same
	reconstruction errors, cf. the middle and bottom block rows.
	\item
	The pairs $(A,B) = (\Al,\AsT)$ and $(A,B) = (\Al,\AiT)$
	give marginally more accurate reconstructions than the other pairs,
	cf.\ the middle block row.
	\item
	Often, AB-GMRES uses just slightly fewer iterations than BA-GMRES -- and occasionally
	is uses significantly less iterations.
	\item
	LSQR always uses less iterations than LSMR.
\end{itemize}
We also carried out experiments with under-determined problems (which are
not documented here). The conclusions remain the same.

Table \ref{table:recerr} also lists the required amount of storage for the orthogonal
$q_k$-vectors which are of length $m$ and $n$ for AB-GMRES and BA-GMRES, respectively.
For the overdetermined systems used here with $m \approx 1.4 n$,
in spite of BA-GMRES consistently using more iterations than AB-GMRES,
BA-GMRES requires less storage due to the shorter $q_k$-vectors.
For underdetermined systems (not reported here), AB-GMRES has the
advantage of less iterations and shorter $q_k$-vectors,
cf.\ \cite[p.~2408]{HaYI10}.

We emphasize that the choice of $A$ and $B$ is dictated by the available software,
and therefore one may not always have a choice of the implementation used in $A$ and $B$.
Moreover, the above results are for matrices used in the ASTRA software
(which allows easy access to the matrices);
other packages may use different discretization methods.
We also stress that in these experiments we perform inverse crime, meaning that
the noise-free data is generated as $\bar{b} = A\,\bar{x}$; hence the data is
different for the three choices of $A$ meaning that we do not solve
precisely the same problem for each choice of~$A$.
Therefore, the above results provide important insight about the influence
of $B\neq A^\trans$,
but they do not determine what is the best choice of $A$ and $B$ for
a CT problem with real data and no inverse crime.

\subsection{Stopping Rules}
\label{sec:sr}

Here we demonstrate the use of two stopping rules that seek to terminate
the iterations at the point of semi-convergence.
\begin{itemize}
	\item
	The \textbf{discrepancy principle (DP)} \cite{DP} terminates the iterations as
	soon as the residual norm is smaller than the noise level:
	\begin{equation}
		k_{\mathrm{DP}} = \hbox{the smallest $k$ for which
			$\| b - A\, x_k \|_2 \leq \tau\, \| e \|_2$ \ .}
	\end{equation}
	Here, $\tau\geq 1$ is a ``safety factor'' that can be used when we have
	only a rough estimate of $\| e \|_2$.
	We use $\tau = 1$ and the exact value of~$\| e \|_2$.
	\item
	The \textbf{NCP criterion} uses the normalized cumulative
	periodogram to perform a spectral analysis
	of the residual vector $b - A\, x_k$, in order to
	identify when the residual is as close to being white noise as possible,
	which indicates that all available information has been extracted
	from the noisy data.
	See \cite[Section~2.3.3]{AIRtoolsII} and \cite[SectionII.D]{HaJR21} for details;
	MATLAB code is available from us.
\end{itemize}

\begin{figure}
	\centering
	AB-GMRES \qquad\qquad $(\As,\AlT)$ \qquad\qquad BA-GMRES \\
	\includegraphics[width=0.7\textwidth]{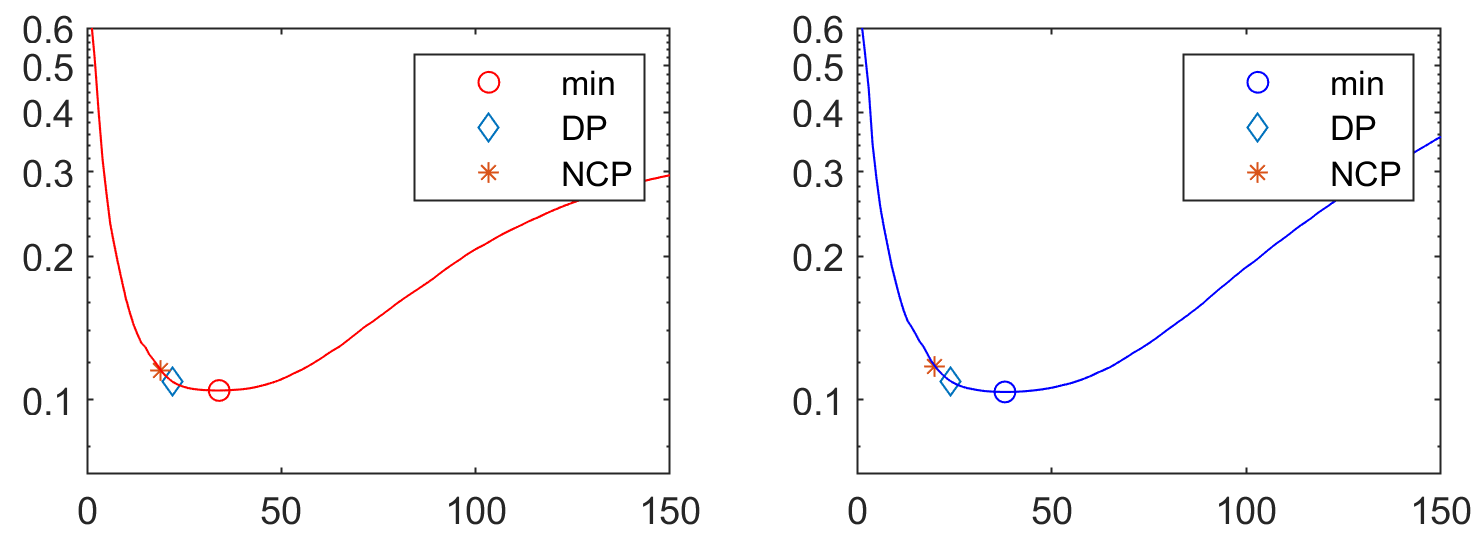} \\[2mm]
	AB-GMRES \qquad\qquad $(\As,\AiT)$ \qquad\qquad BA-GMRES \\
	\includegraphics[width=0.7\textwidth]{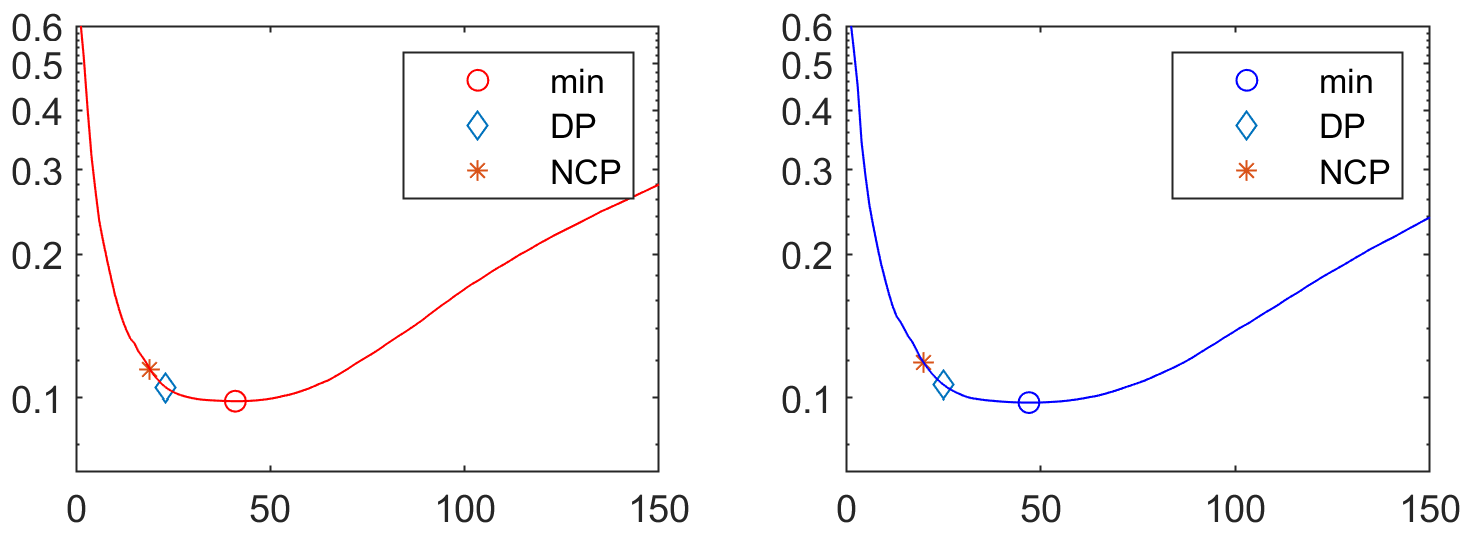} \\[2mm]
	LSQR \qquad\qquad\qquad $\As$ \qquad\qquad\qquad LSMR \\
	\includegraphics[width=0.7\textwidth]{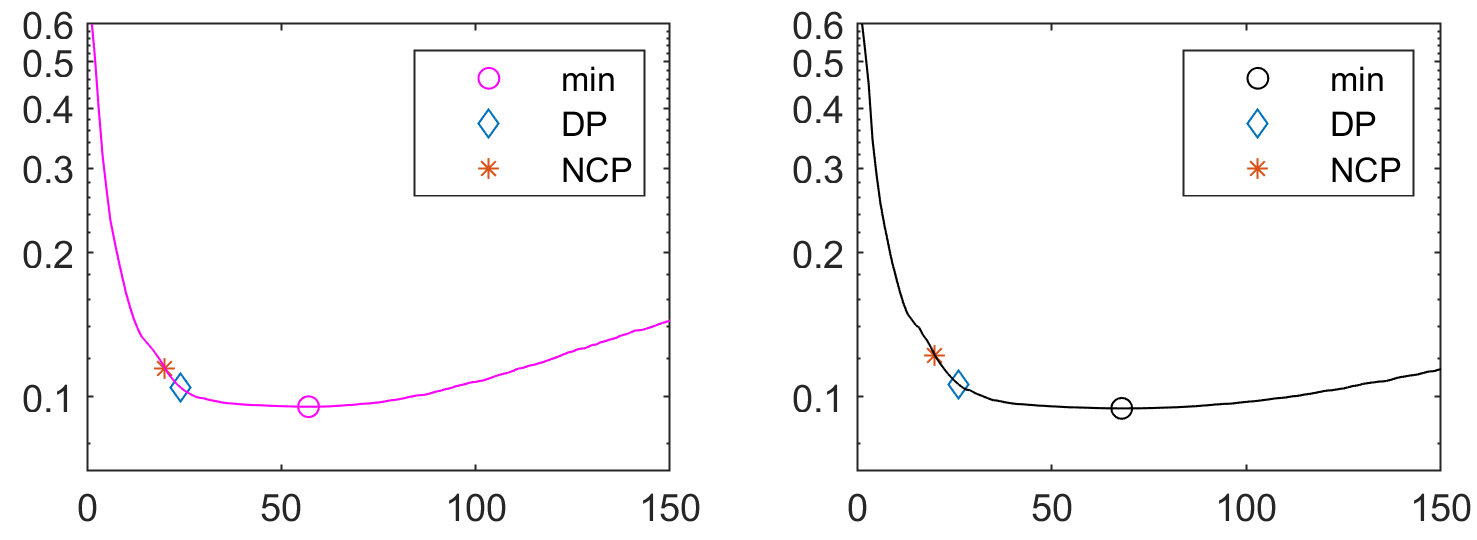}
	\caption{We show the error histories $\|\bar{x}-x_k\|_2/\|\bar{x}\|_2$
		versus the number of iterations.
		The markers indicate the minimum as well as the error at the
		number of iterations $k_{\mathrm{DP}}$ and $k_{\mathrm{NCP}}$
		found by the discrepancy principle and the NCP criterion, respectively.
		We show AB- and BA-GMRES results for the cases $(A,B) = (\As,\AiT)$ and
		$(A,B) = (\As,\AlT)$, as well as LSQR and LSMR results for $\As$,
		similarly to the case of large matices in
		Figure~\ref{fig:histories} and with the same
		noise level $\| e \|_2 / \| \bar{b} \|_2 = 0.003$.}
	\label{fig:stoprules}
\end{figure}

We apply these stopping rules to the same pairs of large matrices
$(A,B) = (\As,\AiT)$ and $(A,B) = (\As,\AlT)$ as in Figure~\ref{fig:histories},
with the same noise level $\| e \|_2 / \| \bar{b} \|_2 = 0.003$,
and the results are shown in Figure~\ref{fig:stoprules}.
We obtain similar results for the other $(A,B)$ pairs and hence they
are not shown here.
We make the following observations:
\begin{itemize}
	\item
	Both DP and NCP stop the iterations before the minimum is reached.
	This is better than stopping too late, in which case we would include
	undesired noise in the solution.
	\item
	Both DP and NCP stop the iterations when the error history starts to level off;
	the minimum of the error history is quite flat so this is acceptable.
	\item
	For unknown reasons, NCP always stops the iterations a bit earlier than DP.
\end{itemize}
We conclude that both stopping rules work well for this problem.
The DP stopping rule requires a good estimate of the noise level;
if this is not available (as is typical in CT problems)
then the performance of NCP is only slightly inferior to DP.

\subsection{SVD Analysis of Semi-Convergence}

\begin{figure}
	\centering
	\includegraphics[width=0.37\textwidth]{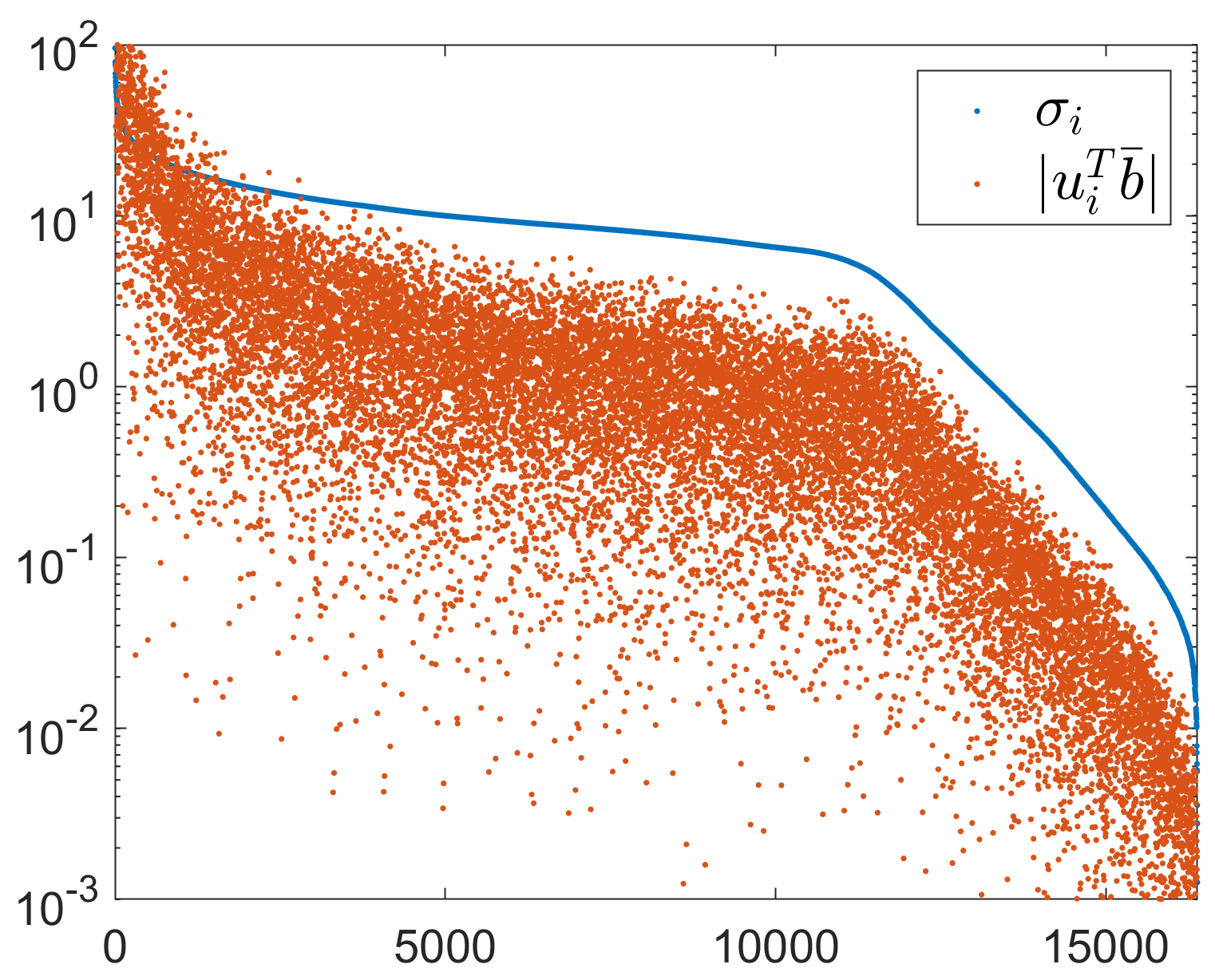} \hspace{2mm}
	\includegraphics[width=0.37\textwidth]{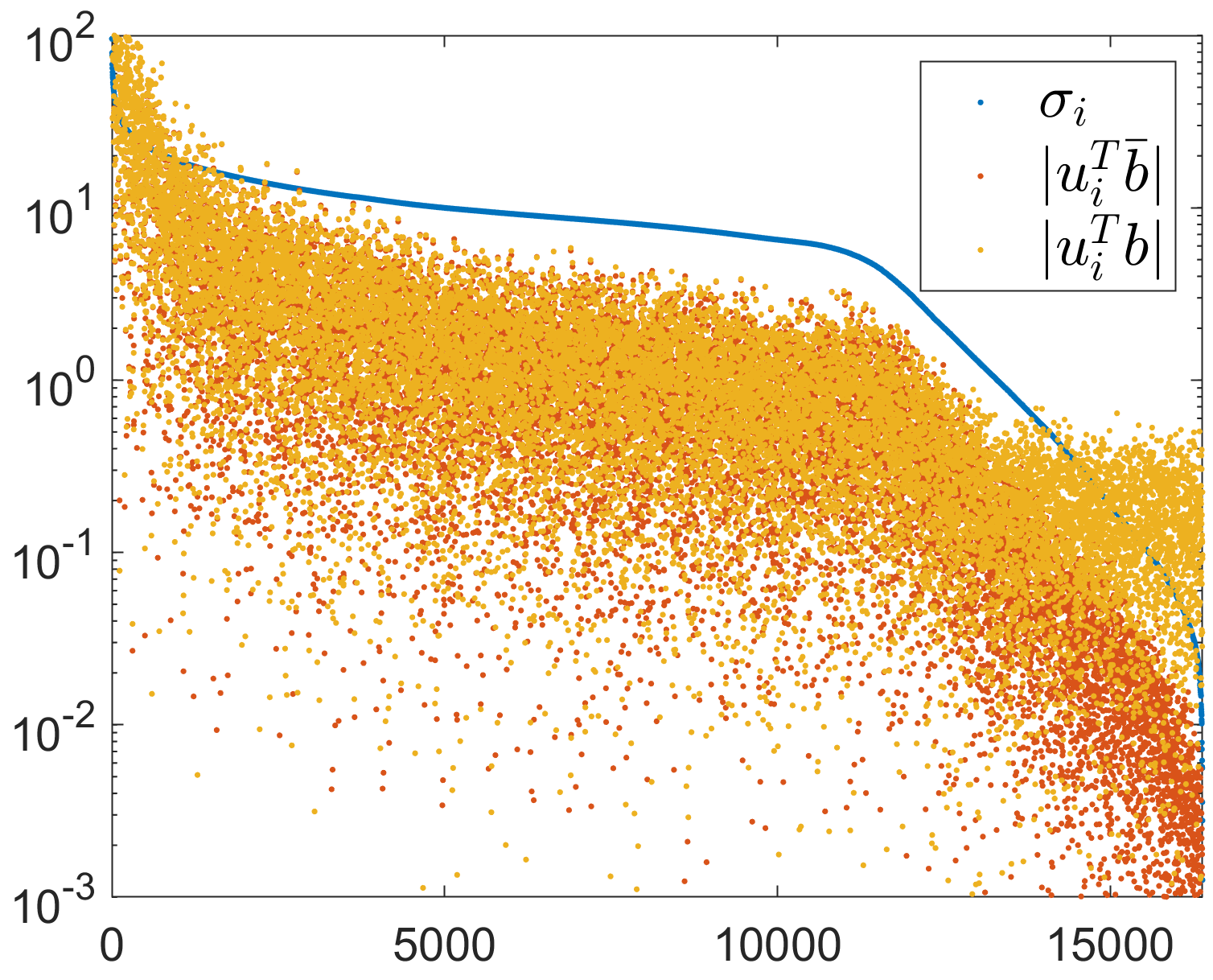}
	\caption{Plots of the singular values $\sigma_i$ (blue dots) and the right-hand side's
		SVD coefficients $|u_i^\trans\bar{b}|$ for the exact data (red dots)
		and $|u_i^\trans b|$ for noisy data (brown dots)
		with noise level $\| e \|_2 / \| \bar{b} \|_2 = 0.003$.
		The left plot shows only $\sigma_i$ and $|u_i^\trans\bar{b}|$ while the right
		plot shows all three quantities.}
	\label{fig:picard}
\end{figure}

\begin{figure}
	\centering
	\includegraphics[width=0.47\textwidth]{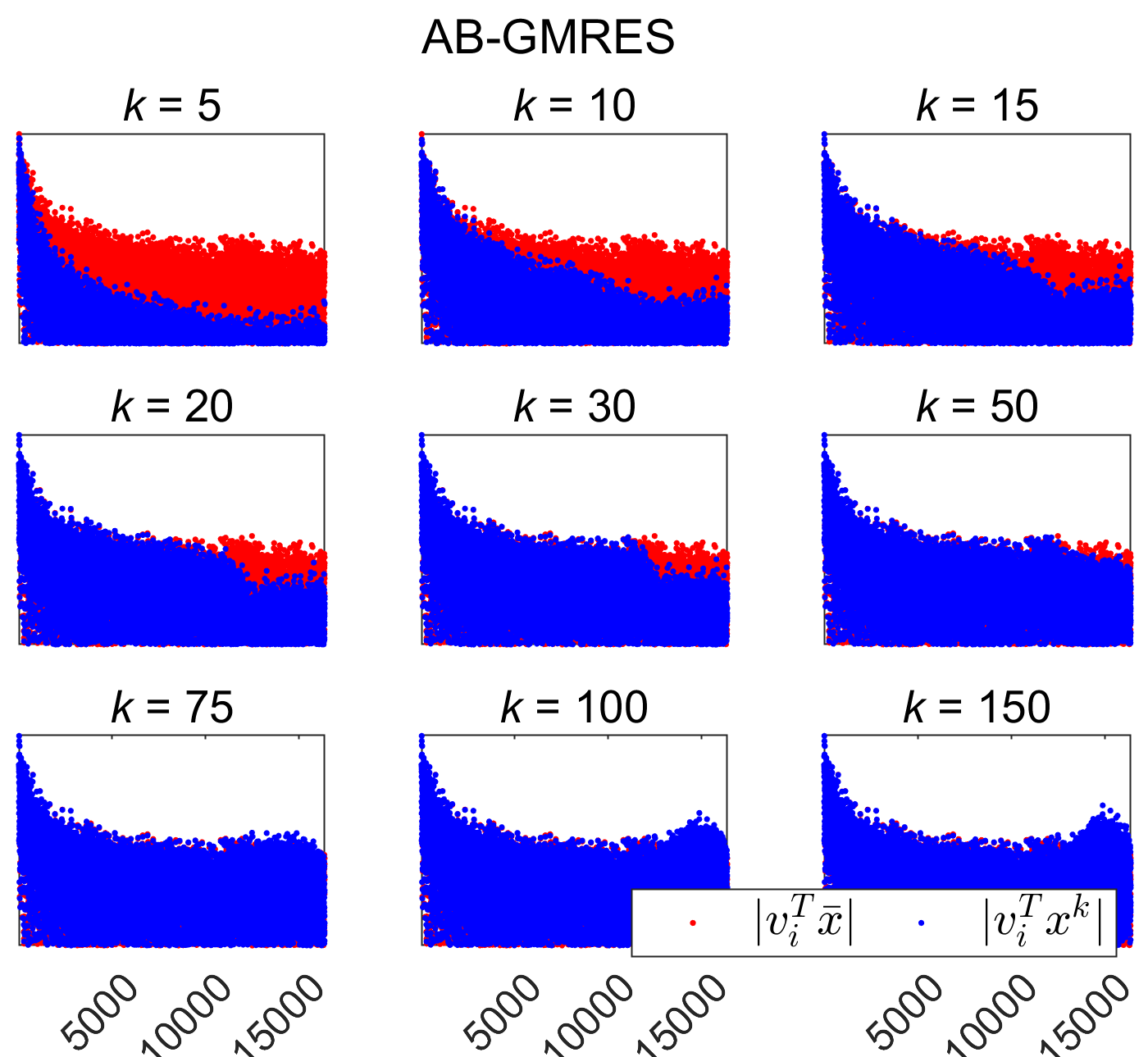} \qquad
	\includegraphics[width=0.47\textwidth]{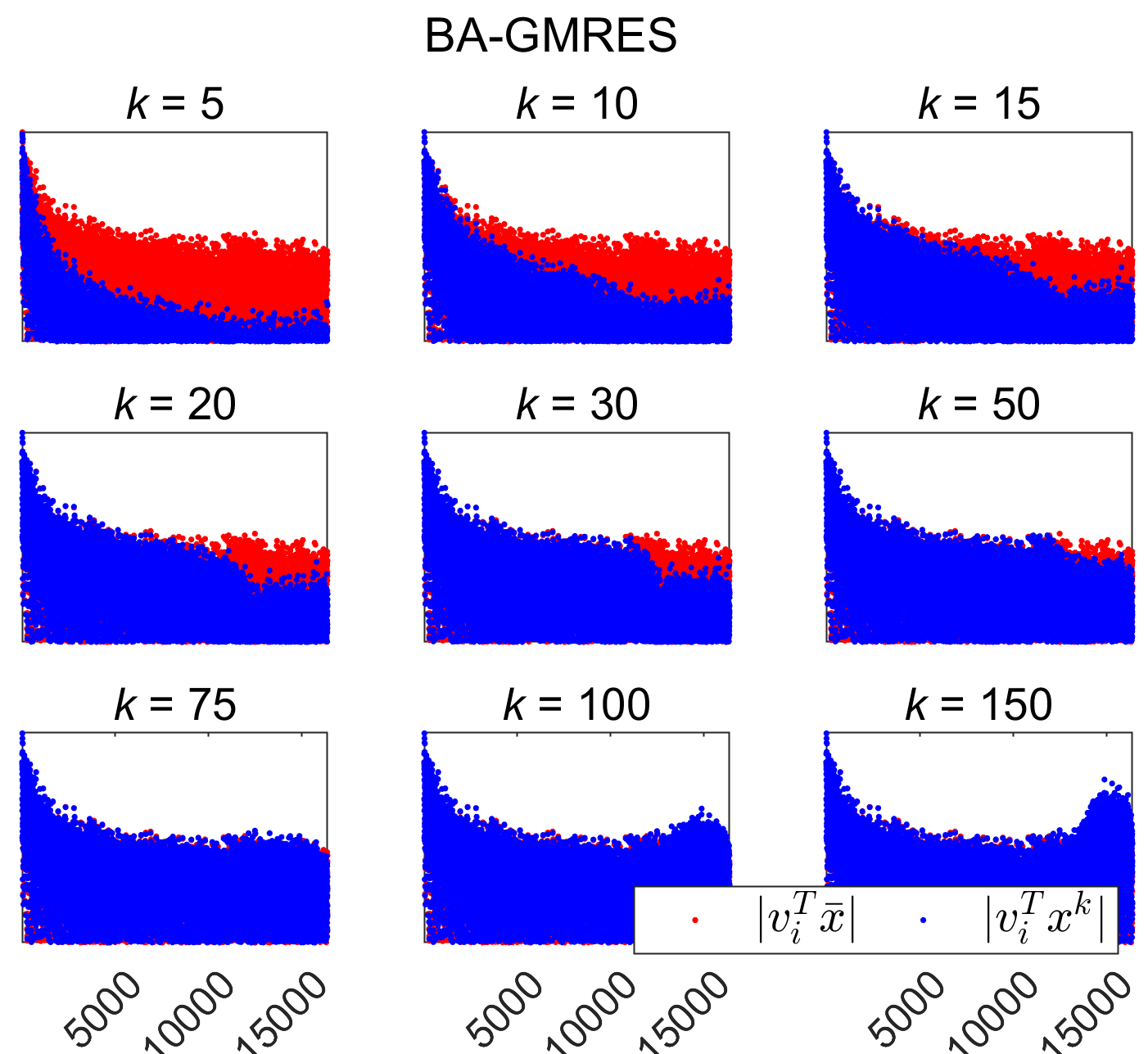} \\[3mm]
	\includegraphics[width=0.47\textwidth]{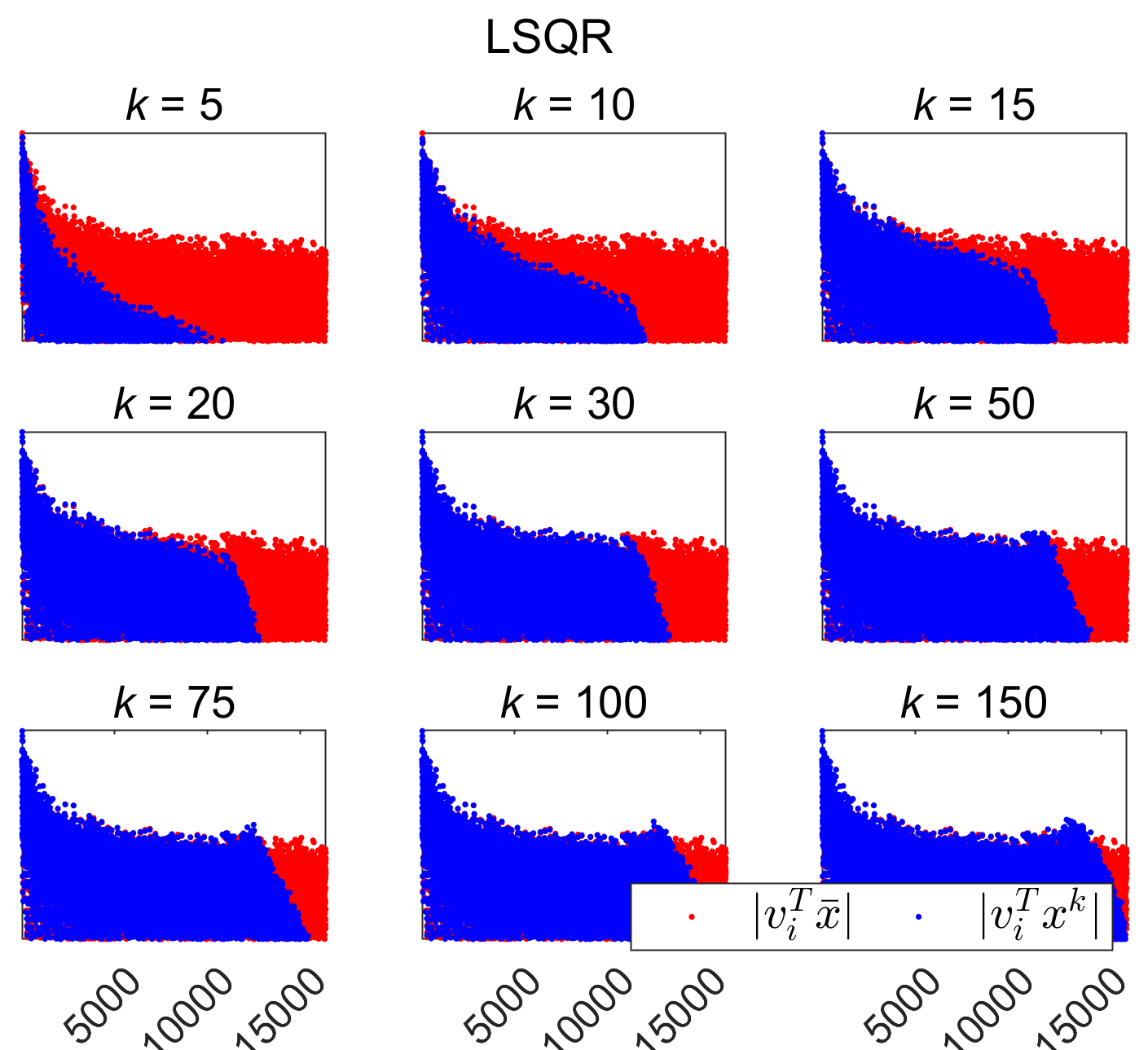} \qquad
	\includegraphics[width=0.47\textwidth]{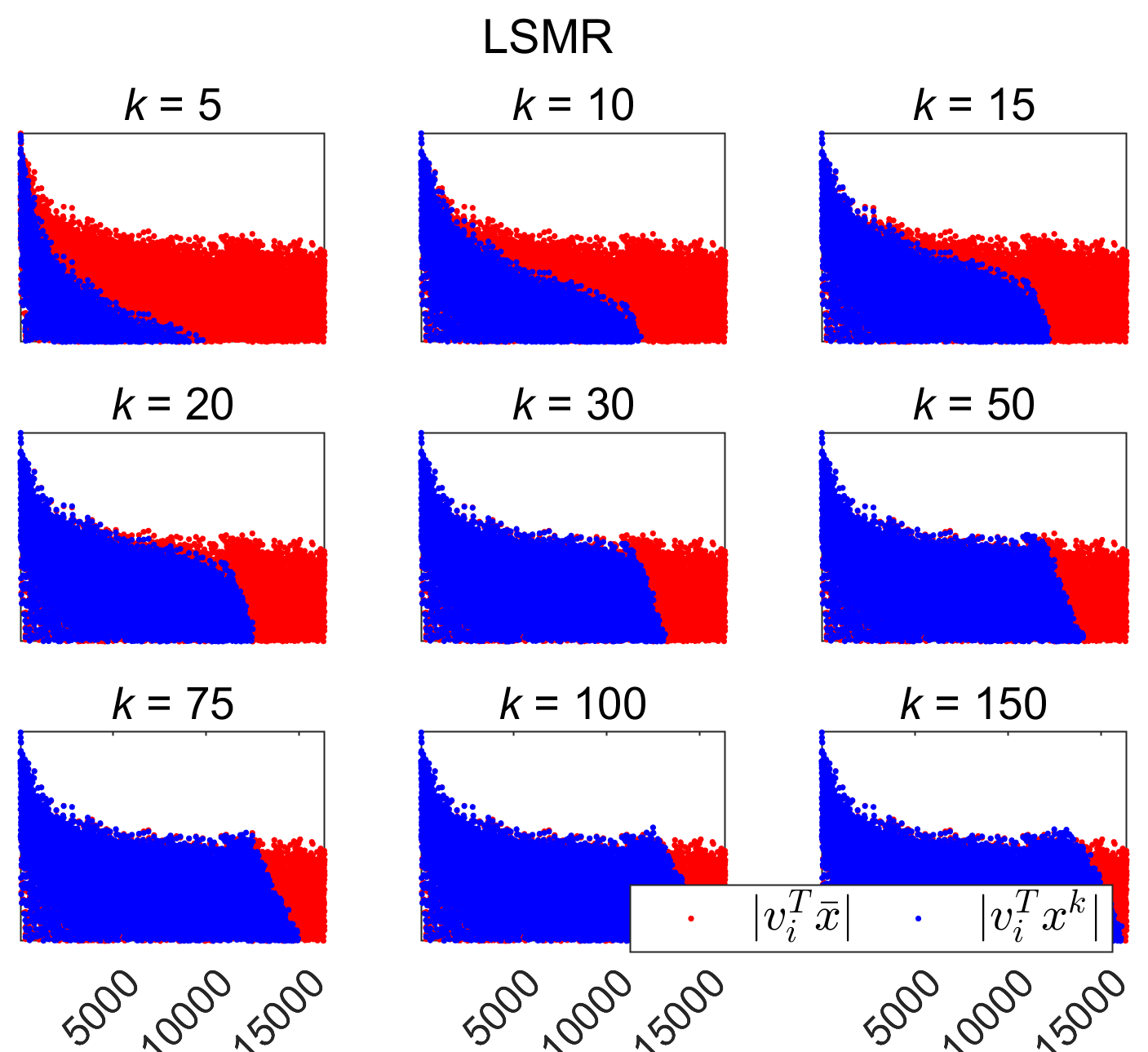}
	\caption{SVD analysis of the iterates $x_k$ for
		selected values of the iteration number~$k$, for the low noise level
		$\| e \|_2 / \| \bar{b} \|_2 = 0.003$.
		Red dots:\ SVD coefficients $|v_i^\trans \bar{x}|$ for the exact solution.
		Blue dots:\ SVD coefficients $|v_i^\trans x_k|$.}
	\label{fig:svd}
\end{figure}

More detailed insight into the semi-convergence can be obtained by
means of the SVD of $A$, and we present results for the small test matrices
$(A,B) = (\As,\AlT)$
(for the large matrices we were not able to compute enough SVD components
with MATLAB's \texttt{svds}).
Figure \ref{fig:picard} shows the singular values together with the
right-hand side's SVD coefficients
$|u_i^\trans\bar{b}|$ for the exact data and $|u_i^\trans b|$ for noisy data
with noise level $\| e \|_2 / \| \bar{b} \|_2 = 0.003$.
The behavior of the singular values is typical for discretizations of
CT problems -- all the large ones decay like those of the underlying
Radon transform while the smaller ones decay faster due to discretization effects.
We see that the exact data satisfy the DPC, i.e., they decay at least as
fast as the singular values.
We also clearly see the ``noise floors'' in the right plots of $|u_i^\trans b|$
around 0.5.

Figure \ref{fig:svd} shows, for selected values of~$k$,
the SVD coefficients $|v_i^\trans\bar{x}|$ (red) for the exact solution
together with the SVD coefficients $|v_i^\trans x_k|$ (blue) for
selected iterates $x_k$ with
noisy data with noise level $\| e \|_2 / \| \bar{b} \|_2 = 0.003$.
Again we use the small test matrices.
For all four methods, as $k$ increases we capture an increasing amount of
SVD components, and at $k=30$ we have already computed
good approximations of the first $10\,000$ exact SVD components.
As we perform further iterations we start to capture unwanted SVD components
associated with small singular values; these components are
influenced by noise causing the blue dots to form a ``bump'' that
lies distinctly above the red dots.
This is particularly pronounced for AB- and BA-GMRES after $k=100$ iterations.
These ``bumps'' are obviously not present for noise-free data (not shown here).

In Figure \ref{fig:svd} we also observe a distinctly different behavior of
LSQR and LSMR versus AB- and BA-GMRES\@.  As discussed, e.g.,
in \cite[Section 6.3.2]{DIPbook}, LSQR is a spectral filtering method
that produces filtered SVD solution conforming with \eqref{eq:fSs},
and the same is true for LSMR which uses the same Krylov subspace.
For the smaller singular values the filter factors behave as
$\phi_i^{(k)} = O(\sigma_i^2)$ meaning that they decay fast to zero,
causing the blue dots
to ``drop off'' very fast -- for example, at $k=30$ iterations
the LSQR and LSMR iterates contain practically no SVD coefficients
with index $i > 12\,000$.
As we perform more iterations, eventually a small ``bump'' starts to appear
for large indices $i>10\,000$;
it is clearly visible at $k=150$ but it is much less pronounced than
for AB- and BA-GMRES.

AB- and BA-GMRES, on the other hand, are not spectral filtering methods
of the form \eqref{eq:fSs}, due to the underlying Krylov subspace
$\mathcal{K}_k(\BA, \Bb)$,
and SVD coefficients for all $i = 1,\ldots,n$ are present in all iterations
(some larger and some smaller).
This causes the noise to enter the iterates faster, and hence the reconstruction
error (at the point of semi-convergence) tends to be larger for AB- and BA-GMRES.

\begin{figure}
	\centering
	\includegraphics[width=0.47\textwidth]{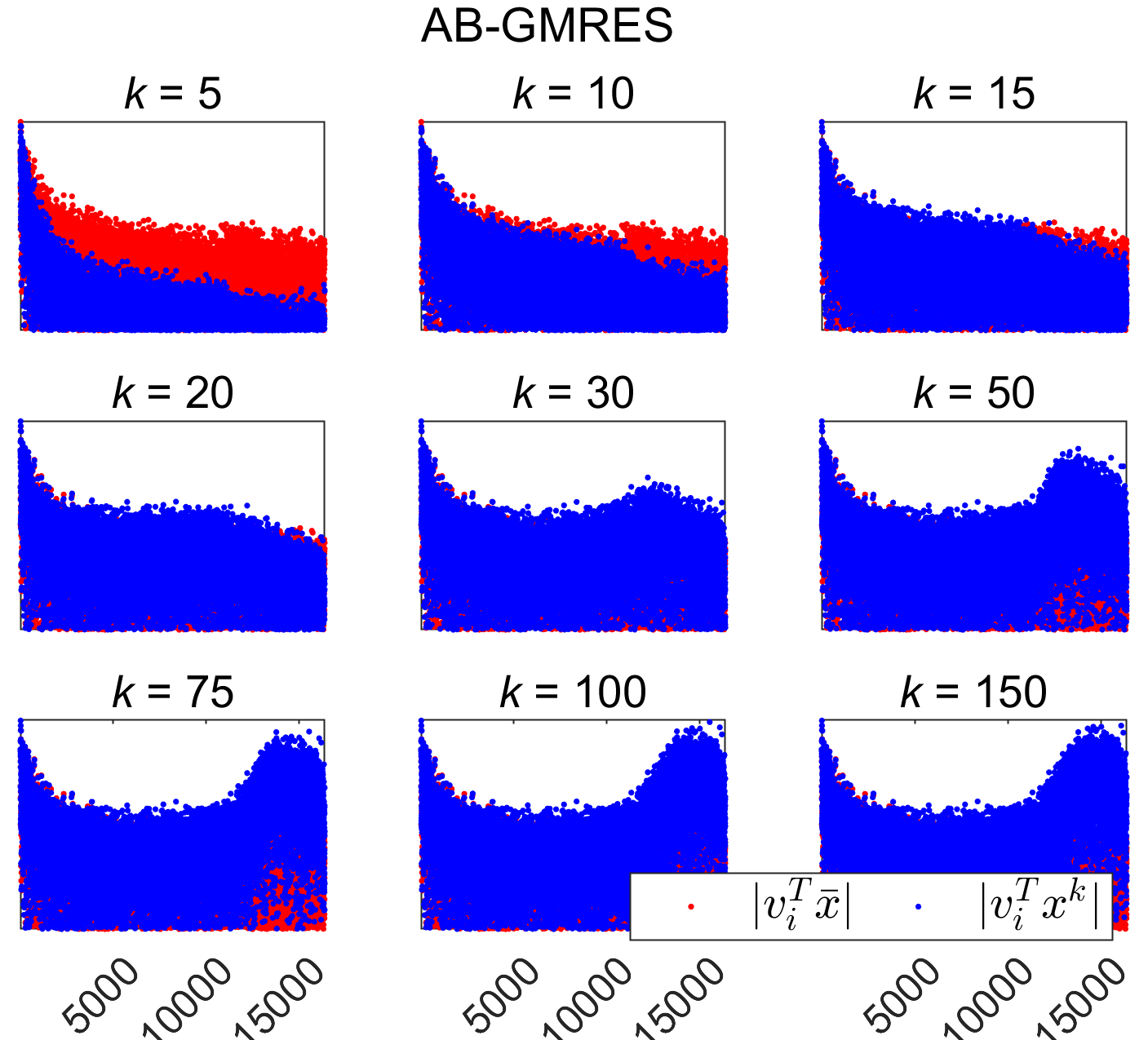} \qquad
	\includegraphics[width=0.47\textwidth]{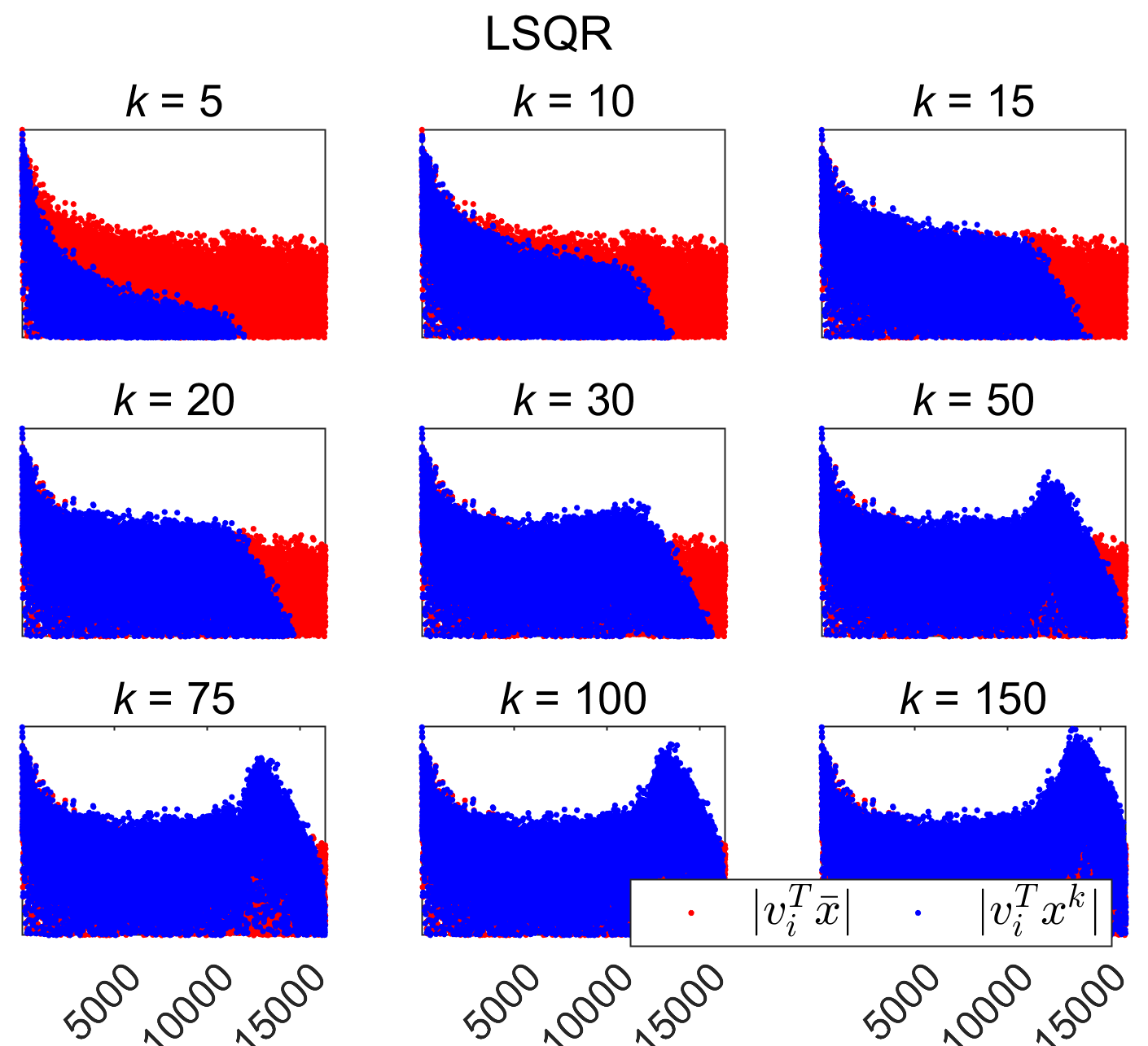}
	\caption{SVD analysis similar to Figure \ref{fig:svd} for the larger noise level
		$\| e \|_2 / \| \bar{b} \|_2 = 0.03$.}
	\label{fig:svdagain}
\end{figure}

Figure \ref{fig:svdagain} shows the SVD coefficients for AB-GMRES and LSQR,
similar to~Figure \ref{fig:svd}, for the larger noise level
$\| e \|_2 / \| \bar{b} \|_2 = 0.03$.
The overall behavior is similar to that in the previous figure, except that
the undesired ``bump'' now appears after only $k=30$ iterations.

\subsection{Varying the Back Projector's Unmatchedness}
\label{sec:unmatchedness}

We conclude with numerical experiments where we study the influence of the
back projector's \emph{unmatchedness}, as measured by
$\| B - A^\trans \|\fro / \| A \|\fro$.
So far we have considered matrices $A$ and $B$ generated by the ASTRA
software package, which are representative examples of the matrices
that we encounter with CT reconstruction software.
The disadvantage is that we cannot control ourselves how
much $B$ deviates from~$A^\trans$.
For this reason we introduce a new set of back projection matrices $B_\tau$
defined by neglecting elements of $A^\trans$ according to a threshold $\tau$.
Specifically, if $(\cdot)_{ij}$ denotes matrix elements then we define
\begin{equation}
	\label{eq:Btau}
	(B_\tau)_{ij} = \left\{ \begin{array}{ll}
		(A)_{ji} & \hbox{if} \quad (A)_{ji} \geq \tau\,\max_{ij}(A)_{ij} \\[1mm]
		0 & \hbox{else.} \end{array} \right.
\end{equation}
We do not need absolute values since all matrix elements are nonnegative.
When $\tau=0$ then $B=A^\trans$ is perfectly matched, and the larger the $\tau$
the more unmatched the~$B$.
We emphasize that these unmatched matrices do not represent actual
implementations of back projections in CT software packages.

\begin{figure}
	\centering
	\includegraphics[width=0.4\textwidth]{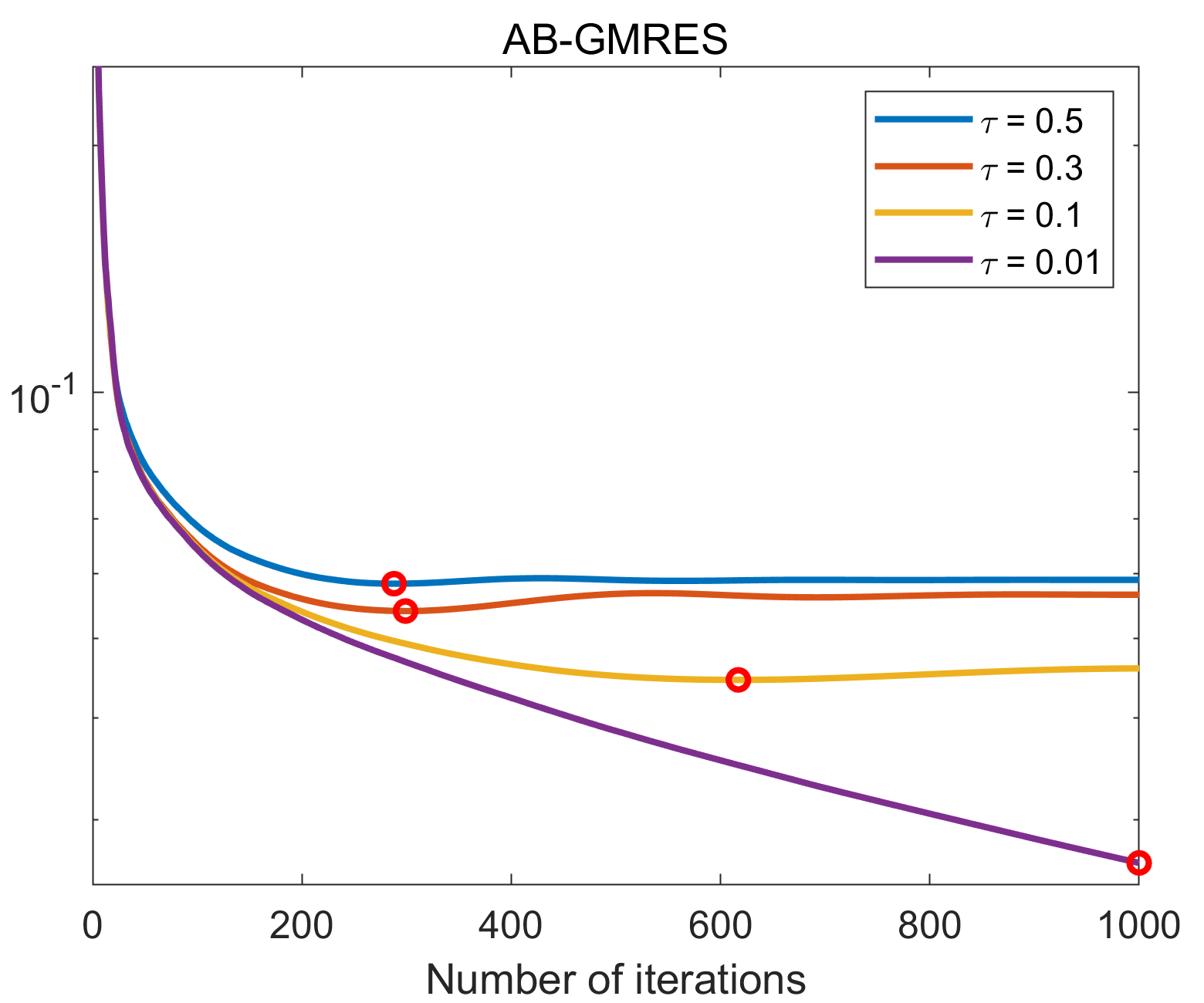} \quad
	\includegraphics[width=0.4\textwidth]{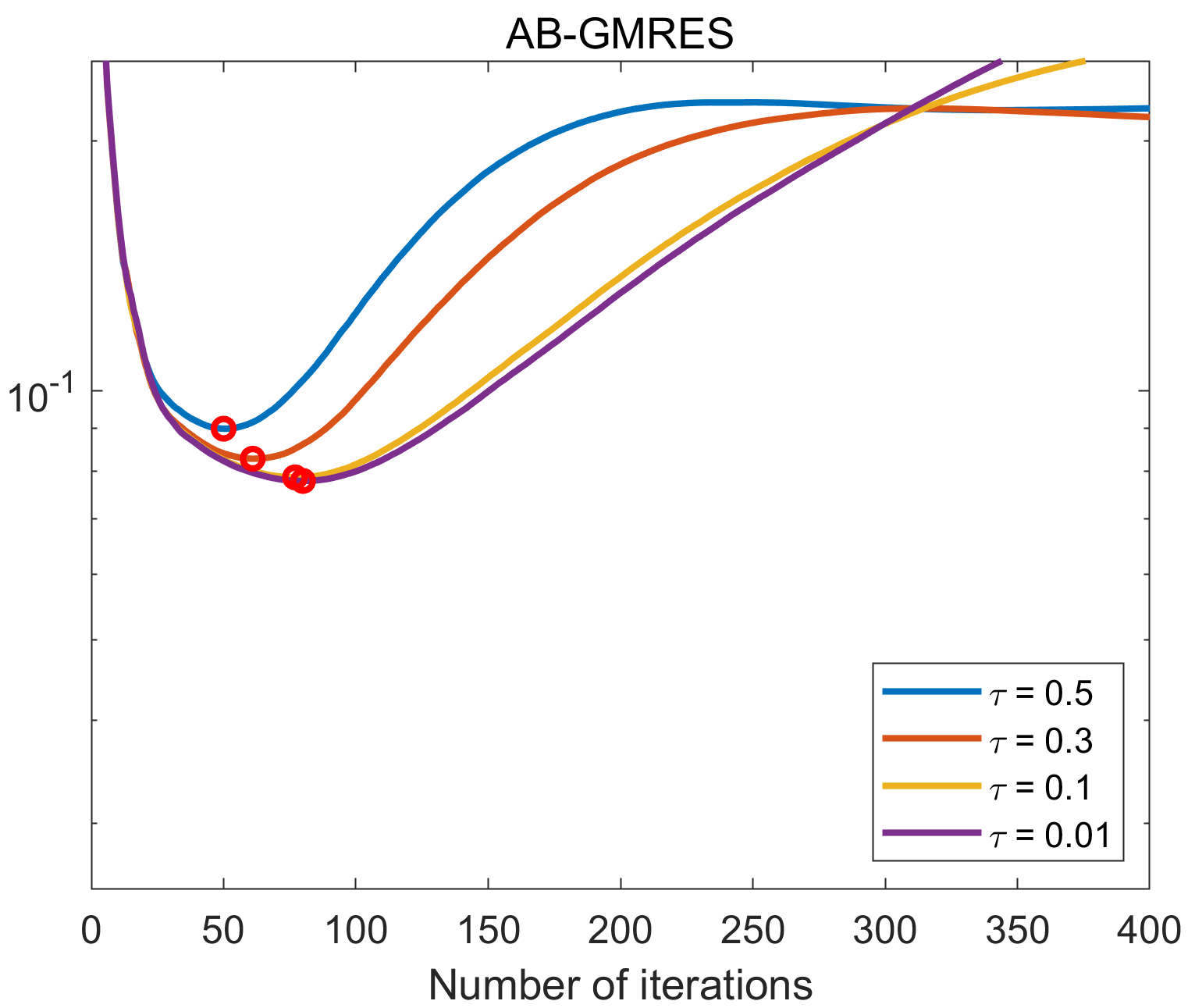} \\
	\includegraphics[width=0.4\textwidth]{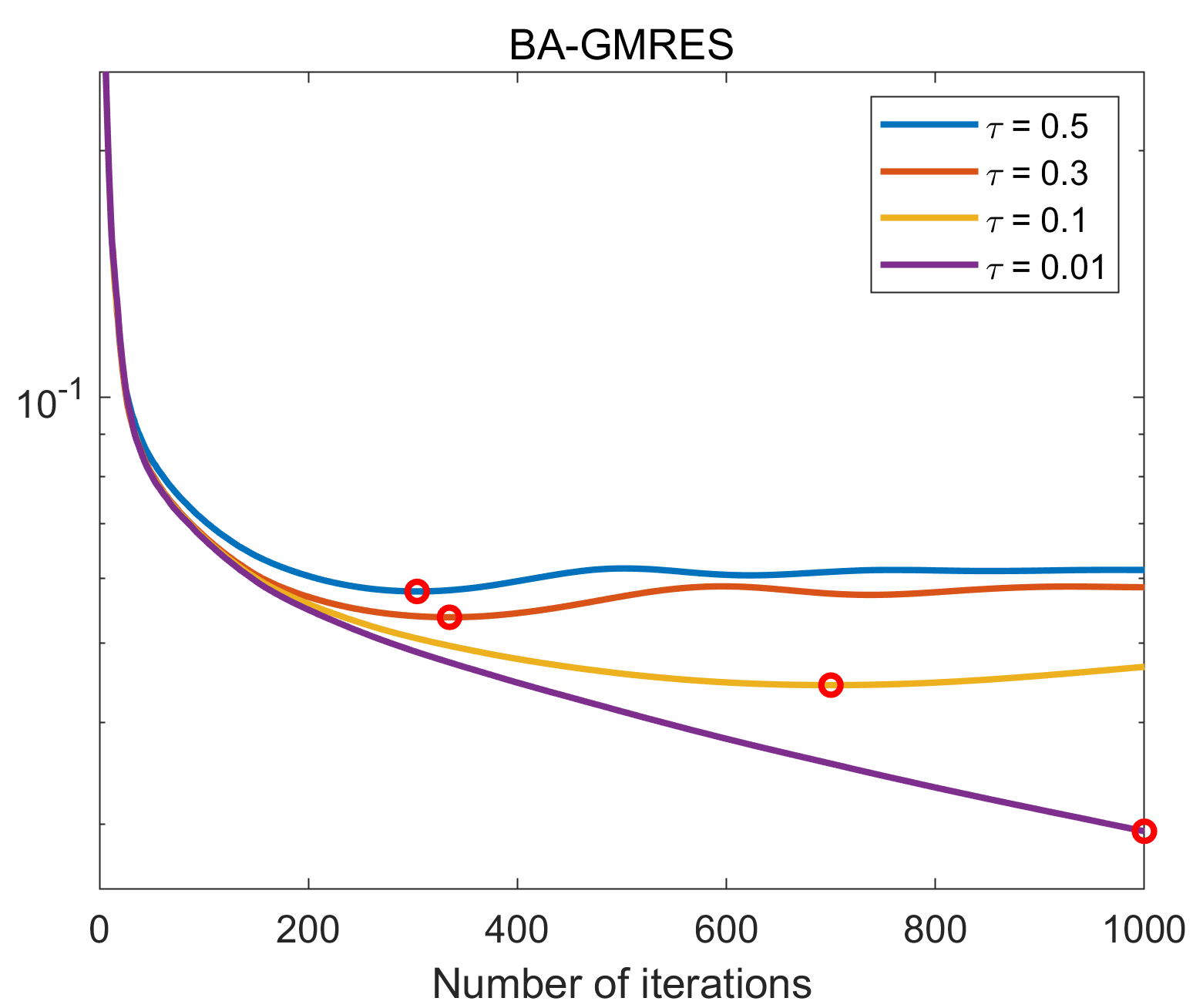} \quad
	\includegraphics[width=0.4\textwidth]{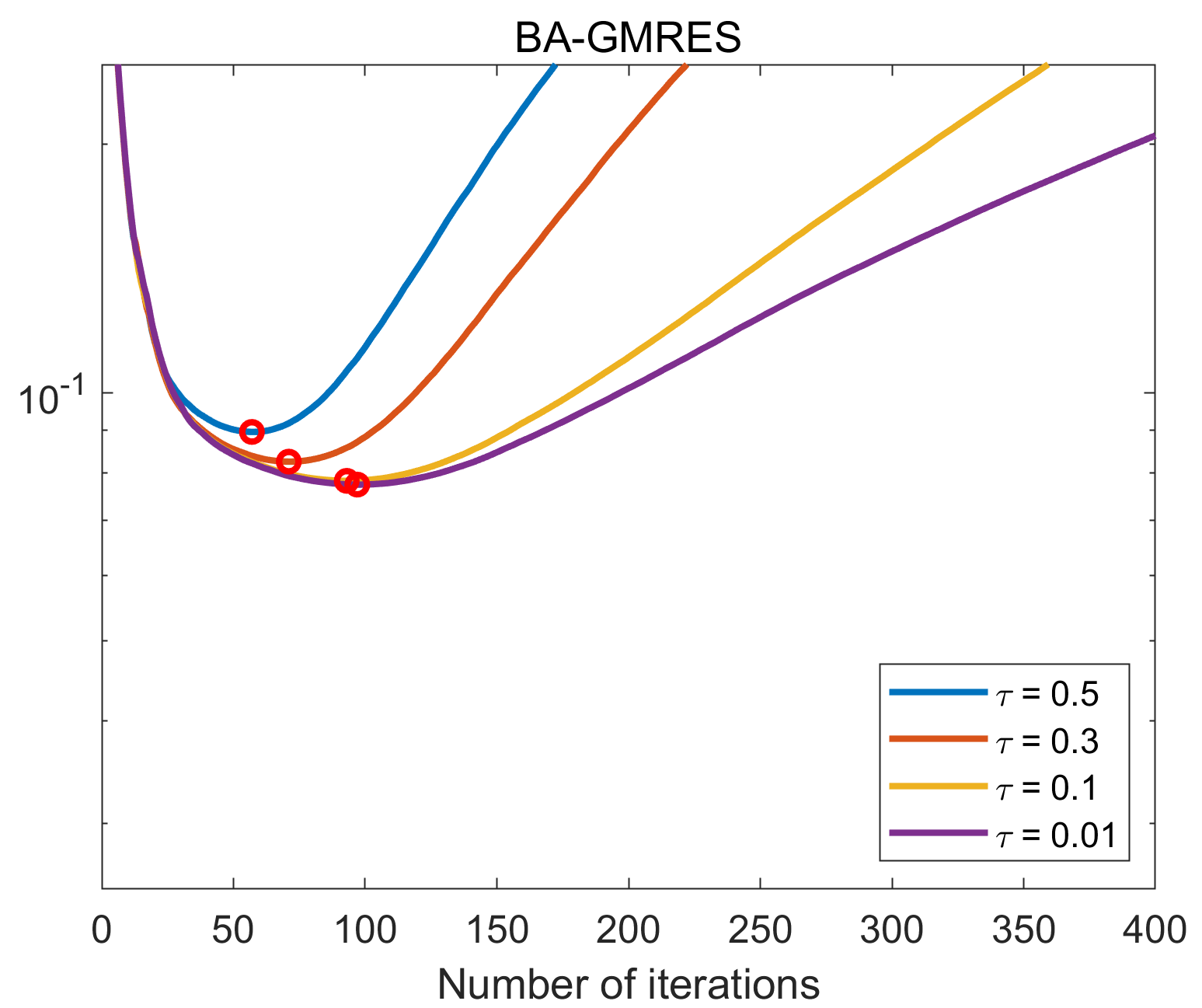}
	\caption{Error histories for four pairs $(\As,B_\tau)$ with
		four different values of $\tau$.
		The top and bottom plots are for the AB-GMRES and BA-GMRES methods, respectively.
		The left plots use a noise-free right-hand side $\bar{b}$ and the right
		plots are for noisy data $b=\bar{b}+e$ with $\| e \|_2 / \| \bar{b} \|_2 = 0.003$.
		The minima are marked with red circles.}
	\label{fig:InfluenceOfErorInBstrip}
\end{figure}

\begin{table}
	\caption{\label{tbl:unmatch}Corresponding values of $\tau$, the
		measure of unmatchedness.}
	\centering
	\renewcommand{\arraystretch}{1.3}
	\begin{tabular}{|c|ccccc|} \hline
		$\tau$ & 0.01 & 0.1 & 0.3 & 0.5  & comment \\ \hline
		\multirow{2}{*}{$\| B_\tau - \AsT \|\fro / \| \As \|\fro$}
		& 0.0021 & 0.0386 & 0.1640 & 0.3366 & small matrices \\
		& 0.0021 & 0.0387 & 0.1650 & 0.3397 & large matrices \\ \hline
	\end{tabular}
\end{table}

\begin{figure}
	\centering
	\includegraphics[width=0.5\textwidth]{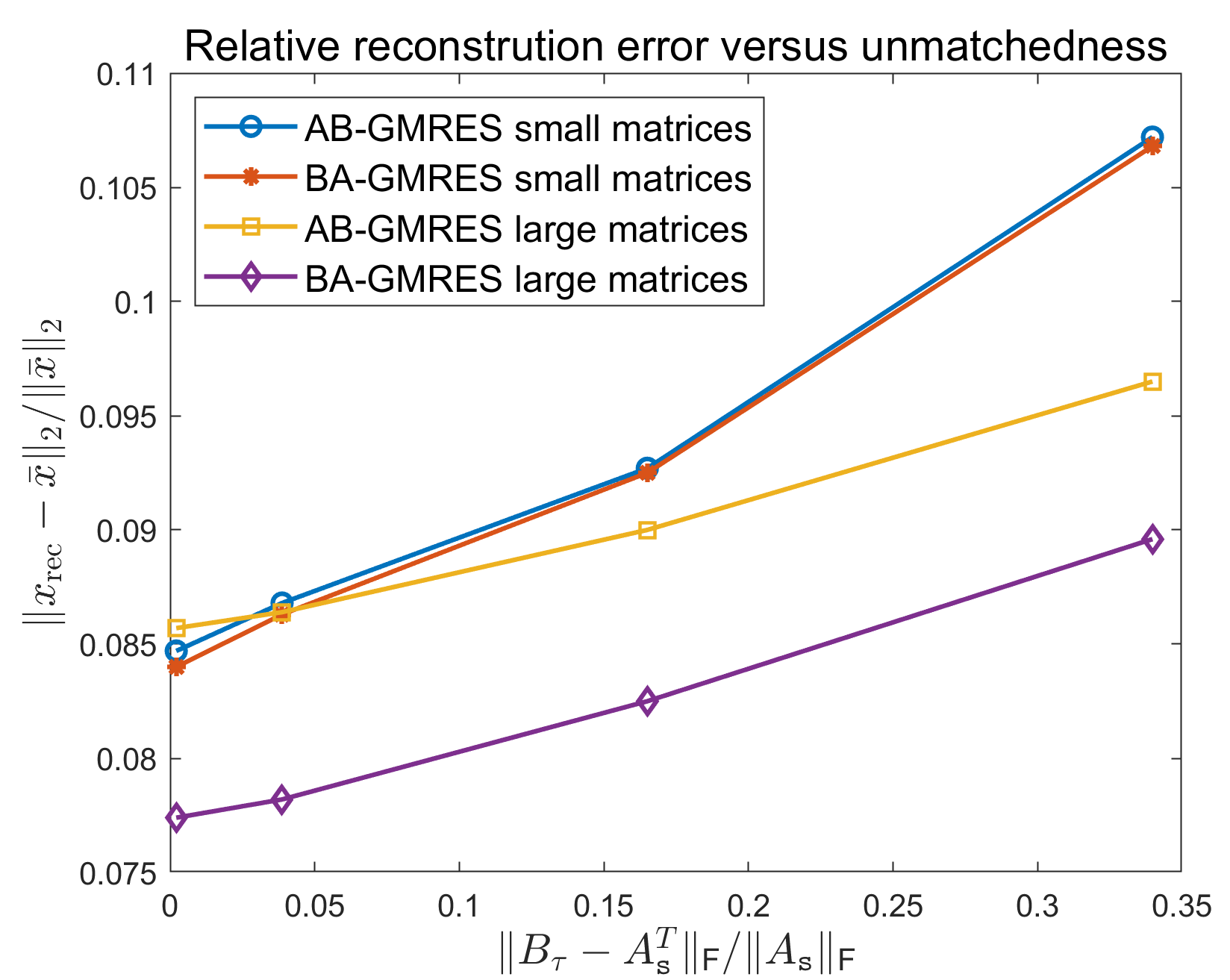}
	\caption{The relative reconstruction error versus the unmatchedness
		for the pair $(\As,B_\tau)$.}
	\label{fig:RREvsU}
\end{figure}

We focus on the large $252\,000 \times 176\,400$ test matrix $\As$
generated with the strip model.
Figure \ref{fig:InfluenceOfErorInBstrip} shows relative
error histories $\| \bar{x}-x_k \|_2 / \| \bar{x} \|_2$ for
both AB-GMRES and BA-GMRES, for difference choices of $\tau$,
see Table~\ref{tbl:unmatch}, and
with a noise-free as well as a noisy right-hand side with
$\| e \|_2 / \| \bar{b} \|_2 = 0.003$.
We make the following observations:
\begin{enumerate}
	\item
	Both methods converge to the solution $x_\tau^{\mathrm{UNE}}$ to the
	unmatched normal equations \eqref{eq:unmatchednormalequations}.
	When $B_\tau = \AsT$ and $e=0$ (no noise) then $x_\tau^{\mathrm{UNE}}$
	equals the ground truth $\bar{x}$,
	and they differ otherwise.
	The difference $x_\tau^{\mathrm{UNE}} - \bar{x}$ increases as $\tau$ increases
	and as $\| e \|_2$ increases.
	\item
	The reconstruction error
	consists of two components:\ the iteration error
	$x_\tau^{\mathrm{UNE}} - x_k$ and the error $\bar{x} - x_\tau^{\mathrm{UNE}}$
	caused by $B_\tau \neq A^\trans$.
	For small $\tau$ the first component dominates, while both components
	may contribute for large~$\tau$.
	\item
	For noise-free data the minimum error is entirely determined by the
	norm $\| B_\tau - \AsT \|\fro$.
	\item
	For noisy data, where we have semi-convergence,
	for all four values of~$\tau$ the minimum is mainly determined by the error $e$.
	\item
	For $\tau=0.5$ we have $\| B_\tau - \AsT \|\fro / \| \As \|\fro = 0.34$
	and the error histories
	resemble those for the pair $(\As,\AlT)$ in Figure~\ref{fig:histories}, for which
	$\| \As - \AlT \|\fro / \| \As \|\fro = 0.37$ (almost the same amount of unmatchedness).
	\item
	For $\tau=0.01$ we have that $B_\tau$ is almost a matched transpose, and
	the error histories resemble those for LSQR and LSMR in Figure~\ref{fig:histories}.
\end{enumerate}
These results confirm our theory, namely, that the behavior of AB-GMRES and
BA-GMRES resembles that of LSQR and LSMR, respectively, when $\tau \rightarrow 0$.
Moreover, for large $\tau$ the behavior of AB- and BA-GMRES with $B = B_\tau$
resembles that with the ASTRA matrices.

Figure \ref{fig:RREvsU} shows the relative reconstruction error
$\| x_{\mathrm{rec}} - \bar{x} \|_2 / \| \bar{x} \|_2$ versus the
unmatchedness, where $x_{\mathrm{rec}}$ is the iteration vector at the
minimum (the point of semi-convergence).
We observe a linear dependence between the two quantities.

\begin{figure}
	\centering
	----------- $\tau = 0.01$ ----------- \qquad
	----------- $\tau = 0.1$ ----------- \qquad  \\[3mm]
	\includegraphics[width=0.35\textwidth]{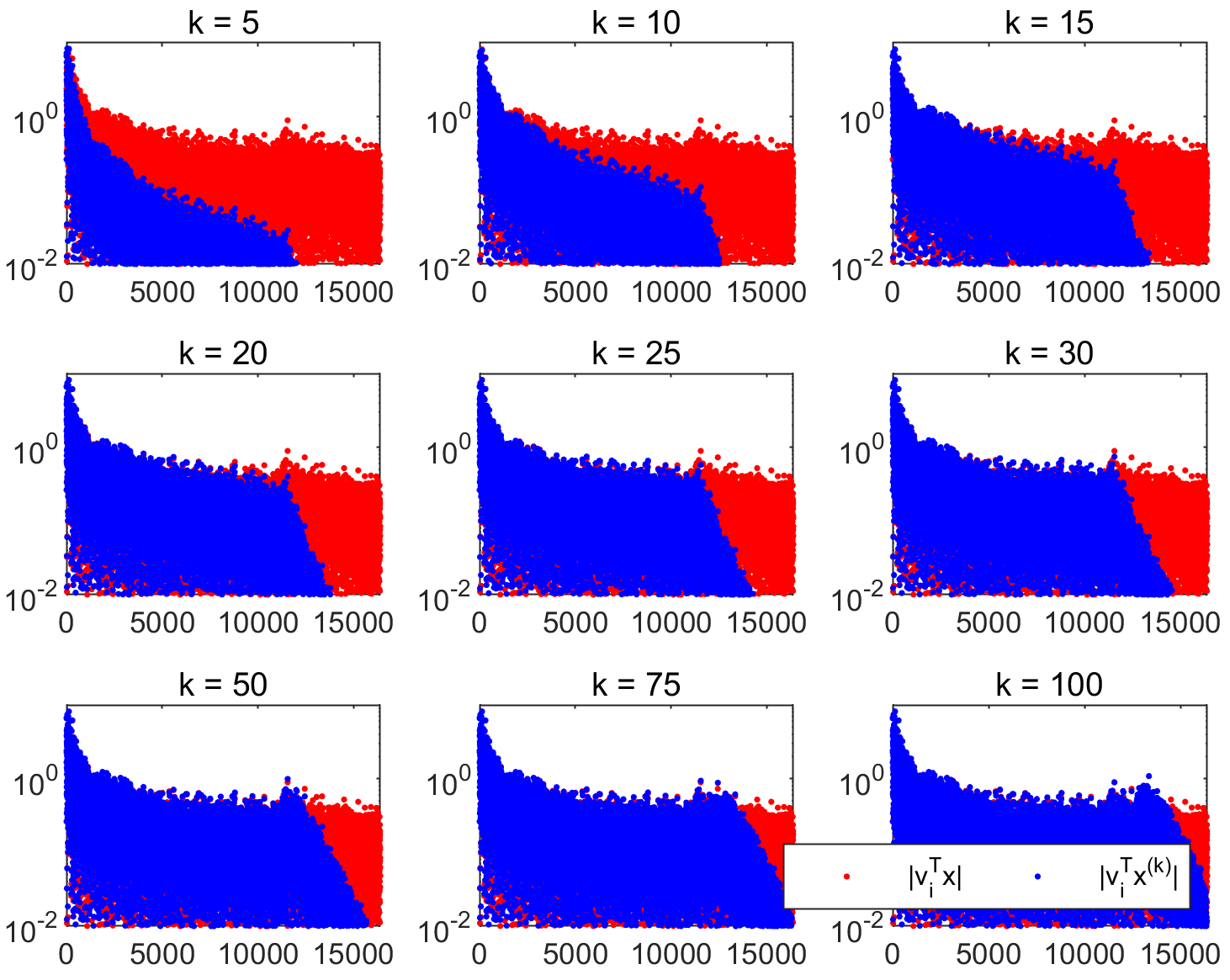}
	\includegraphics[width=0.35\textwidth]{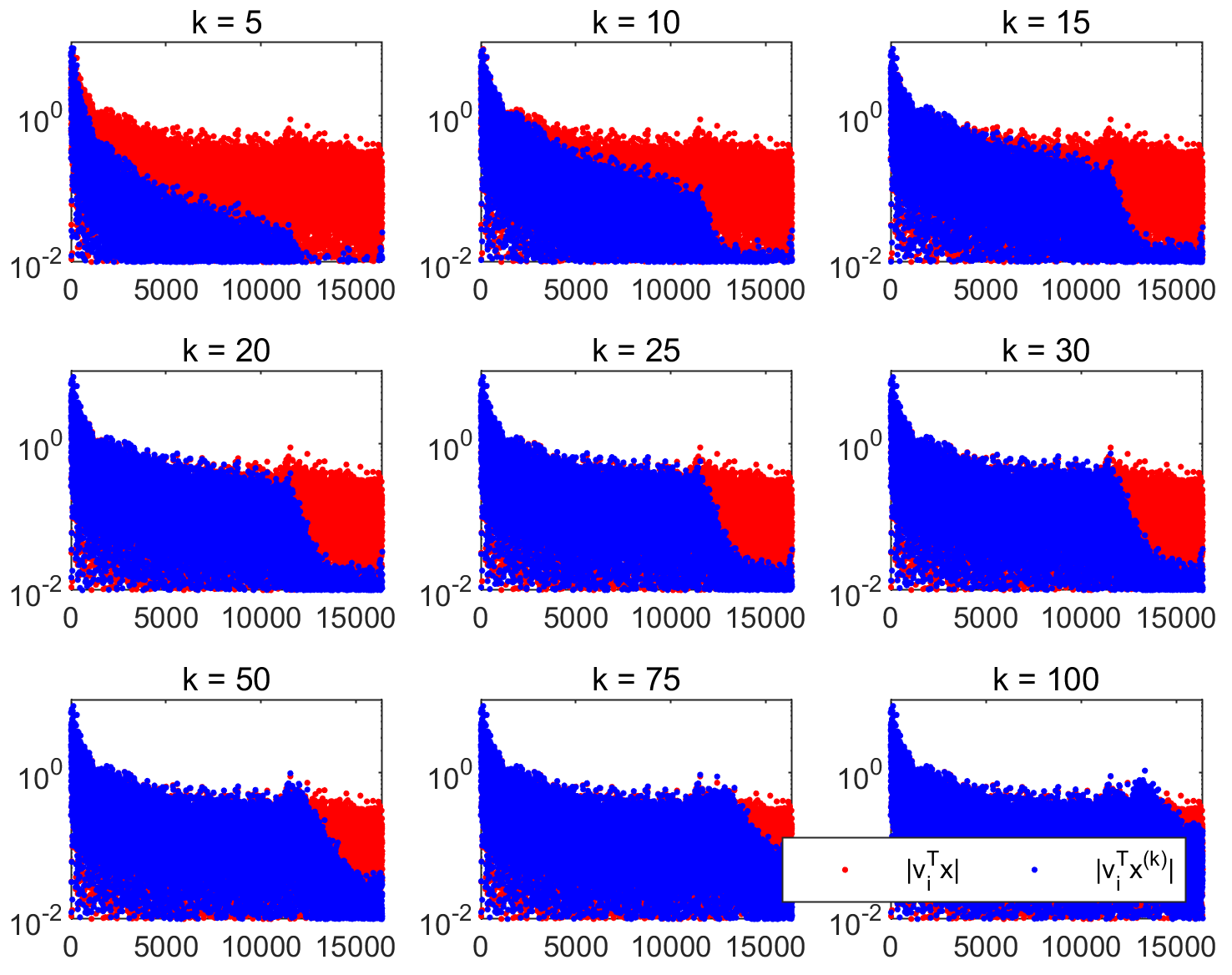} \\[3mm]
	\includegraphics[width=0.35\textwidth]{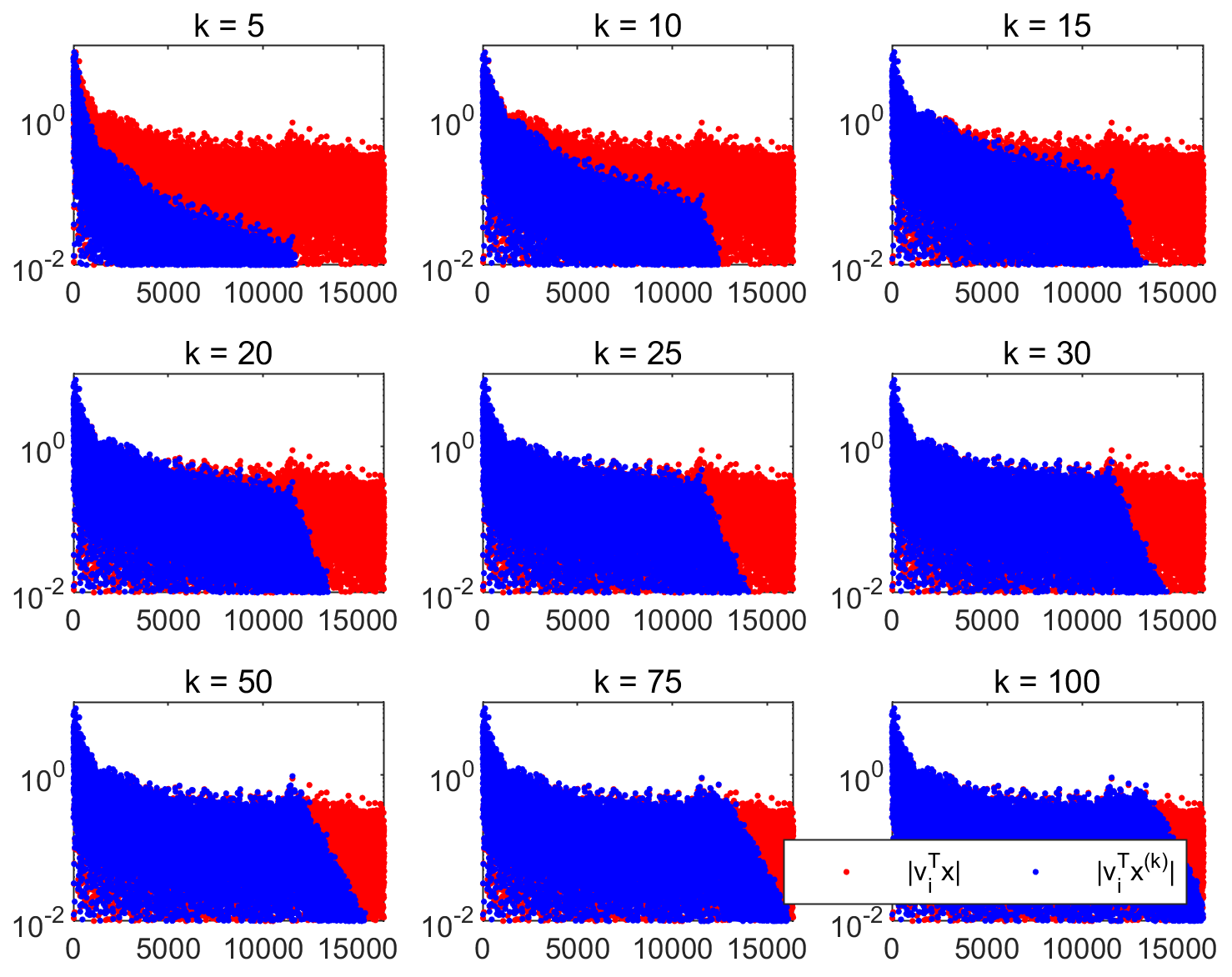}
	\includegraphics[width=0.35\textwidth]{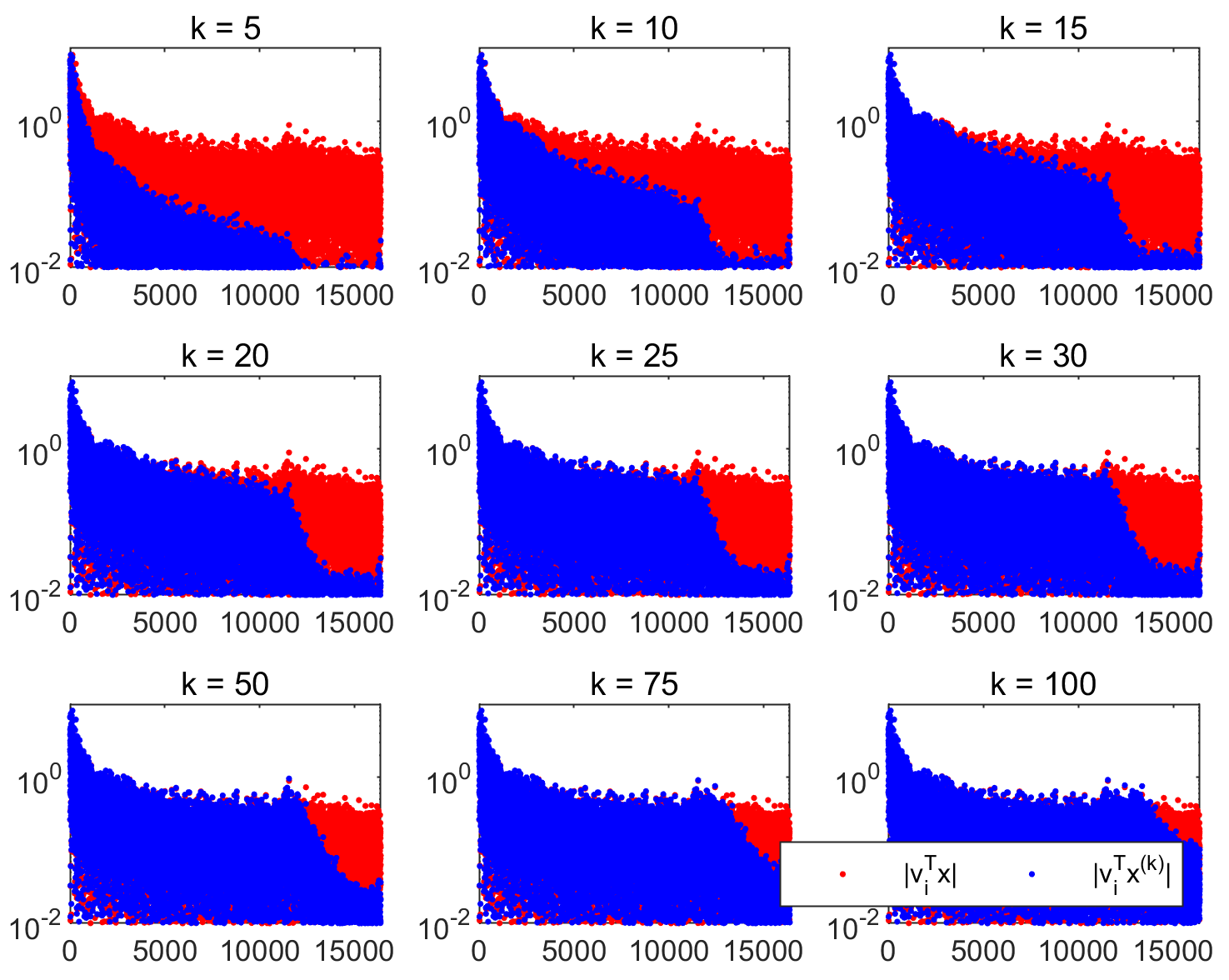} \\[3mm]
	--------- $\tau = 0.3$ --------- \qquad
	--------- $\tau = 0.5$ --------- \\[3mm]
	\includegraphics[width=0.35\textwidth]{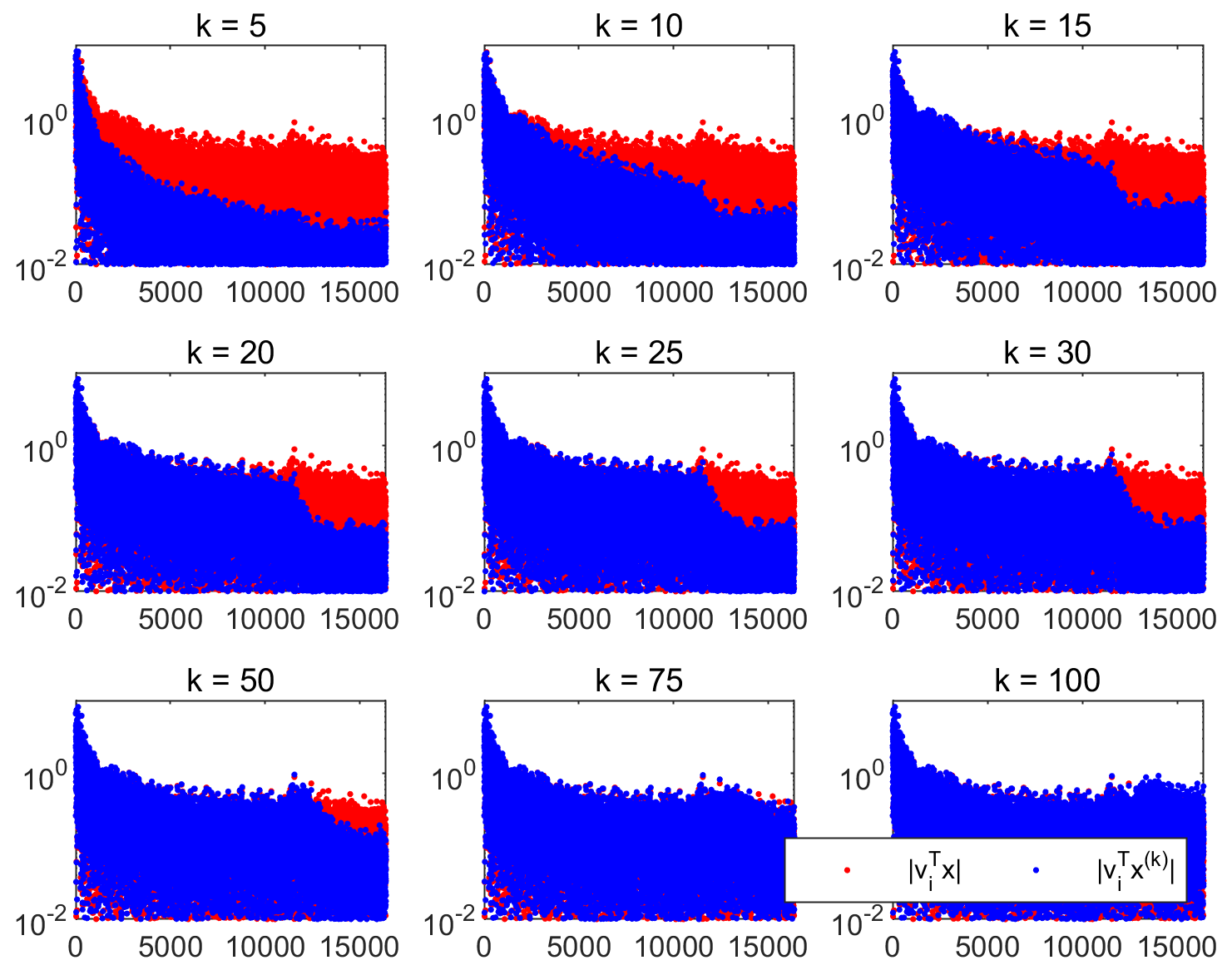}
	\includegraphics[width=0.35\textwidth]{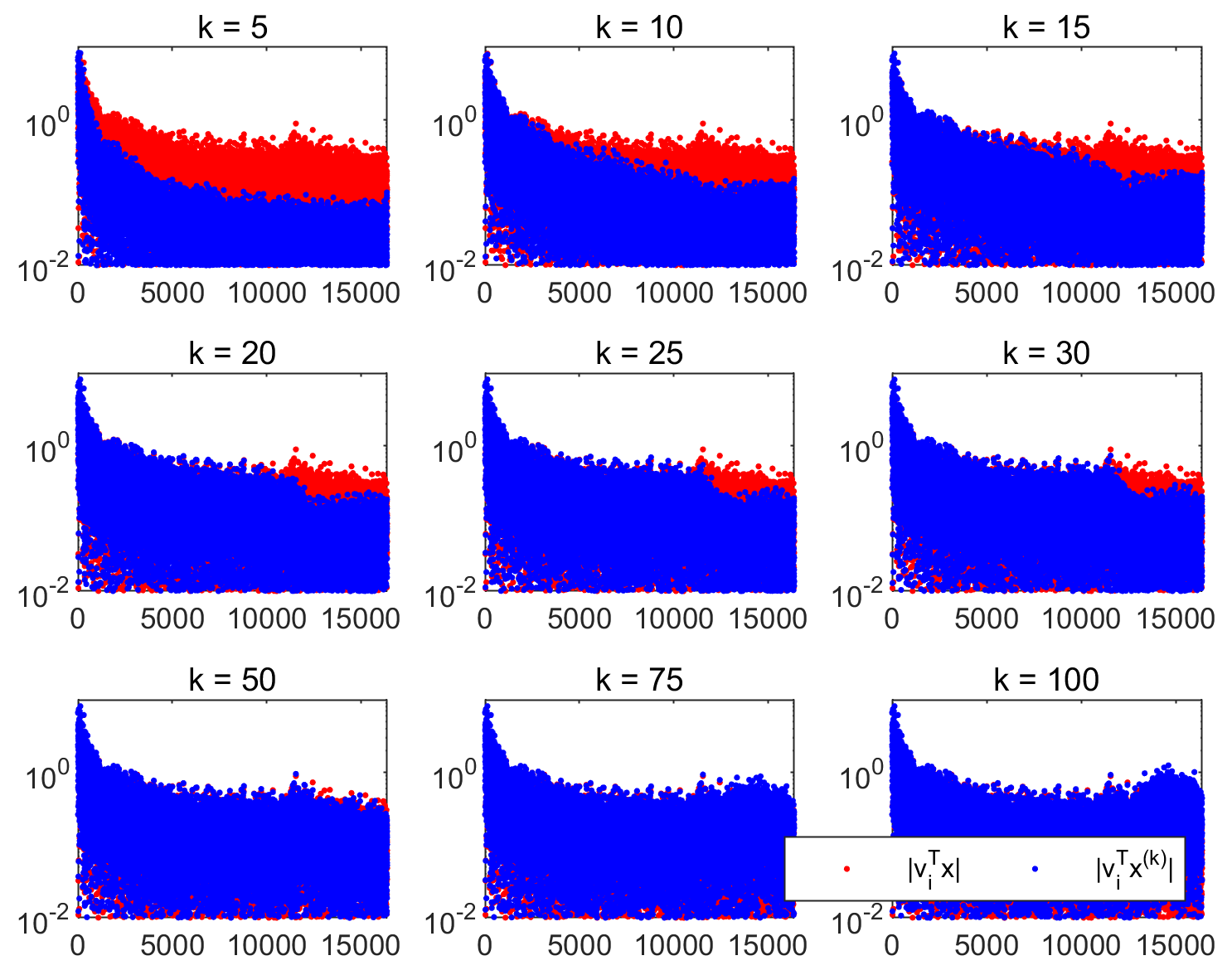} \\[3mm]
	\includegraphics[width=0.35\textwidth]{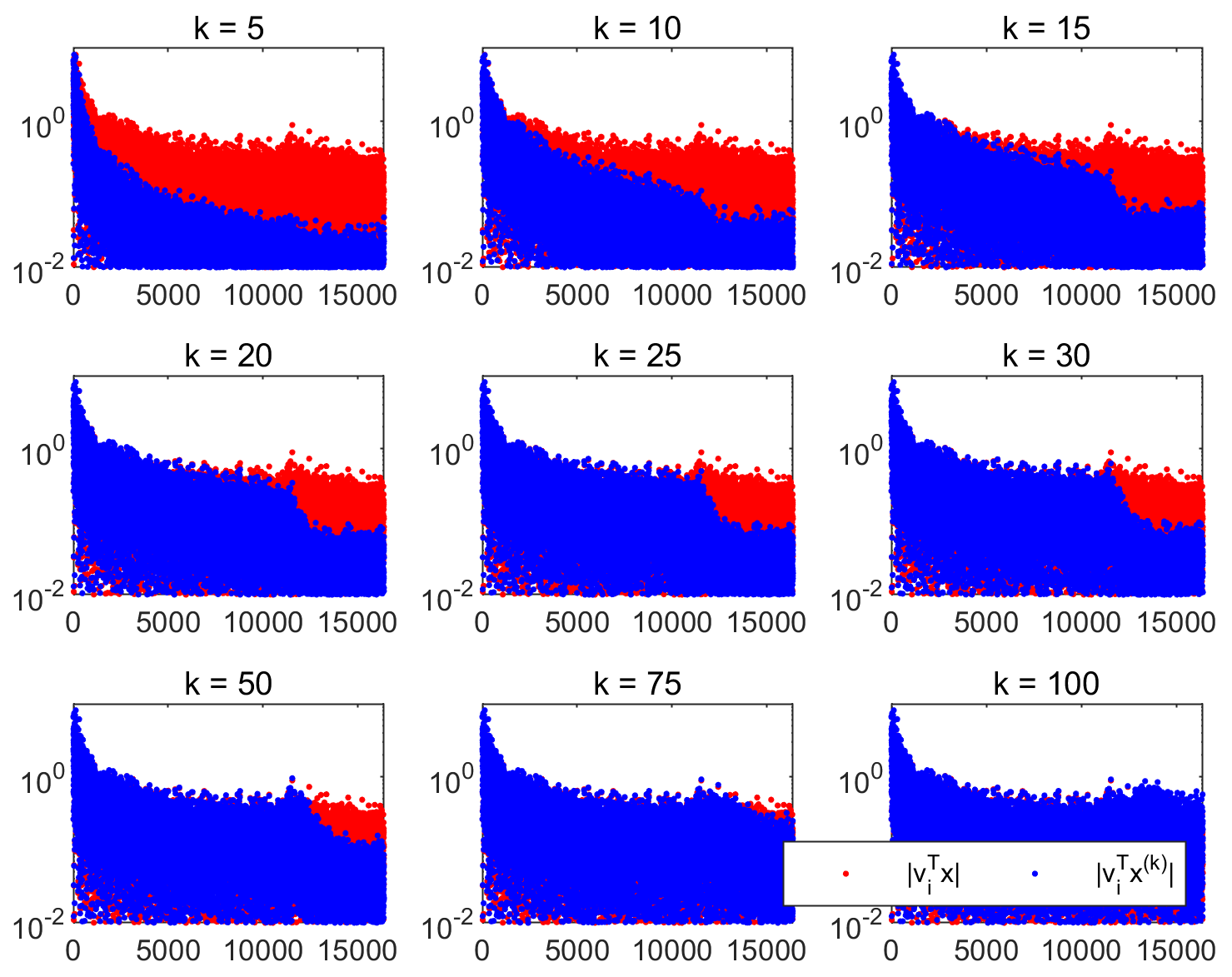}
	\includegraphics[width=0.35\textwidth]{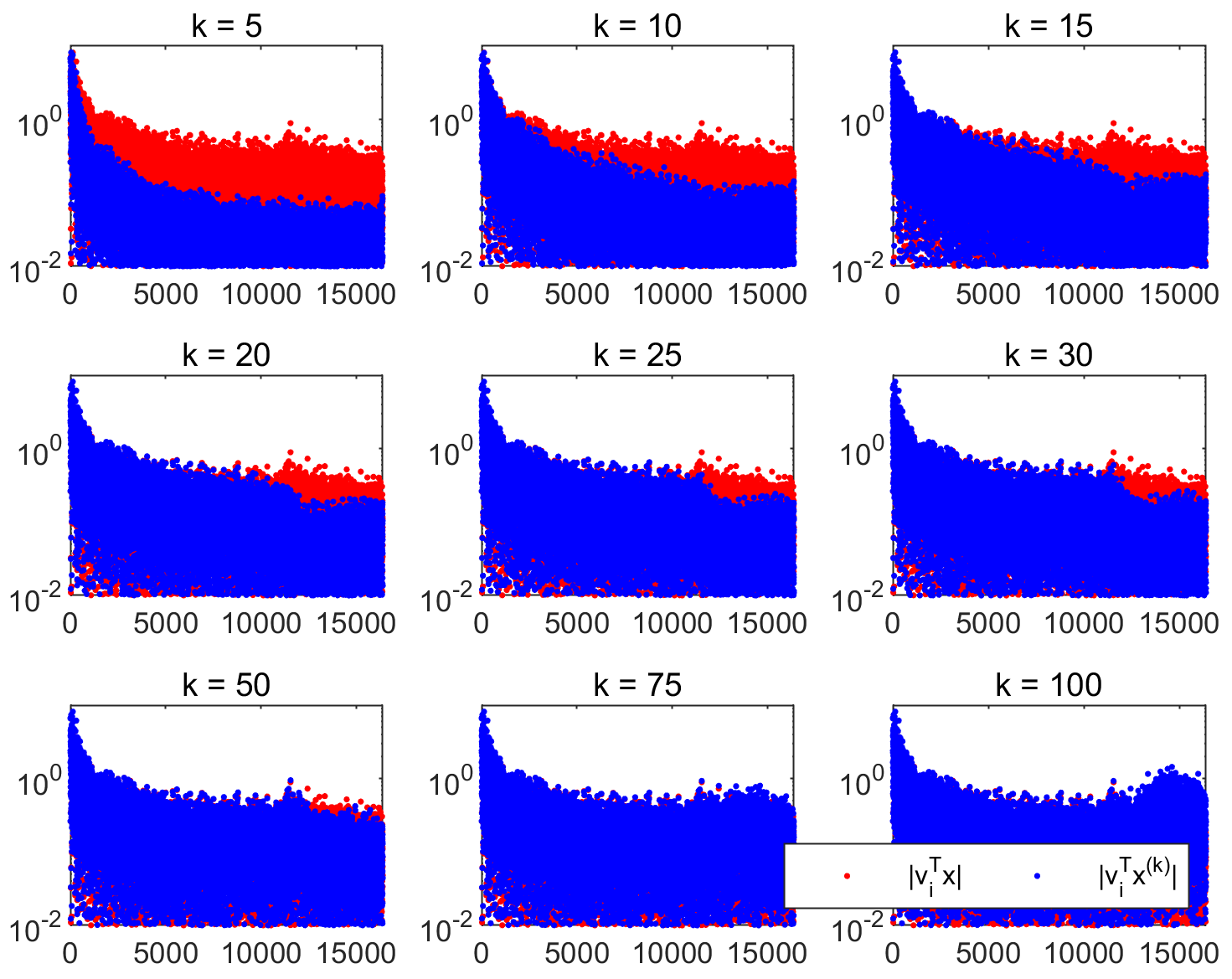}
	\caption{SVD analysis of the iterates of AB-GMRES (top) and BA-GMRES (bottom)
		for the unmatched transpose $B_\tau$ generated according to~\eqref{eq:Btau},
		similar to Figures \ref{fig:svd} and \ref{fig:svdagain}.
		The noise level is $\| e \|_2 / \| \bar{b} \|_2 = 0.003$.}
	\label{fig:SmallTallSVDerrrsInB}
\end{figure}

Similar to the SVD analysis in the previous subsection, we can use the SVD of
$\As$ to analyze the semi-convergence of AB- and BA-GMRES applied to $(\As,B_\tau)$
for $\tau=0.01$, $0.1$, $0.3$ and $0.5$.
For this analysis we use the small test matrix~$\As$ (because we need to compute
the full SVD) and Table~\ref{tbl:unmatch} shows
the corresponding $\| B_\tau - \AsT \|\fro / \| \As \|\fro$.
The relative noise level is $\| e \|_2 / \| \bar{b} \|_2 = 0.003$.
The results are shown in Figure \ref{fig:SmallTallSVDerrrsInB}.
\begin{enumerate}
	\item
	This behavior and the plots are practically identical for AB-GMRES and BA-GMRES.
	\item
	For $\tau=0.01$ the behavior is, as expected, almost similar to that for
	LSQR/LSMR shown in Figure~\ref{fig:svd} (which corresponds to the case $\tau=0$).
	Specifically, during the first iterations we do not include undesired noisy
	SVD components corresponding to the small singular values.
	\item
	As $\tau$ increases the behavior starts to resemble the case $(\As,\AlT)$
	shown in Figure~\ref{fig:svd}, where we -- even during the first iterations --
	include more undesired SVD components as compared to LSQR/LSMR.
\end{enumerate}
Again these results confirm our theory about the connection between
the unmatchedness $\| \AiT - \AsT \|\fro / \| \As \|\fro$ and the behavior
of the iterative methods.

\section{Conclusion}

The AB- and BA-GMRES algorithms can be considered as preconditioned
versions of the well-known GMRES algorithm, and we discuss how to use
these algorithms for solving large-scale X-ray CT problems
with an unmatched projector.
We also study the behavior of AB- and BA-GMRES as the back
projector approaches a matched one.
Numerical experiments, including numerical SVD analysis, provide the
insight that AB- and BA-GMRES exhibit semi-convergence and thus they behave
as regularizing iterative methods where the number of iterations
is the regularization parameter (similar to the behavior as LSQR/LSMR)\@.
Our numerical experiments also demonstrate that the discrepancy principle
and the NCP-criterion work well as stopping rules.
We recommend software developers to consider AB- and BA-GMRES
for the solution of large-scale CT reconstruction problems.

\section*{Acknowledgements}

We would like to thank Dr.\ Silvia Gazzola and Prof.\ Lothar Reichel
for valuable information.

\appendix
\section{Proof of Theorem~\ref{th:perturb_bound}}
\label{app:proof4perturb_bound}

\setcounter{lemma}{0}
\renewcommand{\thelemma}{\Alph{section}\arabic{lemma}}

We introduce notation required for the assertions below.
Denote the rank of $A$ by $r$.
Let $U = [U_1, U_2] \in \mathbb{R}^{m \times m}$ be an orthogonal matrix with $\mathcal{R}(U_1) = \mathcal{R}(A)$ and $\mathcal{R}(U_2) = \mathcal{R}(A)^\perp$, where $U_1 \in \mathbb{R}^{m \times r}$ and $U_2 \in \mathbb{R}^{m \times (m-r)}$.
Let $V = [V_1, V_2] \in \mathbb{R}^{n \times n}$ be an orthogonal matrix with $\mathcal{R}(V_1) = \mathcal{N}(A)^\perp$ and $\mathcal{R}(V_2) = \mathcal{N}(A)$, where $V_1 \in \mathbb{R}^{n \times r}$ and $V_2 \in \mathbb{R}^{n \times (n-r)}$.
Then, we transform the unmatched normal equations via $U$ and $V$ into
\begin{align}
	V^\trans \tilde{B} \tilde{A} U U^\trans (x_\mathrm{min} + \delta x_\mathrm{min}) = V^\trans \tilde{B} U U ^\trans b,
\end{align}
in which
\begin{align}
	U^\trans A V =
	\begin{bmatrix}
		A_{11} & \mathrm{O} \\
		\mathrm{O} & \mathrm{O}
	\end{bmatrix}, \quad
	U^\trans E_k V =
	\begin{bmatrix}
		E_{11}^{(k)} & E_{12}^{(k)} \\
		E_{21}^{(k)} & E_{22}^{(k)}
	\end{bmatrix}
\end{align}
for $k = 1$, $2$, where $A_{11} = U_1^\trans A V_1$, $E_{ij}^{(k)} = {U_i}^\trans E_k V_j$ for $i, j = 1, 2$.
Hereafter, we neglect higher-order terms to derive a first-order perturbation bound for $\delta x_\mathrm{min}$.
Thus, we obtain a first-order approximation of the transformed coefficient matrix of \eqref{eq:trans_unmatched_ne}
\begin{align}
	V^\trans \tilde{B}^\trans U U^\trans \tilde{A} V =
	\begin{bmatrix}
		G & {A_{11}}^\trans E_{12}^{(1)} \\
		(E_{12}^{(2)})^\trans A_{11} & \mathrm{O}
	\end{bmatrix} ,
	\label{eq:VTAhatAUUTAtildeV}
\end{align}
where $G = {A_{11}}^\trans A_{11} + F$ with $F = {A_{11}}^\trans E_{11}^{(1)} + (E_{11}^{(2)})^\trans A_{11}$, and the right-hand side vector of \eqref{eq:trans_unmatched_ne} together with $V^\trans$
\begin{align}
	V^\trans \tilde{B} U U^\trans b =
	\begin{bmatrix}
		{A_{11}}^\trans \bar{b}_1 + {A_{11}}^\trans \delta b_1 + (E_{11}^{(2)})^\trans \bar{b}_1 + (E_{21}^{(2)})^\trans \bar{b}_2 \\
		(E_{12}^{(2)})^\trans \bar{b}_1 + (E_{22}^{(2)})^\trans \bar{b}_2
	\end{bmatrix},
\end{align}
where $b_i = U_i^\trans b$, $i = 1, 2$, and $\delta b_1 = U_1^\trans \delta b$.

\begin{lemma} \label{lm:err_norm}
	Assume that $\tilde{A}$ and $\tilde{B}$ are both acute perturbations of $A$ and $A^\mathsf{T}$, respectively.
	The first-order bound of the relative error norm is given by
	\begin{align}
		\frac{\| \delta x_\mathrm{min} \|_2}{\| x_\mathrm{min} \|_2} \leq \kappa_2 (A) \left\lbrace \sigma_r^{-1} \left[ \left( \| E_{11}^{(1)} \|_2 + \| E_{12}^{(1)} \|_2 \right) \frac{\| \bar{b}_1 \|_2}{\| \bar{b} \|_2} + \frac{\|  (E_{21}^{(2)})^\trans \bar{b}_2 \|_2}{\| \bar{b} \|_2}  \right] + \frac{\| \delta b_1 \|_2}{\| \bar{b} \|_2} \right\rbrace,
	\end{align}
	where $\kappa_2(A) = \| A \|_2 \| A^\dag \|_2$ is the condition number of $A$ and $\sigma_r$ is the nonzero smallest singular value of $A$.
\end{lemma}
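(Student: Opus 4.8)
The plan is to reduce the perturbed least-squares problem \eqref{eq:trans_unmatched_ne} to a small, explicitly solvable block system by means of the orthogonal change of variables already set up above, and then to read off $\delta x_\mathrm{min}$ from the first-order solution of that system. Since $U$ and $V$ are orthogonal they preserve both the $2$-norm and the minimum-norm property, so it suffices to compute the minimum-norm least-squares solution $\hat{x} = V^\trans(x_\mathrm{min}+\delta x_\mathrm{min}) = [\hat{x}_1^\trans,\hat{x}_2^\trans]^\trans$ of the reduced problem with coefficient matrix \eqref{eq:VTAhatAUUTAtildeV} and the reduced right-hand side displayed above, and to compare it with $V^\trans x_\mathrm{min} = [(A_{11}^{-1}\bar b_1)^\trans,\,0]^\trans$.

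First I would record the structural features that make the first-order analysis tractable: the $(1,1)$ block $G = A_{11}^\trans A_{11}+F$ is a first-order perturbation of the symmetric positive-definite matrix $A_{11}^\trans A_{11}$ (recall that $A_{11}=U_1^\trans A V_1$ has exactly the nonzero singular values of $A$, so $\|A_{11}^{-1}\|_2=\sigma_r^{-1}$ and $\|A_{11}\|_2=\|A\|_2$); the off-diagonal blocks $A_{11}^\trans E_{12}^{(1)}$ and $(E_{12}^{(2)})^\trans A_{11}$ are $O(\|E_1\|,\|E_2\|)$; and the $(2,2)$ block vanishes to first order. Writing the least-squares residual in the two block rows as $\rho_1,\rho_2$ and forming the normal equations, every term coupling $\rho_2$ back into the first block of equations carries an extra factor $E_{12}^{(2)}$ and is therefore second order; consequently the first-order optimality conditions collapse to $\rho_1=0$, i.e. $G\hat{x}_1 + A_{11}^\trans E_{12}^{(1)}\hat{x}_2 = c_1$, while the equation for $\hat{x}_2$ becomes vacuous at first order. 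This leaves $\hat{x}_2$ undetermined by stationarity alone, so the minimum-norm requirement is what fixes it.

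Next I would solve this residual equation together with the minimization of $\|\hat{x}_1\|_2^2+\|\hat{x}_2\|_2^2$. Eliminating $\hat{x}_1 = G^{-1}(c_1-A_{11}^\trans E_{12}^{(1)}\hat{x}_2)$ and minimizing over $\hat{x}_2$ gives, to first order, $\hat{x}_2 = (E_{12}^{(1)})^\trans(A_{11}A_{11}^\trans)^{-1}\bar b_1$, which is $O(\|E_1\|)$ and contributes a term bounded by $\sigma_r^{-2}\|E_{12}^{(1)}\|_2\|\bar b_1\|_2$. For $\hat{x}_1$ I would expand $G^{-1}\approx(A_{11}^\trans A_{11})^{-1}-(A_{11}^\trans A_{11})^{-1}F(A_{11}^\trans A_{11})^{-1}$ and apply it to $c_1$. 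The zeroth-order part reproduces $A_{11}^{-1}\bar b_1$ and cancels against $V^\trans x_\mathrm{min}$; the crucial bookkeeping step is that the $(E_{11}^{(2)})^\trans\bar b_1$ contribution coming from $c_1$ is exactly cancelled by the matching term produced by $-(A_{11}^\trans A_{11})^{-1}F(A_{11}^\trans A_{11})^{-1}A_{11}^\trans\bar b_1$, leaving only $A_{11}^{-1}\delta b_1 + (A_{11}^\trans A_{11})^{-1}(E_{21}^{(2)})^\trans\bar b_2 - A_{11}^{-1}E_{11}^{(1)}A_{11}^{-1}\bar b_1$. Thus the surviving first-order contributions to $\delta x_\mathrm{min}$ involve precisely $E_{11}^{(1)}$, $E_{12}^{(1)}$, $(E_{21}^{(2)})^\trans\bar b_2$ and $\delta b_1$, matching the four terms in the asserted bound.

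The remaining step is routine estimation: bound $\|\delta x_\mathrm{min}\|_2^2 = \|\hat{x}_1-A_{11}^{-1}\bar b_1\|_2^2+\|\hat{x}_2\|_2^2$ by the triangle inequality and submultiplicativity, using $\|(A_{11}^\trans A_{11})^{-1}\|_2=\sigma_r^{-2}$ and $\|A_{11}^{-1}\|_2=\sigma_r^{-1}$, and then normalize by $\|x_\mathrm{min}\|_2$ via $\|A x_\mathrm{min}\|_2 = \|\bar b|_{\range(A)}\|_2=\|\bar b_1\|_2$ together with $\kappa_2(A)=\|A\|_2/\sigma_r$, which produces the factor $\kappa_2(A)$ and the normalization by $\|\bar b\|_2$. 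I expect the main obstacle to be the step of correctly computing the minimum-norm least-squares solution of the (first-order) rank-deficient block system and, in particular, justifying which contributions are genuinely first order and which may be discarded. This is exactly where the acute-perturbation hypotheses enter, since they guarantee that the ranges $\range(\tilde A)$, $\range(\tilde A^\trans)$, $\range(\tilde B)$, $\range(\tilde B^\trans)$ deform continuously from those of $A$ and $A^\trans$, so that the rank structure used in the reduction is preserved and the neglected quantities are truly of second order.
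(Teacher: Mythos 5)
Your proposal is correct, and it reproduces the paper's proof skeleton exactly: the same $(U,V)$ reduction, the same first-order expression for $V^\trans \delta x_\mathrm{min}$ (first block $-A_{11}^{-1}E_{11}^{(1)}A_{11}^{-1}\bar b_1 + A_{11}^{-1}\delta b_1 + (A_{11}^\trans A_{11})^{-1}(E_{21}^{(2)})^\trans\bar b_2$, second block of size $O(\|E_{12}^{(1)}\|)$), and the same final estimation via $\|A_{11}^{-1}\|_2=\sigma_r^{-1}$ and $\|x_\mathrm{min}\|_2 \geq \|A\|_2^{-1}\|\bar b\|_2$. Where you genuinely differ is the key technical tool: the paper computes the minimum-norm solution by invoking the Hung--Markham formula for the pseudoinverse of a block two-by-two matrix to write down $C^\dag$ explicitly, and then the Hung--Markham formula for the pseudoinverse of a sum to expand $G^\dag = (A_{11}^\trans A_{11})^{-1} - L$; you instead avoid both citations by working from the normal equations of the reduced problem, observing that every coupling through $E_{12}^{(2)}$ enters at second order, so stationarity collapses to $G\hat x_1 + A_{11}^\trans E_{12}^{(1)}\hat x_2 = c_1$ with $\hat x_2$ fixed by the minimum-norm variational condition. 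Your route is more elementary and self-contained, and it makes the cancellation of the $(E_{11}^{(2)})^\trans\bar b_1$ term transparent rather than an outcome of formula manipulation; the paper's route is a mechanical computation once the pseudoinverse formulas are granted. Two remarks: first, your $\hat x_2 = (E_{12}^{(1)})^\trans (A_{11}A_{11}^\trans)^{-1}\bar b_1$ differs from the paper's displayed $(E_{12}^{(1)})^\trans (A_{11}^\trans A_{11})^{-1}\bar b_1$ -- your version is in fact the one consistent with the paper's alternative proof of Theorem~\ref{th:perturb_bound}, whose corresponding term $P_{\mathcal{N}(A)}E_1^\trans (AA^\trans)^\dag\, \bar b|_{\range(A)}$ transforms precisely to your expression, and in any case both matrices have $2$-norm $\sigma_r^{-2}$, so the bound is unaffected. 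Second, your characterization of the first-order minimizer set (an affine family parameterized by $\hat x_2$) is only valid because acuteness preserves the ranks and keeps $A_{11}$ nonsingular; you invoke this correctly, and your neglect of higher-order terms is at exactly the same level of rigor as the paper's own proof, so there is no gap.
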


\begin{proof}[Proof of Lemma~\ref{lm:err_norm}]
	The assumptions imply that there is no vector in $\mathcal{R}(A)$ that is orthogonal to $\mathcal{R}(\tilde{A})$ and $\mathcal{R}(\tilde{B})$, $\mathrm{rank} (A) = \mathrm{rank}(\tilde{A}) = \mathrm{rank}(\tilde{B})$ \cite[Theorem~3.1]{StewartSun1990}, and $A_{11}$ is nonsingular \cite[Theorem~3.3]{StewartSun1990}.
	We use the formula in \cite{HungMarkham1975LAA} for the pseudoinverse of a block two-by-two matrix to \eqref{eq:VTAhatAUUTAtildeV}.
	Let $C = V^\trans \tilde{B}^\trans U U^\trans \tilde{A} V$.
	Then,
	\begin{align}
		C^\dag =
		\begin{bmatrix}
			(G^\dag)^\trans & (G^\trans G)^\dag A_{11}^\trans E_{12}^{(2)} \\
			(E_{12}^{(1)})^\trans A_{11} (G^\dag)^\trans G^\dag & \mathrm{O}
		\end{bmatrix}.
	\end{align}
	Therefore, the minimum-norm solution together with $V$ is given by
	\begin{align}
		V^\trans (x_\mathrm{min} + \delta x_\mathrm{min})
		& = C^\dag V^\trans \tilde{B} U U^\trans b \\
		& =
		\begin{bmatrix}
			(G^\dag)^\trans \left[ {A_{11}}^\trans (\bar{b}_1 + \delta b_1) + (E_{11}^{(2)})^\trans \bar{b}_1 + (E_{21}^{(2)})^\trans \bar{b}_2\right] \\
			(E_{12}^{(1)})^\trans A_{11} (G^\dag)^\trans G^\dag {A_{11}}^\trans \bar{b}_1
		\end{bmatrix}.
	\end{align}
	Applying the formula in \cite{HungMarkham1977} for the pseudoinverse of the sum of two matrices to $G^\dag = ( {A_{11}}^\trans A_{11} + F )^\dag$, we obtain $G^\dag = ({A_{11}}^\trans A_{11})^{-1} - L$, where $L = ({A_{11}}^\trans A_{11})^{-1} F ({A_{11}}^\trans A_{11})^{-1}$.
	Therefore, we have
	\begin{align}
		V^\trans (x_\mathrm{min} + \delta x_\mathrm{min})
		=
		\begin{bmatrix}				
			{A_{11}}^{-1} (\bar{b}_1 + \delta b_1) + ({A_{11}}^\trans A_{11})^{-1} \left[ (E_{11}^{(2)})^\trans \bar{b}_1 + (E_{21}^{(2)})^\trans \bar{b}_2\right] - L^\trans {A_{11}}^\trans \bar{b}_1 \\
			(E_{12}^{(1)})^\trans ({A_{11}}^\trans A_{11})^{-1} \bar{b}_1
		\end{bmatrix}.
	\end{align}
	As
	\begin{align}
		V^\trans x_\mathrm{min} =
		\begin{bmatrix}
			A_{11}^{-1} \bar{b}_1 \\
			\boldsymbol{0}
		\end{bmatrix},
	\end{align}
	we have
	\begin{align}
		V^\trans \delta x_\mathrm{min}
		=
		\begin{bmatrix}
			- {A_{11}}^{-1} E_{11}^{(1)} A_{11}^{-1} \bar{b}_1 + A_{11}^{-1} \delta b_1 + ({A_{11}}^\trans A_{11})^{-1} (E_{21}^{(2)})^\trans \bar{b}_2 \\
			(E_{12}^{(1)})^\trans (A_{11}^\trans A_{11})^{-1} \bar{b}_1
		\end{bmatrix}.
	\end{align}
	The proof is completed by bounding $\| V^\trans \delta x_\mathrm{min} \|_2 = \| \delta x_\mathrm{min} \|_2$, together with \break $\| x_\mathrm{min} \|_2 \geq \| A \|_2^{-1} \| \bar{b} \|_2$.
\end{proof}

\begin{proof}[Proof of Theorem~\ref{th:perturb_bound}]
	It follows from Lemma~\ref{lm:err_norm} that we have
	\begin{align}
		\| \delta x_\mathrm{min} \|_2
		& \leq \sigma_r^{-2} \left[ \left( \| P_{\mathcal{R}(A)} E_1 P_{\mathcal{R}(A^\trans)} \|_2 + \| P_{\mathcal{R}(A)} E_1 P_{\mathcal{N}(A)} \|_2 \right) \| P_{\mathcal{R}(A)} \bar{b} \|_2 + \| P_{\mathcal{R}(A^\trans)} E_2 P_{\mathcal{R}(A)^\perp} \bar{b} \|_2 \right] \\
		& \quad + \sigma_r^{-1} \| P_{\mathcal{R}(A)} \delta b \|_2 \\
		& \leq \sigma_r^{-2} \left( 2 \| E_1 \|_2 \| \bar{b}|_{\mathcal{R}(A)} \|_2 + \| E_2 \|_2 \| \bar{b}|_{\mathcal{R}(A)^\perp} \|_2 \right) + \sigma_r^{-1} \| \delta b|_{\mathcal{R}(A)} \|_2.
	\end{align}
\end{proof}

An alternative proof of Theorem~\ref{th:perturb_bound} can be given by directly applying the formula in \cite{HungMarkham1977} to the sum of $A^\trans A$ and $A^\trans E_1 + E_2^\mathsf{T} A$.

\begin{proof}[Alternative proof of Theorem~\ref{th:perturb_bound}]
	We use the formula in \cite{HungMarkham1977} for the pseudoinverse of the sum of two matrices~$(A^\trans A+E)^\dag$, where $E = A^\trans E_1 + E_2^\mathsf{T} A$.
	For convenience, let $M = A^\trans A E + E^\trans A^\trans A$ and
	\begin{align}
		T_n = (t_{ij}) \in \mathbb{R}^{n \times n}, \quad t_{ij} =
		\begin{cases}
			1, \quad j = n - i + 1, \\
			0, \quad \text{otherwise}.
		\end{cases}
	\end{align}
	Note ${T_n}^2 = \mathrm{I}$.
	Then,
	\begin{align}
		& (A^\trans A + E)^\dag \\
		& = T_n \left( T_n \left[ (A^\trans A)^2 \right]^\dag T_n \left\{ T_n - T_n M \left[ (A^\trans A)^2 \right]^\dag \right\} + T_n P_{\mathcal{N}(A)} M \left[ (A^\trans A)^4 \right]^\dag \right) (A^\trans A + E^\trans) \\
		& = \left\{ \mathrm{I} - \left[ (A^\trans A)^2 \right]^\dag M + P_{\mathcal{N}(A)} M \left[ (A^\trans A)^2 \right]^\dag \right\} \left[ (A^\trans A)^2 \right]^\dag (A^\trans A + E^\trans) \\
		& = \left\{ \mathrm{I} - \left[ (A^\trans A)^2 \right]^\dag M + P_{\mathcal{N}(A)} M \left[ (A^\trans A)^2 \right]^\dag \right\} (A^\trans A)^\dag + \left[ (A^\trans A)^2 \right]^\dag E^\trans \\
		& = (A^\trans A)^\dag - \left[ (A^\trans A)^2 \right]^\dag (A^\trans A E + E^\trans A^\trans A) (A^\trans A)^\dag + P_{\mathcal{N}(A)} (A^\trans A E + E^\trans A^\trans A) [ (A^\trans A)^3 ]^\dag \\
		& \quad + \left[ (A^\trans A)^2 \right]^\dag E^\trans \\
		& = (A^\trans A)^\dag -  (A^\trans A)^\dag E (A^\trans A)^\dag - \left[ (A^\trans A)^2 \right]^\dag E^\trans P_{\mathcal{R}(A^\trans)} + P_{\mathcal{N}(A)} E^\trans \left[ (A^\trans A)^2 \right]^\dag \\
		& \quad + \left[ (A^\trans A)^2 \right]^\dag {E_1}^\trans A + (A A^\trans A)^\dag {E_2}^\trans \\
		& = (A^\trans A)^\dag -  A^\dag E_1 (A^\trans A)^\dag - (A^\trans A)^\dag E_2 (A^\dag)^\trans - \left[ (A^\trans A)^2 \right]^\dag {E_1}^\trans A - (A A^\trans A)^\dag {E_2}^\trans P_{\mathcal{R}(A^\trans)} \\
		& \qquad \qquad + P_{\mathcal{N}(A)} {E_1}^\trans (A^\trans A A^\trans)^\dag + \left[ (A^\trans A)^2 \right]^\dag {E_1}^\trans A + (A A^\trans A)^\dag {E_2}^\trans \\
		& = (A^\trans A)^\dag -  A^\dag E_1 (A^\trans A)^\dag - (A^\trans A)^\dag E_2 (A^\dag)^\trans - (A A^\trans A)^\dag {E_2}^\trans P_{\mathcal{R}(A^\trans)} + P_{\mathcal{N}(A)} {E_1}^\trans (A^\trans A A^\trans)^\dag \\
		& \quad + (A A^\trans A)^\dag {E_2}^\trans.
	\end{align}
	Applying this matrix to $\tilde{B}$, we have
	\begin{align}
		(A^\trans A + E)^\dag \tilde{B} = A^\dag - A^\dag E_1 A^\dag - (A^\trans A)^\dag E_2^\trans P_{\mathcal{R}(A)} + P_{\mathcal{N}(A)} {E_1}^\trans (A A^\trans)^\dag + (A^\trans A)^\dag E_2^\trans.
	\end{align}
	Hence, the minimum-norm solution of \eqref{eq:trans_unmatched_ne} is given by
	\begin{align}
		& (\tilde{B} \tilde{A})^\dag \tilde{B} b \\
		& = (A^\trans A + E)^\dag (A^\trans + E_2^\trans) (\bar{b} + \delta b) \\
		& = \left[ A^\dag - A^\dag E_1 A^\dag - (A^\trans A)^\dag E_2^\trans P_{\mathcal{R}(A)} + P_{\mathcal{N}(A)} {E_1}^\trans (A A^\trans)^\dag + (A^\trans A)^\dag E_2^\trans \right] (\bar{b}|_{\mathcal{R}(A)} + \bar{b}|_{\mathcal{R}(A)^\perp}) \\
		& \quad + A^\dag (\delta b|_{\mathcal{R}(A)} + \delta b|_{\mathcal{R}(A)^\perp}) \\
		& = \left[ A^\dag - A^\dag E_1 A^\dag + P_{\mathcal{N}(A)} {E_1}^\trans (A A^\trans)^\dag \right] \bar{b}|_{\mathcal{R}(A)} + (A^\trans A)^\dag E_2^\trans \bar{b}|_{\mathcal{R}(A)^\perp} + A^\dag \delta b|_{\mathcal{R}(A)}.
	\end{align}
	Therefore, the error is given by
	\begin{align}
		\delta x_\mathrm{min} & = (\tilde{B} \tilde{A})^\dag \tilde{B} b - A^\dag \bar{b} \\
		& = \left[ - A^\dag E_1 A^\dag + P_{\mathcal{N}(A)} {E_1}^\trans (A A^\trans)^\dag \right] \bar{b}|_{\mathcal{R}(A)} + (A^\trans A)^\dag E_2 \bar{b}|_{\mathcal{R}(A)^\perp} + A^\dag \delta b|_{\mathcal{R}(A)}.
	\end{align}
	The proof can be completed by bounding $\| \delta x_\mathrm{min} \|_2$.
\end{proof}

\bibliographystyle{siam}
\bibliography{mybibfile}

\begin{thebibliography}{10}

\bibitem{CaLR02}
{\sc D.~Calvetti, B.~Lewis, and L.~Reichel}, {\em On the regularizing
  properties of the {GMRES} method}, Numerische Mathematik, 91 (2002),
  pp.~605--625.

\bibitem{CPRST21}
{\sc E.~Couzenoux, J.-C. Pesquet, C.~Riddell, M.~Savanier, and Y.~Trousset},
  {\em Convergence of proximal gradient algorithm in the presence of adjoint
  mismatch}, Inverse Problems, 37 (2021), p.~065009.

\bibitem{DoEsMaSe06}
{\sc M.~Donatelli, C.~Estatico, A.~Martinelli, and S.~Serra-Capizzano}, {\em
  Improved image deblurring with anti-reflective boundary conditions and
  re-blurring}, Inverse Problems, 22 (2006), pp.~2035--2053.

\bibitem{DoMaRe15}
{\sc M.~Donatelli, D.~Martin, and L.~Reichel}, {\em Arnoldi methods for image
  deblurring with anti-reflective boundary conditions}, Applied Mathematics and
  Computation, 253 (2015), pp.~135--150.

\bibitem{DHHR19}
{\sc Y.~Dong, P.~C. Hansen, M.~E. Hochstenbach, and N.~A.~B. Riis}, {\em Fixing
  nonconvergence of algebraic iterative reconstruction with an unmatched
  backprojector}, SIAM J. Sci. Comput., 41 (2019), pp.~A1822--A1839.

\bibitem{KaczmarzInner}
{\sc Y.-S. Du, K.~Hayami, N.~Zheng, K.~Morikuni, and J.-F. Yin}, {\em
  Kaczmarz-type inner-iteration preconditioned flexible {GMRES} methods for
  consistent linear systems}, SIAM J. Sci. Comput.,  (accepted),
  p.~https://arxiv.org/abs/2006.10818.

\bibitem{ElHa18}
{\sc T.~Elfving and P.~C. Hansen}, {\em Unmatched projector/backprojector
  pairs:\ perturbation and convergence analysis}, SIAM J. Sci. Comput., 40
  (2018), pp.~A573--A591.

\bibitem{ElHN14}
{\sc T.~Elfving, P.~C. Hansen, and T.~Nikazad}, {\em Semi-convergence
  properties of {K}aczmarz's method}, Inverse Problems, 30 (2014), p.~055007.

\bibitem{LSMR}
{\sc D.~C.-L. Fong}, {\em {LSMR}:\ {A}n iterative algorithm for least-squares
  problems}, 2021.
\newblock available from \texttt{mathworks.com/matlabcentral/fileexchange/
  27183-lsmr-an-iterative-algorithm-for-least-squares-problems}.

\bibitem{FS}
{\sc D.~C.-L. Fong and M.~Saunders}, {\em {LSMR}:\ {A}n iterative algorithm for
  sparse least-squares problems}, SIAM J. Sci. Comput., 33 (2011),
  pp.~2950--2991.

\bibitem{GaNo16}
{\sc S.~Gazzola and P.~Novati}, {\em Inheritance of the discrete {P}icard
  condition in {K}rylov subspace methods}, BIT Numer. Math., 56 (2016),
  pp.~893--918.

\bibitem{HSSHN}
{\sc K.~Hahn, H.~Sch{\"o}ndube, K.~Stierstorfer, J.~Hornegger, and F.~Noo},
  {\em A comparison of linear interpolation models for iterative {CT}
  reconstruction}, Medical Physics, 43 (2016), pp.~6455--6473.

\bibitem{MHC}
{\sc M.~Hanke}, {\em Conjugate Gradient Type Methods for Ill-Posed Problems},
  Longman Scientific \& Technical, Essex, 1995.

\bibitem{MHL}
\leavevmode\vrule height 2pt depth -1.6pt width 23pt, {\em On lanczos based
  methods for the regularization of discrete ill-posed problems}, BIT, 41
  (2001), pp.~1008--1018.

\bibitem{DIPbook}
{\sc P.~C. Hansen}, {\em Rank-Deficient and Discrete Ill-Posed Problems:\
  Numerical Aspects of Linear In\-ver\-sion}, SIAM, Philadelphia, 1998.

\bibitem{Regtools}
\leavevmode\vrule height 2pt depth -1.6pt width 23pt, {\em Regularization
  {T}ools version 4.0 for {M}atlab 7.3}, Numer. Algo., 46 (2007), pp.~189--194.

\bibitem{Hansen}
\leavevmode\vrule height 2pt depth -1.6pt width 23pt, {\em Discrete Inverse
  Problems:\ Insight and Algorithms}, SIAM, Philadelphia, 2010.

\bibitem{AIRtoolsII}
{\sc P.~C. Hansen and J.~S. J{\o}rgensen}, {\em {AIR} {T}ools {II}:\ algebraic
  iterative reconstruction methods, improved implementation}, Numerical
  Algorithms, 79 (2018), pp.~107--137.

\bibitem{SIAMbook}
{\sc P.~C. Hansen, J.~S. J{\o}rgensen, and W.~R.~B. Lionheart}, eds., {\em
  Computed Tomogrpahy:\ Algorithms, Insight and Just Enough Theory}, SIAM,
  Philadelphia, 2021.

\bibitem{HaJR21}
{\sc P.~C. Hansen, J.~S. J{\o}rgensen, and P.~W. Rasmussen}, {\em Stopping
  rules for algebraic iterative reconstruction methods in computed tomography},
  tech. rep., 2021.
\newblock submitted to the workshop Application of Numerical Analysis to
  Imaging Science during the ICCSA 2021 conference, arXiv preprint
  \texttt{arXiv:2106.10053}.

\bibitem{HaYI10}
{\sc K.~Hayami, J.-F. Yin, and T.~Ito}, {\em {GMRES} methods for least squares
  problems}, SIAM J. Matrix Anal. Appl., 31 (2010), pp.~2400--2430.

\bibitem{HungMarkham1975LAA}
{\sc C.-H. Hung and T.~L. Markham}, {\em The {M}oore--{P}enrose inverse of a
  partitioned matrix ${M} = \left( \begin{smallmatrix} {A} & {D} \\ {B} & {C}
  \end{smallmatrix} \right)$}, Linear Algebra Appl., 11 (1975), pp.~73--86.

\bibitem{HungMarkham1977}
\leavevmode\vrule height 2pt depth -1.6pt width 23pt, {\em The
  {M}oore--{P}enrose inverse of a sum of matrices}, J. Aust. Math. Soc., 24
  (1977), pp.~385--392.

\bibitem{JH}
{\sc T.~Jensen and P.~Hansen}, {\em Itearative regularization with minimum
  residual methods}, BIT, 47 (2007), pp.~103--120.

\bibitem{J}
{\sc Z.~Jia}, {\em Regularization properties of {K}rylov iterative solvers
  {CGME} and {LSMR} for linear discrete ill-posed problems with an application
  to truncated randomized {SVD}s}, Numerical Algorithms, 85 (2020),
  pp.~1281--1310.

\bibitem{LaWe04}
{\sc D.~S. Lalush and M.~N. Wernick}, {\em Iterative image reconstruction}, in
  Emission Tomography -- The Fundamentals of PET and SPECT, M.~N. Wernick and
  J.~N. Aarsvold, eds., Elsevier, 2004.

\bibitem{MorikuniHayami2015}
{\sc K.~Morikuni and K.~Hayami}, {\em Convergence of inner-iteration {GMRES}
  methods for rank-deficient least squares problems}, SIAM J. Matrix Anal.
  Appl., 36 (2015), pp.~225--250.

\bibitem{DP}
{\sc V.~A. Morozov}, {\em Methods for Solving Incorrectly Posed Problems},
  Springer, NY, 1984.

\bibitem{Natterer}
{\sc F.~Natterer}, {\em The Mathematics of Computerized Tomography}, SIAM,
  Philadelphia, 2001 (reprint).

\bibitem{PBS11}
{\sc W.~J. Palenstijn, K.~J. Batenburg, and J.~Sijbers}, {\em Performance
  improvements for iterative electron tomography reconstruction using graphics
  processing units ({GPUs})}, J. Struct. Biol., 176 (2011), pp.~250--253.

\bibitem{ReRo13}
{\sc L.~Reichel and G.~Rodriguez}, {\em Old and new parameter choice rules for
  discrete ill-posed problems}, Numer. Algo., 63 (2013), pp.~65--87.

\bibitem{SoHa14}
{\sc H.~H.~B. S{\o}rensen and P.~C. Hansen}, {\em Multicore performance of
  block algebraic iterative methods}, SIAM J. Sci. Comput., 36 (2014),
  pp.~C524--C546.

\bibitem{StewartSun1990}
{\sc G.~W. Stewart and J.-G. Sun}, {\em Matrix Perturbation Theory}, Academic
  Press, Boston, 1990.

\bibitem{ASTRA}
{\sc W.~van Aarle, W.~J. Palenstijn, J.~Cant, E.~Janssens, R.~Bleichrodt,
  A.~Dabravolski, J.~D. Beenhouwer, K.~J. Batenburg, and J.~Sijbers}, {\em Fast
  and flexible {X}-ray tomography using the {ASTRA} toolbox}, Opt. Express, 24
  (2016), pp.~25129--25147.
\newblock available from \texttt{astra-toolbox.com}.

\bibitem{LiHH21}
{\sc B.~S. van Lith, P.~C. Hansen, and M.~E. Hochstenbach}, {\em A twin error
  gauge for {K}aczmarz's iterations}, SIAM J. Sci. Comput., special section
  Coppen Mountain 2020 (2021).

\bibitem{Wedin1972BIT}
{\sc P.-{\AA}. Wedin}, {\em Perturbation bounds in connection with singular
  value decomposition}, {BIT}, 12 (1972), pp.~99--111.

\end{thebibliography}
\end{document}